\newtheorem{theorem}{Theorem}[section]
\newtheorem{lemma}[theorem]{Lemma}
\newtheorem{condition}[theorem]{Condition}
\newtheorem{corollary}[theorem]{Corollary}
\theoremstyle{definition}
\newtheorem{example}[theorem]{Example}
\theoremstyle{remark}
\newtheorem{remark}[theorem]{Remark}
\newcommand{\G}{\mathcal{G}}
\newcommand{\N}{\mathcal{N}}
\newcommand{\R}{\mathbb{R}}
\newcommand{\PP}{\mathbb{P}}
\newcommand{\E}{\mathbb{E}}
\newcommand{\intersection}{\cap}
\newcommand{\un}{\textrm{union}}
\newcommand{\M}{\mathcal{M}}
\newcommand{\B}{\mathcal{B}}
\newcommand{\Ev}{\mathcal{E}}
\newcommand{\C}{\mathcal{C}}
\newcommand{\F}{\mathcal{F}}
\DeclareMathOperator*{\argmin}{\arg\!\min}
\DeclareMathOperator*{\argmax}{\arg\!\max}
\DeclareMathOperator*{\pa}{Pa}
\DeclareMathOperator*{\dsep}{\textrm{is\ d-separated\ from}}
 \newcommand\independent{\protect\mathpalette{\protect\independenT}{\perp}}
    \def\independenT#1#2{\mathrel{\rlap{$#1#2$}\mkern2mu{#1#2}}}
\algnewcommand\algorithmicinput{\textbf{Input:}}
\algnewcommand\Input{\item[\algorithmicinput]}
\algnewcommand\algorithmicoutput{\textbf{Output:}}
\algnewcommand\Output{\item[\algorithmicoutput]}
\begin{document}

\begin{frontmatter}
\title{High-Dimensional Joint Estimation \\ of Multiple Directed Gaussian Graphical Models}
\runtitle{Joint Estimation of Multiple DAGs}

\begin{aug}
\author{\fnms{Yuhao} \snm{Wang}\thanksref{t1}\ead[label=e1]{yw505@cam.ac.uk}}

\address{Statistical Laboratory, University of Cambridge, Cambridge, UK \\ \printead{e1}}

\author{\fnms{Santiago} \snm{Segarra}\thanksref{t1}\ead[label=e2]{segarra@rice.edu}}

\address{Department of Electrical and Computer Engineering, Rice University, Houston, TX, USA \\ \printead{e2}}

\author{\fnms{Caroline} \snm{Uhler}\ead[label=e3]{cuhler@mit.edu}}

\address{Laboratory for Information \& Decision Systems and Institute for Data, Systems, and Society\\
Massachusetts Institute of Technology, Cambridge, MA, USA \\ \printead{e3}}

\thankstext{t1}{At the time this research was completed, Yuhao Wang and Santiago Segarra were at the Massachusetts Institute of Technology.}
\runauthor{Y. Wang et al.}

\affiliation{University of Cambridge, Rice University and Massachusetts Institute of Technology}

\end{aug}

\begin{abstract}
We consider the problem of jointly estimating multiple related directed acyclic graph (DAG) models based on high-dimensional data from each graph. This problem is motivated by the task of learning gene regulatory networks based on gene expression data from different tissues, developmental stages or disease states. We prove that under certain regularity conditions, the proposed $\ell_0$-penalized maximum likelihood estimator converges in Frobenius norm to the adjacency matrices consistent with the data-generating distributions and has the correct sparsity. In particular, we show that this joint estimation procedure leads to a faster convergence rate than estimating each DAG model separately. As a corollary, we also obtain high-dimensional consistency results for causal inference from a mix of observational and interventional data. 
For practical purposes, we propose \emph{jointGES} consisting of Greedy Equivalence Search (GES) to estimate the union of all DAG models followed by variable selection using lasso to obtain the different DAGs, and we analyze its consistency guarantees. The proposed method is illustrated through an analysis of simulated data as well as epithelial ovarian cancer gene expression data.
\end{abstract}

\begin{keyword}[class=MSC]
\kwd[Primary ]{62F12}
\kwd[; secondary ]{62F30}
\end{keyword}

\begin{keyword}
\kwd{Causal inference}
\kwd{linear structural equation model}
\kwd{high-dimensional statistics}
\kwd{graphical model}
\end{keyword}
\tableofcontents
\end{frontmatter}

\section{Introduction}




Directed acyclic graph (DAG) models, also known as Bayesian networks, are widely used to model causal relationships in complex systems across various fields such as computational biology, epidemiology, sociology, and environmental management~\cite{aguilera2011bayesian, Friedman_2000, Pearl:00, robins2000marginal, spirtes:00}. In these applications we often encounter \emph{high-dimensional} datasets where the number of variables or nodes greatly exceeds the number of observations. While the problem of structure identification for undirected graphical models in the high-dimensional setting is quite well understood~\cite{Ravikumar_2011,meinshausen2006high,friedman2008sparse,cai2011constrained,yuan2006model}, such results are just starting to become available for directed graphical models. The difficulty in identifying DAG models can be attributed to the fact that searching over the space of DAGs is NP-complete in general~\cite{chickering:95}.

Methods for structure identification in directed graphical models can be divided into two categories and hybrids of these categories. \emph{Constraint-based methods}, such as the prominent PC algorithm, first learn an undirected graph from conditional independence relations and in a second step orient some of the edges~\cite{GSSK87, spirtes:00}. \emph{Score-based methods}, on the other hand, posit a scoring criterion for each DAG model, usually a penalized likelihood score, and then search for the network with the highest score given the observations. An example is the celebrated Greedy Equivalence Search (GES) algorithm, which can be used to greedily optimize the $\ell_0$-penalized likelihood such as the Bayesian Information Criterion (BIC)~\cite{chickering2002optimal}. High-dimensional consistency guarantees were recently obtained for the PC algorithm~\cite{KB07} and for score-based methods~\cite{loh2014high,nandy2015high,vandegeer2013}. 

Existing methods have focused on estimating a single directed graphical model. However, in many applications we have access to data from related classes, such as gene expression data from different tissues, cell types or states~\cite{MacBasSat15, shalek2014single}, different developmental stages~\cite{arbeitman2002gene}, different disease states~\cite{tothill2008novel}, or from different perturbations such as knock-out experiments~\cite{dixit2016perturb}. In all these applications, one would expect that the underlying regulatory networks are similar to each other, since they stem from the same species, individual or cell type, but also have important differences that drive differentiation, development or a certain disease. This raises an important statistical question, namely how to jointly estimate related directed graphical models in order to effectively make use of the available data.

Various methods have been proposed for jointly estimating \emph{undirected} Gaussian graphical models. To preserve the common structure, Guo et al.~\cite{guo2011joint} suggested to use a hierarchical penalty and Danaher et al.~\cite{danaher2014joint} suggested the use of a generalized fused lasso or group lasso penalty. While both approaches achieve the same convergence rate as the individual estimators, Cai et al.~\cite{cai2015joint} were able to improve the asymptotic convergence rate of joint estimation using a weighted constrained $\ell_\infty / \ell_1$ minimization approach. Bayesian methods have been proposed for this problem as well~\cite{peterson2015bayesian}. Related works also include~\cite{mohan2014node}, where it is assumed that the networks differ only locally in a few nodes and~\cite{kolar2010estimating, song2009keller}, where the assumption is that the networks are ordered and related by continuously changing edge weights.

In this paper, we propose a framework based on $\ell_0$-penalized maximum likelihood estimation for jointly estimating related \emph{directed} Gaussian graphical models. We show that the joint $\ell_0$-penalized maximum likelihood estimator (MLE) achieves a faster asymptotic convergence rate as compared to the individual estimators. In addition, by viewing interventional data as data coming from a related network, we show that the interventional BIC scoring function proposed in~\cite{hauser2012characterization} can be obtained as a special case of the joint $\ell_0$-penalized maximum likelihood approach presented here. Our theoretical consistency guarantees also explain the empirical findings of~\cite{hauser2012characterization}, namely that estimating a DAG model from interventional data usually leads to better recovery rates as compared to estimating  a DAG model from the same amount of purely observational data. These theoretical results are based on the global optimum of $\ell_0$-penalized maximum likelihood estimation. To overcome the computational bottleneck of this optimization problem we propose a greedy approach (\emph{jointGES}) for solving this problem by extending GES~\cite{chickering2002optimal} to the joint estimation setting. We analyze its properties from a theoretical point of view and test its performance on synthetic data and gene expression data from epithelial ovarian cancer.

The remainder of this paper is structured as follows. In Section~\ref{sec:bg}, we review some relevant background related to DAG models and introduce notation for the joint DAG estimation problem studied in this paper. In Section~\ref{sec:framework}, we present the joint $\ell_0$-penalized maximum likelihood estimator and jointGES, an adaptation of GES for solving this optimization problem. Section~\ref{sec:theory} establishes results regarding the statistical consistency of the $\ell_0$-penalized MLE and jointGES. Section~\ref{sec:interventions} presents the implications for learning DAG models from a mix of observational and interventional data. In Section~\ref{sec:exp}, we illustrate the performance of our proposal in a simulation study and an application to the analysis of gene expression data. We conclude with a short discussion in Section~\ref{sec:disscusion}. The proofs of supporting results are contained in the Appendix.

\section{Preliminaries} \label{sec:bg}

In Section~\ref{Ss:dags_and_sems} we introduce DAG models, in particular linear structural equation models, and discuss statistical features enjoyed by random vectors following these models. In Section~\ref{sec:obsl0} we briefly review existing approaches for estimating a \emph{single} directed graphical model from observational data. Finally, Section~\ref{Ss:collection_dags} describes a setting where \emph{multiple} related directed graphical models exist.

\subsection{Directed acyclic graphs and linear structural equation models}\label{Ss:dags_and_sems}

Let $\G = (V, E)$ denote a DAG with vertices $V = [p] = \lbrace 1, \cdots, p \rbrace$ and directed edges $E \subseteq V \times V$, where $|\G|$ denotes the cardinality of $E$. Let $A \in \R^{p \times p}$ be the adjacency matrix specifying the edge weights of the underlying DAG $\G$, i.e., $A_{ij} \neq 0$ if and only if $(i,j) \in E$. Also, let $\epsilon \sim \N(0, \Omega)$ denote a $p$-dimensional multivariate Gaussian random variable with zero mean and diagonal covariance matrix $\Omega$. In this work, we assume that the observed random vector $X = (X_1, \cdots, X_p) \in \R^p$ is generated according to the following linear structural equation model (SEM).
\begin{align}\label{eq:SEM_model}
X = A^T X + \epsilon.
\end{align}
Hence $X$ follows a multivariate Gaussian distribution with zero mean and covariance matrix ${\Sigma}$, where the inverse covariance (or \emph{precision}) matrix $\Theta = \Sigma^{-1}$ is given by
\begin{align} \label{eq:precmat}
\Theta = (I - A) \Omega^{-1} (I - A)^T.
\end{align}
Let $\pa_j(\G)$ denote the parents of node $j$ in $\G$;then it follows from (\ref{eq:SEM_model}) that the distribution of $X$ factorizes as
\begin{align*}
\PP(X) = \prod_{j=1}^p \PP(X_j \vert X_{\pa_j(\G)}).
\end{align*}
Such a factorization of $\mathbb{P}$ according to $\mathcal{G}$ is equivalent to 
the \emph{Markov assumption} with respect to $\mathcal{G}$~\cite[Theorem 3.27]{LAU96}. 
Formally, given $j,k \in V$ and an arbitrary subset of nodes $S \subset V \setminus \{j,k\}$, then
\begin{align}\label{eq:def_markov}
j \dsep k \;\vert\;S \;\textrm{ in } \mathcal{G} \quad \Rightarrow \quad X_j \independent X_k \vert X_S \;\textrm{ in } \mathbb{P}.
\end{align}
If the implication \eqref{eq:def_markov} holds bidirectionally, then $\PP$ is said to be \emph{faithful} \cite{spirtes:00} with respect to $\G$. 
%
Note that there exist DAGs $\mathcal{G}_1$ and $\mathcal{G}_2$ that encode the same d-separations and hence the same conditional independence relations. Such DAGs are said to belong to the same \emph{Markov equivalence class}. 
%

A consequence of the acyclicity of $\G$ is that there exists at least one permutation $\pi$ of $[p]$ such that $A_{ij}=0$ for all $\pi(i) \geq \pi(j)$. 
Putting it differently, if the rows and columns of $A$ are reordered according to $\pi$, then the resulting matrix is strictly upper triangular. 
Hence, if such a permutation $\pi$ is known a priori, one can obtain the SEM parameters $(A, \Omega)$ from $\Theta$ according to the following steps [cf.~\eqref{eq:precmat}]. First, we reorder $\Theta$ according to $\pi$. 
Then, we perform on the reordered $\Theta$ an upper-triangular-plus-diagonal Cholesky decomposition to obtain $(A', \Omega')$. 
Finally, we revert the ordering by permuting the rows and columns of $A'$ and $\Omega'$ according to $\pi^{-1}$ and obtain the sought $(A, \Omega)$. 
For an arbitrary permutation $\pi$ and a given $\Theta$, we denote by $(A_\pi, \Omega_\pi)$ the Cholesky decomposition parameters obtained from the procedure just described. 
Alternatively, one can obtain $(A_\pi, \Omega_\pi)$ by solving $p$ linear regressions [cf.~\eqref{eq:SEM_model}]. 
More precisely, we can obtain each column of $A_\pi$ by regressing $X_j$ only on those $X_i$ such that $\pi(i) < \pi(j)$ for all $j$. 
Once $A_\pi$ is obtained, one can estimate the variance of $\epsilon$ in \eqref{eq:SEM_model} to get $\Omega_\pi$.
In the remainder of the paper, we denote by $(A_0, \Omega_0)$ and $\Theta_0$ the true parameters of the data-generating SEM and the associated precision matrix, respectively. Moreover, we denote by $(A_{0\pi}, \Omega_{0\pi})$ the SEM parameters obtained from the described procedure when the true precision matrix $\Theta_0$ is used. Notice that $(A_0, \Omega_0) = (A_{0\pi}, \Omega_{0\pi})$ if $\pi$ is \emph{any} permutation consistent with the true underlying DAG $\G_0$.
The DAG $\G_\pi$ corresponding to the non-zero entries of $A_\pi$ is known as the \emph{minimal} I-MAP (independence map) with respect to $\pi$. 
The minimal I-MAP with the fewest number of edges is called minimal-edge I-MAP~\cite{vandegeer2013}. 
If $\PP$ is faithful with respect to a DAG $\G$, then $\G$ is a minimal-edge I-MAP of $\PP$~\cite{RU13, vandegeer2013}.

Furthermore, it has been shown in~\cite{judea1991equivalence} that all DAGs in a Markov equivalence class share the same \emph{skeleton} -- i.e., the set of edges when directions are ignored -- and \emph{v-structures}. A {v-structure} is a triplet $(j, k, \ell) \subseteq V$ such that $(j, k), (\ell,k) \in E$ but $j$ and $\ell$ are not connected in either direction. This motivates the representation of a Markov equivalence class as a \emph{completely partially directed acyclic graph} (\emph{CPDAG}), which is a graph containing both directed and undirected edges~\cite{CPDAG}. A directed edge means that all DAGs in the Markov equivalence class share the same direction for this edge whereas an undirected edge means that both directions for that specific edge are present within the class.
In the same way, one can represent a subset of a Markov equivalence class via a \emph{partially directed acyclic graph} (\emph{PDAG}), where the directions of the edges are only determined by the graphs within the subset. 
In particular, some undirected edges in a CPDAG would become directed edges in a PDAG representing a subset of the class. 
Notice that both DAGs and CPDAGs are special cases of PDAGs, where the former represents a single graph and the latter represents the whole equivalence class.

To consistently estimate causal DAG models in high dimensions, the $\ell_0$-penalized maximum likelihood estimation approach~\cite{vandegeer2013}, the high-dimensional PC method~\cite{KB07} and the ARGES method~\cite{nandy2015high} have been proposed. These methods have high-dimensional guarantees under different conditions, and are thus not directly comparable. In particular, the theoretical guarantees of $\ell_0$-penalized maximum likelihood estimation requires the so-called ``beta-min'' condition~\cite{vandegeer2013}; the high-dimensional PC algorithm requires the ``strong faithfulness'' condition~\cite{KB07}; and ARGES requires the ``strong faithfulness'' condition as well as additional conditions. 
For further discussions on the strength of different conditions, especially the ``beta-min'' and ``strong faithfulness'' conditions, we refer the readers to Remark~\ref{rk:soa} and \cite[Section~4.3.2]{vandegeer2013}.

\subsection{$\ell_0$-penalized maximum likelihood estimation for a single DAG model} \label{sec:obsl0}

We denote by $\hat{X} \in \R^{n \times p}$ the observed data, where each row of $\hat{X}$ represents a realization of the random vector $X$. We say that we are in the \emph{low-dimensional} setting if asymptotically $p$ remains a constant as $n \to \infty$. By contrast, whenever $p \to \infty$ as $n \to \infty$, we say that we are in the \emph{high-dimensional} setting. Assuming faithfulness, Chickering~\cite{chickering2002optimal} shows that GES outputs a consistent estimator in the low-dimensional setting by optimizing the following objective -- also known as the Bayesian information criterion (BIC) --
\begin{align}\label{eq:l0_max_likelihhod}
(\hat{A}, \hat{\Omega}) := \argmax_{ A \in \mathcal{A}, \, \Omega \in \mathcal{D}_+  } \; \ell_n (\hat{X}; A, \Omega) - \lambda^2 \Vert A \Vert_0,
\end{align}
where $\lambda^2 = \frac{1}{2} \frac{\log n}{n}$, $\mathcal{A}$ denotes the set of all valid adjacency matrices associated with DAGs, $\mathcal{D}_+$ is the set of non-negative diagonal matrices, and $\ell_n$ is the likelihood function
\begin{align} \label{eq:likelihood}
\ell_n (\hat{X}; A, \Omega) \; & := \; - \textrm{trace} \left(\frac{\hat{X}^T \hat{X}}{n} \cdot (I - A) \Omega^{-1} (I - A)^T \right) \nonumber\\
 &\qquad + \log\det \left((I - A) \Omega^{-1} (I - A)^T\right).
\end{align}
In the high-dimensional setting, van de Geer and B{\"u}hlmann \cite{vandegeer2013} give consistency guarantees for the global optimum of~\eqref{eq:l0_max_likelihhod} when the collection $\mathcal{A}$ is further constrained to contain only adjacency matrices with at most $d$ incoming edges for each node, where $d  = \mathcal{O}(n / \log p)$. More precisely, they show that there exists some parameter $\lambda^2 \asymp \frac{\log p}{n}$ such that the optimum $(\hat{A}, \hat{\Omega})$ in \eqref{eq:l0_max_likelihhod} converges in Frobenius norm to $(A_{0\hat{\pi}}, \Omega_{0\hat{\pi}})$ for increasing $n$ and $p$, where $\hat{\pi}$ is a permutation consistent with $\hat{A}$, i.e.,  
\begin{align}\label{eq:error_frobenius_single}
\Vert \hat{A} - A_{0\hat{\pi}} \Vert_F^2 + \Vert \hat{\Omega} - \Omega_{0\hat{\pi}} \Vert_F^2 = \mathcal{O}\left(\lambda^2 \vert \G_0 \vert \right).
\end{align}
Notice, however, that \eqref{eq:error_frobenius_single} does not guarantee statistical consistency since $\hat{\pi}$ need not be a permutation consistent with the true underlying DAG. 
Moreover, \eqref{eq:error_frobenius_single} does not hold for \emph{every} permutation $\hat{\pi}$ consistent with $\hat{A}$, but \cite{vandegeer2013} shows the existence of at least one such permutation.
In addition, it is shown in  \cite{vandegeer2013} that the number of non-zero elements in $\hat{A}$, $A_{0\hat{\pi}}$, and $A_0$ are all of the same order of magnitude, i.e., $|\hat{\G}| \asymp |\G_{0\hat{\pi}}| \asymp |\G_0|$.

\subsection{Collection of DAGs}\label{Ss:collection_dags}

Consider the setting where not all the observed data comes from the same DAG, but rather from a collection of DAGs $\{\G^{(k)} = (V, E^{(k)})\}_{k=1}^K$ that share the same node set $V = [p]$. In addition, we assume that all DAGs in a collection are consistent with some permutation $\pi$. This precludes a scenario where $(i,j) \in E^{(k)}$ and $(j,i) \in E^{(k')}$ for some $k \neq k'$. This is a reasonable assumption in, e.g., the analysis of gene expression data, where regulatory links may appear or disappear, but they in general do not change direction.

Denote by $\{(A^{(k)}, \Omega^{(k)})\}_{k=1}^K$ a set of SEMs on the $K$ DAGs $\{\G^{(k)}\}_{k=1}^K$ and by $\{ \hat{X}^{(k)} \}_{k=1}^K$ the data generated from each SEM, where we observe $n_k$ realizations for each DAG $\G^{(k)}$. In this way, each row of the data matrix $\hat{X}^{(k)}  \in \R^{n_k \times p}$ corresponds to a realization of the random vector $X^{(k)}$ defined as
\begin{align*}
X^{(k)} = {A^{(k)}}^T X^{(k)}   + \epsilon^{(k)} \quad\textrm{with}\quad \epsilon^{(k)} \sim \N(0, \Omega^{(k)}).
\end{align*}

Collections of DAGs arise for example naturally when considering data from \emph{perfect} (also known as \emph{hard}) \emph{interventions}~\cite{EGS05}. Consider a non-intervened DAG $\G$ with SEM parameters $(A, \Omega)$ [cf.~\eqref{eq:SEM_model}]. Then a perfect intervention on a subset of nodes $I_k \subset V$ gives rise to the interventional distribution
%
\begin{align*}
X^{I_k}= {A^{I_k}}^T X^{I_k}   + \epsilon^{I_k} \quad\textrm{with}\quad \epsilon^{I_k} \sim \N(0, \Omega^{I_k}),
\end{align*}
where $A^{I_k}_{ij} = 0$ if $j \in {I_k}$ and $A^{I_k}_{ij} = A_{ij}$ otherwise, and the diagonal matrix $\Omega^{I_k}$ satisfies $\Omega^{I_k}_{ii} = \Omega_{ii}$ if $i \not\in I_k$~\cite{hauser2012characterization,hauser2015jointly}. We denote the DAG given by the non-zero entries of $A^{I_k}$ by $\G^{I_{k}}$.

In accordance with the notation introduced in Section~\ref{Ss:dags_and_sems}, we denote by $\G_0^{(k)}$ and $(A_0^{(k)}, \Omega_0^{(k)})$ the true data-generating DAG and SEM parameters for class $k$, respectively, and by $\pi_0$ a permutation that is consistent with $A_0^{(k)}$ for all classes $k\in[K]$. 
Moreover, we denote by $\G_{0\pi}^{(k)}$ and $(A_{0\pi}^{(k)}, \Omega_{0\pi}^{(k)})$ the DAG and SEM parameters obtained from the Cholesky decomposition of the true precision matrix $\Theta_0^{(k)}$ when permuted by $\pi$. 
We denote by $\Sigma_0^{(k)}$ the true covariance matrix of the SEM for class $k$, i.e., the inverse of ${\Theta_0^{(k)}}$.
Finally, we define $\G_0^{\un}$ as the union of all $\G_0^{(k)}$ -- i.e., an edge appears in $\G_0^{\un}$ if it appears in \emph{any} $\G_0^{(k)}$ -- and $\G_{0\pi}^{\un}$ as the union of $\G_{0\pi}^{(k)}$.
For interventional data, we use $(A_0^{I_k}, \Omega_0^{I_k})$ to denote the true SEM parameters after intervening on targets~$I_k$.

\section{Joint estimation of multiple DAGs} \label{sec:framework}

We first present a penalized maximum likelihood estimator that is the natural extension of \eqref{eq:l0_max_likelihhod} for the case where a collection of DAGs is being estimated. Since this involves minimizing $\left\|\cdot\right\|_0$, we then discuss a greedy approach that alleviates the computational complexity of this estimator.

\subsection{Joint $\ell_0$-penalized maximum likelihood estimator} 
\label{sec:obj}

With $d$ denoting a pre-specified sparsity level and $w_k = n_k/n$ indicating the proportion of observed data from DAG $k$, we propose the following estimator:
\begin{align} \label{obj:jointl0}
\Big\{\hat{\pi}, & \{(\hat{A}^{(k)}, \hat{\Omega}^{(k)})\}_{k=1}^K \Big\} \nonumber\\
 & := \argmax_{\pi, \{(A^{(k)}, \Omega^{(k)})\}_{k=1}^K} \;\; \sum_{k=1}^K w_k \ell_{n_k} (\hat{X}^{(k)}; A^{(k)}, \Omega^{(k)}) - \lambda^2 \bigg\| \sum_{k=1}^K |A^{(k)}| \bigg\|_0 \\
& \qquad\quad\;\textrm{subject to} \qquad A^{(k)} \in \mathcal{A}_{\pi}, \,\,\,  \| A^{(k)} \|_{\infty, 0} \leq d, \,\,\, {\Omega}^{(k)} \in \mathcal{D}_+ \,\,\, \forall k, \nonumber
\end{align}
where $\mathcal{A}_{\pi}$ is the set of all adjacency matrices consistent with permutation $\pi$ and the matrix norm $\| \cdot \|_{\infty, 0}$ computes the maximum $\ell_0$-norm across the rows of the argument matrix. The optimization problem in \eqref{obj:jointl0} seeks to maximize a weighted log-likelihood of the observations (where more weight is given to SEMs with more realizations) penalized by the support of the union of all estimated DAGs. To see why this is true, notice that $\| \sum_{k=1}^K |A^{(k)}|\|_0$ counts the number of $(i,j)$ entries for which $A^{(k)}_{ij} \neq 0$ for at least one graph $k$. This penalization on the union of estimated DAGs promotes overlap in the supports of the different $A^{(k)}$.
Regarding the constraints in \eqref{obj:jointl0}, the first constraint imposes that all estimated DAGs are consistent with the same permutation $\pi$, which is itself an optimization variable. This constraint is in accordance with our assumption in Section~\ref{Ss:collection_dags} and drastically reduces the search space of DAGs. The second constraint ensures that the maximum in-degree in all graphs is at most $d$, and the last constraint imposes the natural requirement that all noise covariances are diagonal and non-negative.

Notice that \eqref{obj:jointl0} is a natural extension of \eqref{eq:l0_max_likelihhod}. Indeed, for the case $K=1$ the objective in \eqref{obj:jointl0} immediately boils down to that in \eqref{eq:l0_max_likelihhod}. Moreover, when there is only one graph and $\pi$ can be selected freely, the constraint $A^{(1)} \in \mathcal{A}_{\pi}$ is effectively identical to $A^{(1)} \in \mathcal{A}$, i.e., the constraint in \eqref{eq:l0_max_likelihhod}. Finally, observe that in \eqref{obj:jointl0} we have included the additional maximum in-degree constraint required in the high-dimensional setting [cf.~discussion after~\eqref{eq:likelihood}].

\subsection{JointGES: Joint greedy equivalence search} \label{sec:jges}

\begin{algorithm}[!t]
	\caption{\;JointGES for joint $\ell_0$-penalized maximum likelihood estimation of multiple DAGs.}\label{alg:jointl0}
	\begin{algorithmic}[1]
		\Input{\ \ \ Collection of observations $\hat{X}^{(1)}\in\mathbb{R}^{{n_1} \times p}, \cdots, \hat{X}^{(K)}\in\mathbb{R}^{{n_K} \times p}$, sparsity bound $d$, penalization parameters $\lambda_1$ and $\lambda_2$}
		\Output{Collection of weighted adjacency matrices $\hat{A}^{(1)}, \cdots, \hat{A}^{(K)}$}
		\State Apply GES to find $\hat{\G}^\un$, an approximate solution to the following optimization problem
		\begin{align}\label{E:alg_step_1}
		\begin{split}
		\argmin_{\G} \;&\; \sum_{j=1}^p \left( \sum_{k=1}^K w_k \left[\min_{a \in \R^{\vert{\pa}_j(\G) \vert}} \log \left( \Vert \hat{X}_j^{(k)} - \hat{X}^{(k)}_{{\pa}_{j}(\G)} \, a \Vert_2^2 \right) \right] + \lambda_1^2 \vert {\pa}_j(\G) \vert \right) \\
		\textrm{subject to} & \quad \max_j \vert {\pa}_j(\G) \vert \leq d 
		\end{split}
		\end{align}
		\State Estimate the weighted adjacency matrices $\{\hat{A}^{(k)}\}_{k=1}^K$ consistent with $\hat{\G}^\un$ by solving $Kp$ sparse regressions of the form
		\begin{align*}
		\begin{split}
		\hat{a}_j^{(k)} & = \argmin_{a \, | \, \textrm{supp}(a) \subseteq \pa_j(\hat{\G}^\un)} \frac{1}{n_k} \Vert \hat{X}_j^{(k)} - \hat{X}^{(k)} a \Vert_2^2 + \lambda^2_2 \Vert a \Vert_1.
		\end{split}
		\end{align*}
	\end{algorithmic}
\end{algorithm}

The $\ell_0$ norm as well as the optimization over all permutations $\pi$ render the problem of~\eqref{obj:jointl0} non-convex, thus, hard to solve efficiently. 
In this section, we present a greedy approach to find a computationally tractable approximation to a solution to \eqref{obj:jointl0}. The algorithm, which we term \emph{JointGES}, is succinctly presented in Algorithm~\ref{alg:jointl0} and consists of two steps.

In the first step of Algorithm~\ref{alg:jointl0} we recover $\hat{\G}^\un$, our estimate of the union of all the DAGs to be inferred.
We do this by finding an approximate solution to \eqref{E:alg_step_1} via the implementation of GES~\cite{chickering2002optimal}. The objective (scoring function) in \eqref{E:alg_step_1} consists of two terms. The first term is given by the sum of the log-likelihoods of the achievable residues when regressing the $j$th column of $X^{(k)}$, denominated as $X_j^{(k)}$, on $X_{\pa_j(\G)}^{(k)}$ for each node $j$ and DAG $k$. 
In \cite{vandegeer2013}, van de Geer and B{\"u}hlmann show that if we keep the underlying DAG $\G$ fixed, the maximum likelihood estimator proposed in~\eqref{eq:l0_max_likelihhod} is equivalent to optimizing $\sum_{j=1}^p \left(\min_{a \in \R^{\vert{\pa}_j(\G) \vert}} \log \left( \Vert \hat{X}_j - \hat{X}_{{\pa}_{j}(\G)} \, a \Vert_2^2 \right) \right)$.
Thus, the first term in~\eqref{E:alg_step_1} corresponds to the first term in the objective of~\eqref{obj:jointl0}.
The second term penalizes the size of the parent set of each node in the graph to be recovered, effectively penalizing the number of edges in the graph.
In this way, the scoring function in \eqref{E:alg_step_1} promotes a sparse $\G$ in the same way that the objective of \eqref{obj:jointl0} promotes the union of all $K$ recovered graphs to have a sparse support. 
Additionally, it is immediate to see that the scoring function in \eqref{E:alg_step_1} is \emph{decomposable}~\cite{chickering2002optimal}, a key feature that enables the implementation of GES to find an approximate solution. 
Once we have obtained the union of all sought DAGs $\hat{\G}^\un$ from step 1, in the second step of our algorithm we estimate the DAGs $\hat{\G}^{(1)}, \cdots, \hat{\G}^{(K)}$ by searching over the subDAGs of $\hat{\G}^\un$. 
More precisely, for each node $j$ we estimate its parents in $\hat{\G}^{(k)}$ by regressing $X_j^{(k)}$ on $X_{{\pa}_j(\hat{\G}^\un)}^{(k)}$ using lasso, where the support of $\hat{a}_j^{(k)}$ corresponds to the set of parents of $j$ in $\hat{\G}^{(k)}$. 

To summarize, Algorithm~\ref{alg:jointl0} recovers $K$ DAGs by first estimating the union of all these DAGs $\hat{\G}^\un$ using GES and then inferring the specific weight adjacency matrices $\hat{A}^{(k)}$ via a lasso regression, while ensuring consistency with the previously estimated~$\hat{\G}^\un$.


\section{Consistency guarantees}\label{sec:theory}

The main goal of this section is to provide theoretical guarantees on the consistency of the solution to Problem~\eqref{obj:jointl0} in the high-dimensional setting.
Our main result is presented in Theorem~\ref{thm:l0}; in Section~\ref{Ss:consistency_milder_conditions} we present a laxer statement of consistency based on milder conditions.

\subsection{Statistical consistency of the joint $\ell_0$-penalized MLE} \label{sec:maintheory}

A series of conditions must hold for our main result to be valid. We begin by stating these conditions followed by the formal consistency result in Theorem~\ref{thm:l0}. The rationale behind these conditions and their implications are discussed after the theorem in Section~\ref{Sss:conditions_for_theorem}.

\begin{condition} \label{cd1}
	All DAGs $\G_0^{(1)}, \cdots, \G_0^{(K)}$ are minimal-edge I-MAPs.
\end{condition}

\begin{condition} \label{cd3}
	There exists a constant $\sigma_0^2$ that bounds the variance of all the observed processes, i.e., $\max_{k,i} [\Sigma_0^{(k)}]_{ii} \leq \sigma_0^2$.
\end{condition}

\begin{condition} \label{cd4}
	The smallest eigenvalues of all $\Sigma_0^{(k)}$ are non-zero, i.e. $\min_k \Lambda_{\min}(\Sigma_0^{(k)}) = {\Lambda_{\min}} > 0$.
\end{condition}

\begin{condition}\label{cd5}
	There exists some constant $\alpha$ such that, for all $k$, the maximum allowable in-degree $d$ in the objective function~\eqref{obj:jointl0} is bounded as $d \leq \alpha n_k / \log p$.
\end{condition}

\begin{condition}\label{cd6}
	For all $\pi$ and $j$ there exist some constants $\tilde{\alpha}$ and $c_s > 2$ such that
	\begin{align*}
	|{\pa}_{j}(\G_{0\pi}^{\un})| + c_s \leq \tilde{\alpha} \left( \min \left\lbrace \left( \frac{n}{K^7 (\log p)^3} \right)^{\frac{1}{3}}, \frac{n}{K^7 (\log n)^2 \log p} \right\rbrace \right).
	\end{align*}
\end{condition}

\begin{condition} \label{cd9}
	The number of DAGs $K$ satisfies $K = o(\log p)$ and the amount of data associated with each DAG is comparable in the sense that $n_1 \asymp n_2 \asymp \cdots \asymp n_K$.
\end{condition}

\begin{condition}\label{cd7}
	There exists some constant $c_t>0$ such that $\vert \G_{0\pi}^{\un} \vert \leq c_t \sum_{k=1}^K w_k \vert \G_{0 \pi}^{(k)} \vert$ for any permutation $\pi$.
\end{condition}

\begin{condition} \label{cd8}
	There exist constants $\eta_0$ and $\eta_1$ such that $0 \leq \eta_1 < 1$, $0 < \eta_0^2 < (1 - \eta_1) / c_t$, and 
	\begin{align}\label{E:noise_level_condition}
	\sum_{i,j} \mathbf{1} \left\{ \left| [A_{0\pi}^{(k)}]_{i,j} \right| > \frac{\sqrt{\log p / n}}{\eta_0} \left(\sqrt{p / \vert \G_0^\un\vert} \vee 1\right) \right\} \geq (1-\eta_1) | \G_{0 \pi}^{(k)}|,
	\end{align}
	for all permutations $\pi$ and graphs $k\in [K]$, where $\mathbf{1}\{ \cdot \}$ denotes the indicator function and $c_t$ is as in Condition \ref{cd7}. 
\end{condition}

With the above conditions in place, the following result can be shown.

\begin{theorem}\label{thm:l0}
	If Conditions~\ref{cd1}-\ref{cd8} hold and $\lambda$ is chosen such that 
	$$\lambda^2 \asymp \frac{\log p}{n} \left( \frac{p}{\vert \G_0^\un \vert} \vee 1 \right),$$
	then there exists a constant \,$c>0$\,, that depends on $c_s$, such that with probability \,$1 - \exp(-cp)$\, the solution to~\eqref{obj:jointl0} satisfies
	\begin{align}\label{E:convergence_main_theorem}
	\sum_{k=1}^K w_k \Vert \hat{A}^{(k)} - A_{0 \hat{\pi}}^{(k)} \Vert_F^2 + 
	\sum_{k=1}^K w_k \Vert \hat{\Omega}^{(k)} - \Omega_{0 \hat{\pi}}^{(k)} \Vert_F^2 = \mathcal{O}\left(\lambda^2 \vert \G_0^\un\vert\right).
	\end{align}
	Furthermore, denoting by $\hat{\G}$ the union of the graphs $\hat{\G}^{(k)}$ associated with the $K$ recovered adjacency matrices $\hat{A}^{(k)}$, we have that
	\begin{align}\label{E:edges_main_theorem}
	\vert \hat{\G} \vert \asymp \vert \G_{0 \hat{\pi}}^\un \vert \asymp \vert \G_0^\un \vert.
	\end{align}
\end{theorem}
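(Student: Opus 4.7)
The plan is to adapt the single-DAG consistency proof of van de Geer and B\"uhlmann to the joint setting, exploiting two structural features: all $K$ DAGs share a common (unknown) permutation $\pi$ and the $\ell_0$-penalty is imposed on the \emph{union} of supports. The skeleton is the familiar ``basic inequality + strong convexity + empirical-process control'' template, but each ingredient must be modified so that the resulting rate scales with $|\G_0^\un|$ rather than $\sum_k |\G_0^{(k)}|$.

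First, I would derive a basic inequality by comparing the global optimizer $(\hat\pi, \{\hat A^{(k)}, \hat\Omega^{(k)}\})$ with the feasible point $(\hat\pi, \{A_{0\hat\pi}^{(k)}, \Omega_{0\hat\pi}^{(k)}\})$, whose feasibility is guaranteed by the in-degree bound in Condition~\ref{cd6}. Using \eqref{eq:likelihood}, I would Taylor-expand $\sum_k w_k \ell_{n_k}$ at the Cholesky-optimal point to split its increment into (i) a deterministic quadratic form that, invoking Condition~\ref{cd4}, lower-bounds the difference by $\Lambda_{\min}\sum_k w_k\bigl(\|\hat A^{(k)}-A_{0\hat\pi}^{(k)}\|_F^2 + \|\hat\Omega^{(k)}-\Omega_{0\hat\pi}^{(k)}\|_F^2\bigr)$, and (ii) an empirical-process remainder consisting of Gaussian quadratic forms in $(\hat\Sigma^{(k)} - \Sigma_{0\hat\pi}^{(k)})$ acting on the error matrices.

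The crux of the argument is a uniform bound on the empirical-process remainder across all admissible permutations $\pi$ and all sparse parent-set configurations. Standard Hanson--Wright/chi-squared concentration gives, for a fixed configuration of support size $s$, a bound of the form $C\sqrt{s(\log p)/n}\cdot\bigl(\sum_k w_k \|\hat A^{(k)}-A_{0\pi}^{(k)}\|_F^2 + \cdots\bigr)^{1/2}$ on a high-probability event; a union bound over the at-most $\binom{p^2}{s}$ supports and over permutations consistent with graphs of in-degree $\leq d$ contributes an entropy factor of order $Kpd\log p$, which Condition~\ref{cd6} keeps small enough to be absorbed into $\lambda^2$, and Condition~\ref{cd9} ensures that the sample sizes $n_k$ are comparable so that the weighted norms behave as a single effective norm. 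Combining (i), (ii), and the basic inequality via a peeling argument yields
\begin{align*}
\sum_k w_k \|\hat A^{(k)} - A_{0\hat\pi}^{(k)}\|_F^2 + \sum_k w_k \|\hat\Omega^{(k)} - \Omega_{0\hat\pi}^{(k)}\|_F^2 \;\leq\; C\lambda^2\Bigl(\bigl\|\textstyle\sum_k|\hat A^{(k)}|\bigr\|_0 + |\G_{0\hat\pi}^\un|\Bigr),
\end{align*}
and Conditions~\ref{cd7}--\ref{cd8} are then used to absorb $\|\sum_k|\hat A^{(k)}|\|_0$ into the right-hand side and to replace $|\G_{0\hat\pi}^\un|$ by $|\G_0^\un|$ up to constants, establishing \eqref{E:convergence_main_theorem}. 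The scaling of $\lambda^2$ with $(p/|\G_0^\un|)\vee 1$ arises precisely to ensure $\lambda^2 |\G_0^\un|\gtrsim p(\log p)/n$, the unavoidable cost of estimating $\Theta_0^{(k)}$ with $p$ diagonal entries. The sparsity claim \eqref{E:edges_main_theorem} follows from the basic inequality once \eqref{E:convergence_main_theorem} is in hand: the Frobenius error cannot be small unless $|\hat\G|$ and $|\G_{0\hat\pi}^\un|$ are of comparable order, while the minimum-signal bound \eqref{E:noise_level_condition} prevents true edges from being dropped and Condition~\ref{cd7} relates $|\G_{0\hat\pi}^\un|$ to $|\G_0^\un|$.

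I expect two steps to be the main obstacles. The first is the uniform control of the empirical process over \emph{both} the unknown permutation $\hat\pi$ and the unknown support of the union graph while achieving the \emph{joint} rate $\lambda^2|\G_0^\un|$ rather than $\lambda^2\sum_k|\G_0^{(k)}|$; this is where Condition~\ref{cd7} (equating the union to the weighted sum of individual supports) and the union penalty in \eqref{obj:jointl0} are indispensable. The second is the sparsity statement \eqref{E:edges_main_theorem}: lower-bounding $|\hat\G|$ requires that the minimum-signal condition~\ref{cd8} is calibrated against the chosen $\lambda$, since edges of magnitude below $\sqrt{\log p/n}/\eta_0$ are indistinguishable from zero and could otherwise inflate the error without being counted in $|\hat\G|$.
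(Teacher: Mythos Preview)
Your overall architecture is the paper's: basic inequality from optimality, control of the cross terms via high-probability events uniform over permutations and supports, quadratic lower bound on the likelihood excess, and then use of Conditions~\ref{cd7}--\ref{cd8} to close the argument. The empirical-process control and the role of the scaling of $\lambda^2$ are identified correctly.

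There is, however, a genuine gap in how you handle $|\hat\G|$. Your displayed intermediate inequality places $\bigl\|\sum_k|\hat A^{(k)}|\bigr\|_0=|\hat\G|$ on the \emph{right-hand side} and you then propose to ``absorb'' it using Conditions~\ref{cd7}--\ref{cd8}. That is backwards. Those conditions yield only a \emph{lower} bound on $|\hat\G|$: the beta-min condition together with the Frobenius error bound forces most large true edges to survive in $\hat\G$, giving $|\hat\G|\ge c\,|\G_{0\hat\pi}^\un|$. They give no upper bound on $|\hat\G|$, so a right-hand side containing $C\lambda^2|\hat\G|$ is uncontrolled. The missing piece is that the basic inequality, when you compare with the true point $(\pi_0,\{A_0^{(k)},\Omega_0^{(k)}\})$ (same likelihood as $(\hat\pi,\{A_{0\hat\pi}^{(k)},\Omega_{0\hat\pi}^{(k)}\})$ but penalty $|\G_0^\un|$ rather than $|\G_{0\hat\pi}^\un|$), contributes $+\lambda^2|\hat\G|$ to the \emph{left-hand side}. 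After the empirical-process terms are bounded by $(\lambda_1^2/\delta_1+\lambda_2^2/\delta_2)(|\hat\G|+|\G_{0\hat\pi}^\un|)$ with $\lambda_1^2,\lambda_2^2\asymp (\log p)/n$, the choice of $\lambda$ leaves a residual $+\lambda^2\delta_s|\hat\G|$ on the left, so the working inequality is
\[
\delta_B\sum_k w_k\|\hat A^{(k)}-A_{0\hat\pi}^{(k)}\|_F^2+\delta_W\sum_k w_k\|\hat\Omega^{(k)}-\Omega_{0\hat\pi}^{(k)}\|_F^2+\lambda^2\delta_s|\hat\G|\ \le\ \lambda^2|\G_0^\un|+\lambda_0^2|\G_{0\hat\pi}^\un|.
\]
Now the logic runs in the opposite direction from what you wrote: Conditions~\ref{cd7}--\ref{cd8} give $|\G_{0\hat\pi}^\un|\le c'\,|\hat\G|$, so the $\lambda_0^2|\G_{0\hat\pi}^\un|$ on the right is absorbed by the $\lambda^2\delta_s|\hat\G|$ on the left (for $\lambda$ taken large enough within its prescribed order), leaving $\lambda^2|\G_0^\un|$ alone on the right. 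Only then do you read off both \eqref{E:convergence_main_theorem} and the upper bound $|\hat\G|\lesssim|\G_0^\un|$; the lower bound $|\hat\G|\gtrsim|\G_{0\hat\pi}^\un|\gtrsim|\G_0^\un|$ is exactly the beta-min step, and together they give \eqref{E:edges_main_theorem}. No peeling is needed.

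A secondary point: Taylor-expanding the Gaussian log-likelihood at $(A_{0\hat\pi}^{(k)},\Omega_{0\hat\pi}^{(k)})$ does not directly give a global quadratic lower bound because of the $\log\det$ term; the paper instead works node-wise with the identity $\hat\omega_j^{(k)}=\|\hat X^{(k)}(\hat a_j^{(k)}-a_{0j\hat\pi}^{(k)})\|_2^2/n_k-2\hat\epsilon_{j\hat\pi}^{(k)T}\hat X^{(k)}(\hat a_j^{(k)}-a_{0j\hat\pi}^{(k)})/n_k+\|\hat\epsilon_{j\hat\pi}^{(k)}\|_2^2/n_k$ and the one-sided bound $\log(1+x)\le x-x^2/(2(1+t)^2)$, valid once an event guaranteeing $\hat\omega_j^{(k)}\in[1/\beta^2,\beta^2]$ is established. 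Your Taylor route would need an analogous a priori two-sided control of $\hat\omega_j^{(k)}$ before the quadratic form is valid.
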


The proof of Theorem~\ref{thm:l0} is given in Appendix~\ref{sec:l0proof}. 
To intuitively grasp the result in the above theorem, assume that the number of edges in $\G_0^\textrm{union}$ is proportional to the number of nodes $p$ so that $\lambda^2 \vert \G_0^\un \vert \to 0$ for increasing $n$ as long as $n > p\log p$. 
Hence, under these conditions, \eqref{E:convergence_main_theorem} guarantees that for the recovered permutation $\hat{\pi}$, the estimated adjacency matrix $\hat{A}^{(k)}$ converges to $A_{0 \hat{\pi}}^{(k)}$ in Frobenius norm for all $k$. 
This not only implies that both adjacency matrices have similar structure, but also that the edge weights are similar. Moreover, from \eqref{E:edges_main_theorem} it follows that the number of edges in the estimated graph $\hat{\G}$, i.e., $| \hat{\G} |$ is similar to the number of edges in the union of all minimal I-MAPs with permutation $\hat{\pi}$, i.e., $| \G_{0 \hat{\pi}}^\un |$. More importantly, $| \hat{\G} |$ is also similar to the number of edges in the true union graph $\vert \G_0^\un \vert$.
Despite these guarantees, it should be noted that similar to the results in~\cite{vandegeer2013}, the permutation $\hat{\pi}$ need not coincide with the permutation $\pi$ of the true graphs to be recovered.

We now assess the benefits of performing joint estimation of the $K$ DAGs as opposed to estimating them separately. 
To do so, we compare the guarantees in Theorem~\ref{thm:l0} to those developed in~\cite{vandegeer2013} for separate estimation.
The application of the consistency bound reviewed in \eqref{eq:error_frobenius_single} yields that for the separate estimation of $K$ DAGs, when we are in the setting where all $K$ DAGs are highly overlapping (cf.~Condition~\ref{cd7}), by choosing $\lambda$ such that 
$\lambda^2 \asymp \frac{\log p}{n} \left( \frac{p}{\vert \G_0^\un \vert} \vee 1 \right)$, one can guarantee that
\begin{align}\label{E:guarantee_separate_estimation}
\sum_{k=1}^K w_k  \Vert \hat{A}^{(k)} - A_{0 \hat{\pi}^{(k)}}^{(k)} \Vert_F + \sum_{k=1}^K w_k \Vert \hat{\Omega}^{(k)} - \Omega_{0 \hat{\pi}^{(k)}}^{(k)} \Vert_F^2  = \mathcal{O}\left(K \lambda^2 \max_{k \in [K]} \vert \G_0^{(k)} \vert \right),
\end{align}
where it should be noted that in the separate estimation the recovered permutation $\hat{\pi}^{(k)}$ can vary with $k$.
A direct comparison of \eqref{E:convergence_main_theorem} and \eqref{E:guarantee_separate_estimation} reveals that performing joint estimation improves the accuracy by a factor of $K$ from $\Omega(K \frac{\log p}{n})$ to $\Omega(\frac{\log p}{n})$. Hence, for joint estimation the accuracy scales with the total number of samples $n$, showing that our procedure yields maximal gain from each observation, even if the data is generated from $K$ different DAGs. 
Moreover, the result in \eqref{E:convergence_main_theorem} holds under slightly milder conditions than those needed for \eqref{E:guarantee_separate_estimation} to hold since Condition~\ref{cd8} is a relaxed version of the \emph{beta-min condition} in~\cite{vandegeer2013}. A more detailed discussion about the conditions of Theorem~\ref{thm:l0} is given next.

\subsection{Conditions for Theorem~\ref{thm:l0}}\label{Sss:conditions_for_theorem}

It has been shown in~\cite{RU13} that if a data-generating distribution is faithful with respect to $\G$, then $\G$ must be a minimal-edge I-MAP. 
By enforcing the latter for every true graph, Condition~\ref{cd1} imposes a milder requirement compared to the well-established faithfulness assumption~\cite{spirtes:00}.
Conditions~\ref{cd3}-\ref{cd5} ensure that we avoid overfitting and provide bounds for the noise variances. These are direct adaptations from Conditions~3.1-3.3 in ~\cite{vandegeer2013}.
Condition~\ref{cd6} is required to bound the difference between the sample variances of our observations and the true variances, and is related to Condition~3.4 in~\cite{vandegeer2013} but adapted to our joint inference setting. Notice that Condition~\ref{cd6} is trivially satisfied when $p = \mathcal{O}\left( \frac{n^{1/3}}{K^{7/3} \log n} \right)$.
Condition~\ref{cd9} follows from the bounds for sample variances shown in~\cite{cai2015joint}. Intuitively, we are imposing the natural restriction that the number of DAGs is small compared to the number of nodes $p$ in each DAG and the total number of observations $n$. Moreover, given that our objective is to draw estimation power from the joint inference of multiple graphs, we require that each DAG is associated with a non-vanishing fraction of the total observations.

Condition~\ref{cd7} enforces that, for every permutation $\pi$, the number of edges in the union of all recovered graphs is proportional to the weighted sum of the edges in every graph as $K \to \infty$. In particular, this requires the individual graphs $\G_{0 \pi}^{(k)}$ to be highly overlapping. To see why this is the case, notice that $\sum_{k=1}^K w_k \vert \G_{0 \pi}^{(k)} \vert$ is upper bounded by the maximum number of edges across graphs $\G_{0 \pi}^{(k)}$. Consequently, Condition~\ref{cd7} enforces the number of edges in the union of graphs to be proportional to the number of edges in the single graph with most edges, thus requiring a high level of overlap. Imposing high overlap for all permutations $\pi$ might seem too restrictive in some settings.
Nonetheless, Condition~\ref{cd7} can sometimes be derived from apparently less restrictive conditions as the following example illustrates.

Consider the more relaxed bound $\vert \G_0^\un \vert \leq c_t \sum_{k=1}^K w_k \vert \G_0^{(k)} \vert$, which is equivalent to requiring Condition~\ref{cd7} to hold but \emph{only} for permutations consistent with the true graph $\G_0^\un$. 
In the following example, we show that this might be sufficient for Condition~\ref{cd7} to hold. 
Suppose that $\G_0^\un$ consists of two connected components ${\G'}^\un_0$ and ${\G''}^\un_0$ respectively defined on the subsets of nodes $V_1$ and $V_2$. Moreover, assume that the subDAGs of $\G_0^{(k)}$ over $V_1$ (denoted by ${\G'}^{(k)}_0$) are identical for all $k$. Putting it differently, the differences between the DAGs $\G_0^{(k)}$ are limited to the second connected component. 
In addition, assume that for all possible permutations $\pi_2$ of nodes $V_2$ we have that $\vert {\G''}^\un_{0\pi_2} \vert \leq \vert {\G'}^\un_{0} \vert$. Then, for any permutation $\pi$, where we denote by $\pi_1$ (respectively $\pi_2$) the restriction of $\pi$ to the node set $V_1$ (respectively $V_2$), we have
\begin{align*}
\vert \G_{0\pi}^\un \vert = \vert {\G'}^\un_{0\pi_1} \vert + \vert {\G''}^\un_{0\pi_2} \vert
 \leq \sum_{k=1}^K w_k \vert {\G'}^{(k)}_{0 \pi_1}\vert + \sum_{k=1}^K w_k \vert {\G'}^{(k)}_0\vert \leq 2 \sum_{k=1}^K w_k \vert \G_{0\pi}^{(k)}\vert,
\end{align*}
which shows that Condition~\ref{cd7} is satisfied for $c_t=2$. This example shows that learning the structure of large components that are common across the different DAGs is not affected by the changes in the smaller components of these DAGs. Beyond this example, in Section~\ref{Ss:cd7}, we also provide simulation results to study the strength of Condition~\ref{cd7} for sparse DAG models. Our simulation analysis shows that, when the $\G_0^{(k)}$'s are highly overlapping (recall that this corresponds to a more relaxed scenario than Condition~\ref{cd7} that requires high overlap across $\G_{0\pi}^{(k)}$'s for all $\pi$'s), Condition~\ref{cd7} is naturally satisfied with a reasonably small $c_t$.
Despite the above example as well as the empirical analysis, Condition~\ref{cd7} might still be too restrictive for some applications; we discuss a relaxed requirement and its implications on the consistency guarantees in Section~\ref{Ss:consistency_milder_conditions}. 

Condition~\ref{cd8} requires that, for every permutation $\pi$ and every graph $k$, the value of at least a fixed proportion $(1-\eta_1)$ of the edges in $\G_{0 \pi}^{(k)}$ is above the \emph{`noise level'}, i.e., the lower bound within the indicator function in \eqref{E:noise_level_condition}. 
Intuitively, if the true weight of many edges is close to zero then correct inference of the graphs would be impossible since the true edges would be mistaken with spurious ones. 
Thus, it is expected that the weights of a sufficiently large fraction of the edges have to be sufficiently large. 
Condition~\ref{cd8} is the right formalization of this intuition. 
Moreover, notice that a straightforward replication of the beta-min condition introduced in \cite{vandegeer2013} would have required the \emph{`noise level'} to scale with $\sqrt{\log p / n_k}$, instead of the smaller scaling of $\sqrt{\log p / n}$ required in \eqref{E:noise_level_condition}. 
In this sense, Condition~\ref{cd8} (together with Condition~\ref{cd1}) is a relaxed version of the extension of the beta-min condition to the setting of joint graph estimation.


\begin{remark}[Strength of assumptions]\label{rk:soa}
Requiring strong assumptions for consistent estimation is a common theme in existing methods for causal inference. 
For example, the PC algorithm requires the strong faithfulness assumption~\cite{KB07}, which has been shown to be a very restrictive assumption for high-dimensional causal graphical models~\cite{URBY13}. For a discussion on the comparison between the strong faithfulness assumption and the beta-min condition for estimating a single DAG model, see~\cite[Section~4.3.2]{vandegeer2013}.
In this context, the assumptions presented here are in line with or slight relaxations (Conditions~\ref{cd1} and~\ref{cd8}) of those in state-of-the-art approaches. While it would be interesting in future work to formally compare Conditions~\ref{cd1} and~\ref{cd8} to strong faithfulness, our goal here is not to relax existing assumptions for the estimation of DAG models, but to show that \emph{joint} estimation can result in faster rates than \emph{separate} estimation of multiple DAGs under comparable assumptions.
\end{remark}

\subsection{Consistency under milder conditions}\label{Ss:consistency_milder_conditions}

As previously discussed, in some settings Condition~\ref{cd7} might be too restrictive.
Hence, in this section we present a consistency statement akin to Theorem~\ref{thm:l0} that holds for a milder version of Condition~\ref{cd7}: 

\vspace{0.2cm}
\noindent {\bf Condition~4.7'.} \emph{Let $c_t(\pi)$ be some function of $\pi$ that scales as a constant for permutations consistent with $\G_0^\un$ and scales as $o(K)$ for all other permutations such that $\vert \G_{0 \pi}^{\un} \vert \leq c_t(\pi) \sum_{k=1}^K w_k \vert \G_{0 \pi}^{(k)} \vert$ for all $\pi$.}
\vspace{0.2cm}

Observe that for permutations $\pi$ consistent with the true union graph $\G_0^\un$, Condition~\ref{cd7}' boils down to the previously discussed Condition~\ref{cd7}. However, for all other permutations, $c_t(\pi)$ need not be a constant and is allowed to grow with $K$. Intuitively, for all permutations \emph{not} consistent with $\G_0^\un$ we are \emph{not} requiring a high level of overlap among all the graphs $\G_{0 \pi}^{(k)}$. Nonetheless, since $c_t(\pi) = o(K)$ we \emph{do} require $\G_{0 \pi}^{\un}$ to be \emph{`sparser'} than the extreme case in which all graphs $\G_{0 \pi}^{(k)}$ are disjoint.

In order to account for the fact that $c_t$ depends on the permutation $\pi$ in Condition~\ref{cd7}', we have to modify Condition~\ref{cd8} accordingly, resulting in the following alternative statement.

\vspace{0.2cm}
\noindent {\bf Condition~4.8'.} \emph{Let $C_{\max} := \underset{\pi}{\max}\; c_t(\pi)$, then there exist constants $\eta_0$ and $\eta_1$ such that $0 \leq \eta_1 < 1$, $0 < \eta_0^2 < (1 - \eta_1)$, and 
	\begin{align*}
	\sum_{i,j} \mathbf{1} \left\{ \left| [A_{0\pi}^{(k)}]_{i,j} \right| > \frac{\sqrt{ C_{\max} \log p / n}}{\eta_0} \left( \sqrt{p / \vert \G_0^\un\vert} \vee 1 \right) \right\} \geq (1 - \eta_1)\vert \G_{0 \pi}^{(k)} \vert,
	\end{align*}
	for all permutations $\pi$ and graphs $k$, where $\mathbf{1}\{ \cdot \}$ denotes the indicator function.}
\vspace{0.2cm}

The following consistency result holds for the alternative set of conditions.

\begin{theorem} \label{thm:relax}
	Under Conditions~\ref{cd1}-\ref{cd9}, \ref{cd7}' and \ref{cd8}' and with $\lambda$ such that
	$\lambda^2 \asymp C_{\max} \frac{\log p}{n} \left( \frac{p}{\vert \G_0^\un \vert} \vee 1 \right)$,
	then there exists a constant \,$c>0$\, that depends on $c_s$ such that with probability \,$1 - \exp(-cp)$,\, the solution to~\eqref{obj:jointl0} satisfies that, at least for one $k\in[K]$,
	\begin{align}\label{E:convergence_main_theorem_relaxed}
	\Vert \hat{A}^{(k)} - A_{0 \hat{\pi}}^{(k)} \Vert_F^2 + \Vert \hat{\Omega}^{(k)} - \Omega_{0 \hat{\pi}}^{(k)} \Vert_F^2 = \mathcal{O}\left( \lambda^2 \vert \G_0^{(k)} \vert\right).
	\end{align}
	Furthermore, denoting by $\hat{\G}^{(k)}$ the graph associated with $\hat{A}^{(k)}$ for the $k\in[K]$ satisfying \eqref{E:convergence_main_theorem_relaxed}, we have that 
	\begin{align}\label{E:edges_main_theorem_relaxed}
	\vert \hat{\G}^{(k)} \vert \asymp \vert \G_{0 \hat{\pi}}^{(k)} \vert \asymp \vert \G_0^{(k)} \vert.
	\end{align}
\end{theorem}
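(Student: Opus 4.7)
The plan is to adapt the proof of Theorem~\ref{thm:l0} to accommodate the relaxed Conditions~4.7' and 4.8', and then extract the per-graph guarantee via a pigeonhole argument on the weighted Frobenius error. The original proof decomposes into (i) a deterministic oracle inequality obtained from the optimality of $(\hat\pi, \{(\hat{A}^{(k)}, \hat{\Omega}^{(k)})\})$ in \eqref{obj:jointl0} compared against the feasible point given by the true permutation $\pi_0$ and parameters $\{(A_{0\pi_0}^{(k)}, \Omega_{0\pi_0}^{(k)})\}$, (ii) concentration bounds controlling the empirical-likelihood fluctuations uniformly over the feasible set of adjacency matrices and permutations, and (iii) an application of Conditions~\ref{cd7} and~\ref{cd8} to convert the oracle inequality into a Frobenius bound. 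Steps (i) and (ii) carry over essentially unchanged; the only modification is that the concentration bounds must now dominate noise at the inflated scale $\sqrt{C_{\max}\log p/n}$, which is precisely the scale embedded in the new $\lambda$.

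In step (iii), Condition~4.8' replaces Condition~\ref{cd8}. The only place where $c_t$ enters the original proof is through the beta-min threshold, and the threshold in Condition~4.8' is inflated by exactly the factor $\sqrt{C_{\max}}$ needed to absorb the permutation-dependent constant $c_t(\pi)$ from Condition~4.7'. With these modifications, the same chain of inequalities used in Theorem~\ref{thm:l0} yields the weighted bound
\begin{align*}
\sum_{k=1}^K w_k \bigl\Vert\hat{A}^{(k)} - A_{0\hat\pi}^{(k)}\bigr\Vert_F^2 + \sum_{k=1}^K w_k \bigl\Vert\hat{\Omega}^{(k)} - \Omega_{0\hat\pi}^{(k)}\bigr\Vert_F^2 = \mathcal{O}\bigl(\lambda^2 \vert\G_0^\un\vert\bigr).
\end{align*}
Since $\pi_0$ is consistent with $\G_0^\un$, Condition~4.7' applied at $\pi = \pi_0$ gives $c_t(\pi_0) = \mathcal{O}(1)$, whence $\vert\G_0^\un\vert = \mathcal{O}\bigl(\sum_k w_k \vert\G_0^{(k)}\vert\bigr)$.

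To pass from a weighted bound to a per-graph bound, I would invoke a weighted ratio pigeonhole. Setting $r_k := \bigl(\Vert\hat{A}^{(k)} - A_{0\hat\pi}^{(k)}\Vert_F^2 + \Vert\hat{\Omega}^{(k)} - \Omega_{0\hat\pi}^{(k)}\Vert_F^2\bigr)/\vert\G_0^{(k)}\vert$, the minimum is bounded by the weighted average $\min_k r_k \leq \bigl(\sum_k w_k \vert\G_0^{(k)}\vert r_k\bigr)/\bigl(\sum_k w_k \vert\G_0^{(k)}\vert\bigr)$, whose numerator equals the left-hand side of the displayed weighted bound. Combining with $\vert\G_0^\un\vert = \mathcal{O}(\sum_k w_k \vert\G_0^{(k)}\vert)$ gives $\min_k r_k = \mathcal{O}(\lambda^2)$, which is exactly \eqref{E:convergence_main_theorem_relaxed} at $k^* := \argmin_k r_k$.

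For the sparsity claim \eqref{E:edges_main_theorem_relaxed} at the same index $k^*$, I would combine the per-graph Frobenius bound with Condition~4.8'. The lower bound on the magnitudes of at least a $(1-\eta_1)$-fraction of entries of $A_{0\hat\pi}^{(k^*)}$ prevents the Frobenius error from being consumed by many missed edges, forcing the support of $\hat{A}^{(k^*)}$ to agree with that of $A_{0\hat\pi}^{(k^*)}$ on most of the latter and hence $\vert\hat{\G}^{(k^*)}\vert \gtrsim \vert\G_{0\hat\pi}^{(k^*)}\vert$; the $\ell_0$ term in \eqref{obj:jointl0}, together with the same oracle inequality, delivers the matching upper bound $\vert\hat{\G}^{(k^*)}\vert \lesssim \vert\G_{0\hat\pi}^{(k^*)}\vert \asymp \vert\G_0^{(k^*)}\vert$. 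The main obstacle is step (ii): the concentration bounds must hold uniformly over all permutations, and one must verify that the single choice $\lambda^2 \asymp C_{\max}(\log p/n)(p/\vert\G_0^\un\vert \vee 1)$ simultaneously dominates the fluctuation terms for every $\pi$---in particular those with $c_t(\pi) = o(K)$ rather than $\mathcal{O}(1)$---which is where the $C_{\max}$ inflation is paid for once and for all, and explains why the resulting guarantee reduces from a full weighted bound to a single-index statement.
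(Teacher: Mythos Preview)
Your proposal has a genuine gap at the step where you assert the weighted Frobenius bound under the relaxed conditions. The chain of inequalities in the proof of Theorem~\ref{thm:l0} that produces that bound passes through Lemma~\ref{lm:betamin}, whose conclusion $|\hat{\G}| \geq \frac{1-\eta_1 - \eta_2^2 c_t}{c_t}\,|\G_{0\hat\pi}^\un|$ is only useful when $\eta_2^2 c_t < 1 - \eta_1$. Under Condition~4.7' the relevant constant is $c_t(\hat\pi)$, which can be as large as $C_{\max} = o(K)$. The $\sqrt{C_{\max}}$ inflation of the threshold in Condition~4.8' does \emph{not} help here: both the $\tilde\lambda$ appearing in Lemma~\ref{lm:betamin} and the threshold in Condition~4.8' scale with $\sqrt{C_{\max}}$, so their ratio still gives $\eta_2 \asymp \eta_0$, and the requirement becomes $\eta_0^2\, C_{\max} < 1 - \eta_1$---which Condition~4.8' does not supply (it only gives $\eta_0^2 < 1 - \eta_1$). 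So your claim that ``the only place where $c_t$ enters the original proof is through the beta-min threshold'' is wrong: $c_t$ also enters the feasibility constraint on $\eta_2$ in Lemma~\ref{lm:betamin}, and that obstruction is not removed by the inflated threshold.

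The paper avoids this by reversing the order of your last two steps. Starting from the basic inequality~\eqref{eq:bound}, it uses $|\hat{\G}| \geq \sum_k w_k |\hat{\G}^{(k)}|$ and Condition~4.7' to write both sides as $\sum_k w_k(\cdots)$, and then pigeonholes \emph{on the full oracle inequality}---retaining the $\lambda^2 \delta_s |\hat{\G}^{(k)}|$ term---to obtain, for some $k$,
\[
\delta_B \|\hat{A}^{(k)} - A_{0\hat\pi}^{(k)}\|_F^2 + \delta_W \|\hat\Omega^{(k)} - \Omega_{0\hat\pi}^{(k)}\|_F^2 + \lambda'^2 \delta_s' |\hat{\G}^{(k)}| \;\leq\; \lambda'^2 |\G_0^{(k)}| + \lambda_0'^2 |\G_{0\hat\pi}^{(k)}|.
\]
Only \emph{after} this reduction does it apply a beta-min argument, now the single-graph Lemma~\ref{lm:oribetamin}, which carries no $c_t$ factor and needs only $\eta_2^2 < 1 - \eta_1$, matching Condition~4.8' exactly. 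This ordering is essential and also repairs a second gap in your plan: because the $|\hat{\G}^{(k)}|$ term survives the pigeonhole, the upper bound $|\hat{\G}^{(k)}| \lesssim |\G_0^{(k)}|$ follows directly from the displayed inequality. Your ratio pigeonhole on $r_k$ discards that term, and your appeal to ``the $\ell_0$ term in~\eqref{obj:jointl0}'' only controls $|\hat{\G}|$, not the individual $|\hat{\G}^{(k^*)}|$.
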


The proof is given in Appendix~\ref{sec:relaxproof}. Condition~\ref{cd7}' is milder than Condition~\ref{cd7} and this relaxation entails a corresponding loss in the guarantees of recovery: Comparing \eqref{E:convergence_main_theorem_relaxed} and \eqref{E:edges_main_theorem_relaxed} with \eqref{E:convergence_main_theorem} and \eqref{E:edges_main_theorem} immediately reveals that what could be guaranteed for the ensemble of graphs in Theorem~\ref{thm:l0} can only be guaranteed for a single graph in Theorem~\ref{thm:relax}, thereby explaining the trade-off in relaxing the conditions. 

However, the result in Theorem~\ref{thm:relax} still draws inference power from the joint estimation of multiple graphs since neither \eqref{E:convergence_main_theorem_relaxed} nor \eqref{E:edges_main_theorem_relaxed} can be shown using existing results for separate estimation. 
To be more precise, as discussed in Section~\ref{Sss:conditions_for_theorem}, when performing separate estimation, theoretical guarantees are based on the assumption that at least a fixed proportion of the edge weights are above the \emph{`noise level'}, which scales as $\sqrt{\log p / n_k}$. 
However, Condition~\ref{cd8}' requires the noise level to scale with $\sqrt{C_{\max} \log p /n}$ which, given the fact that $C_{\max} = o(K)$, is not large enough to achieve the guarantee needed for separate estimation.
In addition, the convergence rate of $\Omega(C_{\max} \frac{\log p}{n})$ in \eqref{E:convergence_main_theorem_relaxed} is still faster than the corresponding convergence rate of $\Omega(K \frac{\log p}{n})$ associated with separate estimation [cf. discussion after~\eqref{E:guarantee_separate_estimation}]. 
A potential limitation of Theorem~\ref{thm:relax} is that, since one cannot know which of the $K$ DAGs achieves such $\sqrt{C_{\max} \log p /n}$ rate and which remains at $\sqrt{\log p / n_k}$, the result may be of limited utility for practitioners. 
However, note that the much weaker Condition 4.7' helps to illustrate that, even in such scenario, joint estimation can be helpful compared with separate estimation. 
In addition, it also clarifies which guarantees are lost with respect to the more stringent scenario when Condition 4.7 holds. In this sense, although not fully interpretable, this intermediate case provides an idea of how the guarantees degrade as we start to soften the assumptions.

We end this section with the following remark discussing the consistency guarantees of jointGES.

\begin{remark}[Consistency of jointGES]
In the low-dimensional setting, by choosing $\lambda_1^2 = \sum_{k=1}^K w_k \frac{\log n_k}{2n_k}$, assuming faithfulness and assuming that GES finds the global optimum of~\eqref{E:alg_step_1}, it can be inferred from~\cite{chickering2002optimal} that, in the limit of large data, the first step in Algorithm~\ref{alg:jointl0} is guaranteed to produce a Markov equivalence class (MEC) $\hat{\M}$ that is within the following set of MECs:
\begin{align*}
\M^* := \big\{\M: \;\textrm{there exists a}\; \pi \in \Pi \;\textrm{such that}\; \M = \M(\G_{0\pi}^\un)\big\}
\end{align*}
where
\begin{align*}
\Pi := \big\{\pi : \forall k \in [K], \G_{0\pi}^{(k)} \in \M(\G_0^{(k)})\big\}.
\end{align*}
This allows us to recover $\{\M(\G_0^{(k)})\}_{k=1}^K$ by successively considering all DAGs in $\hat{\M}$ as inputs to the second step of Algorithm~\ref{alg:jointl0}, and selecting the DAG $\hat{\G}^\un \in \hat{\M}$ whose output $\{\hat{\G}^{(k)}\}_{k=1}^K$  from step 2 is the sparsest. Then the MECs $\{\M(\hat{\G}^{(k)})\}_{k=1}^K$ produced from step 2 asymptotically coincide with $\{\M(\G_0^{(k)})\}_{k=1}^K$. Note that no matter which MEC is chosen from the set $\M^*$, by doing edge reductions in step~2, the final result is always guaranteed to asymptotically converge to $\{\M(\G_0^{(k)})\}_{k=1}^K$. In Example~\ref{ex:jointges}, we show how Algorithm~\ref{alg:jointl0} works for a particular 3-node instance. In the high-dimensional setting, where even the global optimum of~\eqref{obj:jointl0} is not guaranteed to recover the true $\G_0^\un$ (cf. Theorems~\ref{thm:l0} and~\ref{thm:relax}), jointGES is in general not consistent.
Recently, Maathuis et al.~\cite{nandy2015high} showed consistency of GES for single DAG estimation in the high-dimensional setting under more restrictive assumptions than the ones considered here.
Although of potential interest, further strengthening the presented conditions to guarantee consistency of jointGES also in the high-dimensional setting is not pursued in the current paper.
\end{remark}

\begin{example}\label{ex:jointges}
We present an example to illustrate how the output from Algorithm~\ref{alg:jointl0} works in the low-dimensional regime. Consider the setting where we have data collected from two causal DAG models on 3 nodes, namely $1 \rightarrow 2$ and $2 \leftarrow 3$. 
In the first step, Algorithm~\ref{alg:jointl0} will produce either the PDAG $1 \rightarrow 2 \leftarrow 3$ or $1 - 2 - 3$. 
Then, no matter which of the two PDAGs is learned in the first step, by taking it to Step~2, it is guaranteed to asymptotically converge to the desired output $1 \to 2$ (or $1 \leftarrow 2$) as well as $2 \to 3$ (or $2 \leftarrow 3$), of which the MECs are $1 - 2$ and $2 - 3$, respectively.
\end{example}

\section{Extension to interventions}\label{sec:interventions}

In this section, we show how our proposed method for joint estimation can be extended to learn DAGs from interventional data. 
It is natural to consider learning from interventional data as a special case of joint estimation since the DAGs associated with interventions are different but closely related.
In this section, we mimic some of the developments of Sections~\ref{sec:framework} and \ref{sec:theory} but specialized for the case of interventional data. More precisely, we first propose an optimization problem akin to \eqref{obj:jointl0} and then state the consistency guarantees in the high-dimensional setting of the associated optimal solution.

Recall from Section~\ref{Ss:collection_dags} that the true adjacency matrix $A_0^{I_k}$ of the SEM associated with an intervention on the nodes $I_k$ is identical to the true adjacency matrix $A_0$ of the non-intervened model except that $[A_0^{I_k}]_{ij} = 0$ for all $j \in I_k$. In this way, our assumption that there exists a common permutation $\pi$ consistent with all DAGs under consideration (cf. Section~\ref{Ss:collection_dags}) is automatically satisfied for interventional data. Additionally, assuming that we observe samples $\hat{X}^{I_k}$ from $K$ different models corresponding to the respective intervention on the nodes in $\{ I_k\}_{k=1}^K$, the knowledge of the intervened nodes can be incorporated into our optimization problem as follows [cf.~\eqref{obj:jointl0}].
\begin{subequations}\label{obj:jointl0_intervened}
	\begin{align} 
	\Big\{\hat{\pi}, & \hat{A}, \hat{\Omega}, \{(\hat{A}^{I_k}, \hat{\Omega}^{I_k}) \}_{k=1}^K \Big\} \nonumber \\
	& = \argmax_{\pi, A, \Omega, \{(A^{I_k}, \Omega^{I_k})\}_{k=1}^K} \;\; && \sum_{k=1}^K w_k \ell_{n_k} (\hat{X}^{I_k}; A^{I_k}, \Omega^{I_k}) - \lambda^2 \left\| A \right\|_0 \label{obj:jointl0_intervened_a} \\
	& \qquad\quad\;\; \textrm{subject to}  && A \in \mathcal{A}_{\pi}, \,\,\,\,  \| A \|_{\infty, 0} \leq d, \,\,\,\, {\Omega} \in \mathcal{D}_+, \label{obj:jointl0_intervened_b} \\ 
	& && A^{I_k}_{ij} = A_{ij} \,\,\, \forall j \not\in I_k, \,\,\,\, A^{I_k}_{ij} = 0 \,\,\, \forall j \in I_k, \label{obj:jointl0_intervened_c} \\
	& && \Omega^{I_k}_{jj} = \Omega_{jj} \,\,\, \forall j \not\in I_k, \,\,\,\,  \Omega^{I_k} \in \mathcal{D}_+. \label{obj:jointl0_intervened_d}
	\end{align}
\end{subequations}
From the solution of \eqref{obj:jointl0_intervened} we obtain an estimate for the non-intervened SEM $(\hat{A}, \hat{\Omega})$ as well as $K$ estimates for the corresponding intervened models $(\hat{A}^{I_k}, \hat{\Omega}^{I_k})$. 
The objective in \eqref{obj:jointl0_intervened_a} is equivalent to that in \eqref{obj:jointl0} where we leverage the fact that the union of all intervened graphs results in the non-intervened one under the implicit assumption that no single node has been intervened in every experiment. Alternatively, if some nodes were intervened in all experiments, objective \eqref{obj:jointl0_intervened_a} would still be valid since enforcing zeros in the unobservable portions of $A$ does not affect the recovery of the intervened adjacency matrices $A^{I_k}$.
The constraints in \eqref{obj:jointl0_intervened_b} impose that $A$ has to be consistent with permutation $\pi$ and with bounded in-degree, and $\Omega$ has to be a valid covariance matrix for uncorrelated noise. Putting it differently, \eqref{obj:jointl0_intervened_b} enforces for the non-intervened SEM what we impose separately for all SEMs in \eqref{obj:jointl0}. The constraints in \eqref{obj:jointl0_intervened_c} impose the known relations between the intervened and the non-intervened adjacency matrices. Finally, \eqref{obj:jointl0_intervened_d} constrains the matrices $\Omega^{I_k}$ to be consistent with the base model on the non-intervened nodes while still being a valid covariance on the intervened ones. 

Even though it might seem that in \eqref{obj:jointl0_intervened} we are estimating $K+1$ SEMs (the base case plus the $K$ intervened ones), from the previous reasoning it follows that the effective number of optimization variables is significantly smaller. To be more specific, for a given $\pi$, once $A$ is fixed then all the adjacency matrices $A^{I_k}$ are completely determined. Moreover, for a fixed $\Omega$, the only freedom in $\Omega^{I_k}$ corresponds to the diagonal entries associated with intervened nodes in $I_k$. In this way, it is expected that for a given number of samples, the joint estimation of $K$ SEMs obtained from interventional data [cf.~\eqref{obj:jointl0_intervened}] outperforms the corresponding estimation from purely observational data [cf.~\eqref{obj:jointl0}]. 

Recalling that we denote by $(A^{I_k}_{0 \hat{\pi}}, \Omega_{0 \hat{\pi}}^{I_k})$ the parameters recovered from the Cholesky decomposition of the true precision matrix $\Theta_0^{I_k}$ under the assumption of consistency with permutation $\hat{\pi}$, the following result holds.

\begin{corollary} \label{cor:intl0}
	If Conditions~\ref{cd1}-\ref{cd8} hold and $\lambda$ is chosen as $\lambda^2 \asymp \frac{\log p}{n} \left( \frac{p}{\vert \G_0 \vert} \vee 1 \right)$, then there exist constants \,$c_1, c_2>0$\, such that with probability \,$1 - c_1 \exp(-c_2p)$,\, the solution to~\eqref{obj:jointl0_intervened} satisfies
	\begin{align}\label{E:cor_1}
	\sum_{k=1}^K w_k \Vert \hat{A}^{I_k} - A^{I_k}_{0 \hat{\pi}} \Vert_F^2 = \mathcal{O}(\lambda^2 \vert \G_0 \vert).
	\end{align}
	Furthermore, denoting by $\hat{\G}$ the graph associated with the recovered adjacency matrix $\hat{A}$ for the non-intervened model, we have that
	\begin{align}\label{E:cor_2}
	\vert \hat{\G} \vert \asymp \vert \G_{0 \hat{\pi}} \vert \asymp \vert \G_0 \vert.
	\end{align}
\end{corollary}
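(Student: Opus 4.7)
}
The plan is to cast the interventional estimator of \eqref{obj:jointl0_intervened} as a special (constrained) instance of the joint $\ell_0$-penalized MLE of \eqref{obj:jointl0} and then invoke Theorem~\ref{thm:l0}. First, I would identify the $K$ classes with the $K$ interventional DAGs $\{\G_0^{I_k}\}_{k=1}^K$, noting that from the definition $A_{ij}^{I_k} = A_{ij}$ for $j \notin I_k$ and $A_{ij}^{I_k}=0$ otherwise, under the implicit assumption that no node is intervened in \emph{every} experiment, the union of these DAGs coincides with the non-intervened DAG $\G_0$. In particular, $\G_0^{\un} = \G_0$, which immediately makes the penalty $\lambda^2 \|A\|_0$ in \eqref{obj:jointl0_intervened_a} equal to $\lambda^2 \|\sum_k |A^{I_k}|\|_0$ of~\eqref{obj:jointl0}, and makes the constraints \eqref{obj:jointl0_intervened_b}--\eqref{obj:jointl0_intervened_d} a strict restriction of the feasible set of~\eqref{obj:jointl0}. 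Since the true interventional parameters satisfy all of these linkage constraints, feasibility is retained and the argument that yielded \eqref{E:convergence_main_theorem} goes through.

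Next, I would verify Conditions~\ref{cd1}--\ref{cd8} in this specialized setting. Condition~\ref{cd1} transfers because a minimal-edge I-MAP property for $\G_0$ implies, by the construction of $A_0^{I_k}$, that each $\G_0^{I_k}$ is a minimal-edge I-MAP of the corresponding interventional distribution (removing parents of intervened nodes cannot introduce new conditional dependencies). Conditions~\ref{cd3}--\ref{cd5} on variances, eigenvalues and in-degree carry over because interventions only prune parents of $I_k$ and keep the noise variances on non-intervened nodes unchanged; the in-degree of any $\G_0^{I_k}$ is bounded by that of $\G_0$. Conditions~\ref{cd6} and~\ref{cd9} involve only $K$, $p$, $n$ and the size of the union graph, which equals $|\G_0|$ here. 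The overlap Condition~\ref{cd7} is automatic: since $|\G_0^{I_k}| \geq |\G_0| - |\{(i,j): j \in I_k\}|$ and $\G_0^{\un}=\G_0$, the weighted sum $\sum_k w_k |\G_0^{I_k}|$ is a constant fraction of $|\G_0^\un|$ as long as the intervened nodes do not exhaust all parents across all experiments. The beta-min Condition~\ref{cd8} transfers because every nonzero entry of $A_{0\pi}^{I_k}$ equals the corresponding entry of the non-intervened $A_{0\pi}$, so the fraction of above-noise-level coefficients is preserved.

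With the conditions verified, applying Theorem~\ref{thm:l0} gives
\[
\sum_{k=1}^K w_k \|\hat{A}^{I_k}-A_{0\hat{\pi}}^{I_k}\|_F^2 + \sum_{k=1}^K w_k \|\hat{\Omega}^{I_k}-\Omega_{0\hat{\pi}}^{I_k}\|_F^2 = \mathcal{O}(\lambda^2 |\G_0^\un|) = \mathcal{O}(\lambda^2 |\G_0|),
\]
which yields \eqref{E:cor_1}. Similarly, the edge-count statement \eqref{E:edges_main_theorem} translates to $|\hat{\G}^\un|\asymp|\G_{0\hat{\pi}}^\un|\asymp|\G_0^\un|$, and since by \eqref{obj:jointl0_intervened_c} the graph $\hat{\G}$ of the recovered non-intervened $\hat A$ coincides with the union of the graphs of $\hat A^{I_k}$ (no node being intervened in every experiment), we obtain \eqref{E:cor_2}.

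\paragraph{Main obstacle.} The delicate point is justifying that the additional hard-coupling constraints in \eqref{obj:jointl0_intervened_c}--\eqref{obj:jointl0_intervened_d} do not invalidate the proof of Theorem~\ref{thm:l0}: the theorem's argument compares the optimal objective to that evaluated at the \emph{true} parameters, and the coupling constraints only shrink the feasible set while keeping the truth feasible, so the upper bound on the error still holds. The cleanest way to make this rigorous is to observe that the constrained maximizer can be viewed as the maximizer of~\eqref{obj:jointl0} over the subclass of collections $\{(A^{(k)},\Omega^{(k)})\}$ that are consistent with some $(A,\Omega)$ via the interventional relations, so every step in the consistency proof (bounding the log-likelihood difference, bounding the penalty difference, handling the permutation) applies verbatim with $\G_0^\un$ replaced by $\G_0$ throughout. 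The one thing to double-check is the edge-asymptotic relation \eqref{E:cor_2}, which requires noting that $\hat{\G}$ in the statement is the \emph{support} of $\hat A$, which by the interventional constraints coincides with $\bigcup_k \mathrm{supp}(\hat A^{I_k})$, and hence plays exactly the role of $\hat\G^\un$ in Theorem~\ref{thm:l0}.
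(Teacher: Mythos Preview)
Your high-level idea---identify $\G_0^{\un}$ with the non-intervened graph $\G_0$ and view \eqref{obj:jointl0_intervened} as \eqref{obj:jointl0} restricted to a smaller feasible set containing the truth---is correct and is exactly how the paper sets things up. However, the claim that ``every step in the consistency proof applies verbatim'' skips over a genuine technical obstruction, and this is where the paper's proof differs from yours.

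The proof of Theorem~\ref{thm:l0} (specifically Lemma~\ref{lem:bounds}) expands the weighted log-likelihood at the optimum using the identity $\hat{\omega}_j^{(k)} = \|\hat{X}_j^{(k)} - \hat{X}^{(k)}\hat{a}_j^{(k)}\|_2^2/n_k$, i.e., each estimated noise variance equals the \emph{per-class residual variance}. This identity is what yields the crucial decomposition into the cross term $\hat{\epsilon}_{j\hat\pi}^{(k)T}\hat{X}^{(k)}(\hat a_j^{(k)}-a_{0j\hat\pi}^{(k)})$ and the quadratic term that are then controlled via events $\Ev_1$ and $\Ev_2$. Under the interventional constraints \eqref{obj:jointl0_intervened_c}--\eqref{obj:jointl0_intervened_d}, the optimizer $\hat{\omega}_j^{I_k}$ for $j\notin I_k$ is the \emph{pooled} variance $\sum_{k':\,j\notin I_{k'}}\tfrac{n_{k'}}{n_{-j}}\|\hat{X}_j^{(k')}-\hat{X}^{(k')}\hat a_j\|_2^2/n_{k'}$, not the per-class residual variance, so this expansion step does not go through verbatim. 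The paper closes this gap by invoking Lemma~\ref{lem:cor1} (the interventional profile-likelihood form due to Hauser--B\"uhlmann) and then applying the concavity of $\log$ (Jensen's inequality) to lower-bound $\tfrac{n_{-j}}{n}\log\bigl(\sum_{k:j\notin I_k}\tfrac{n_k}{n_{-j}}\hat\omega_j^{(k)}\bigr)$ by $\sum_{k:j\notin I_k} w_k\log\hat\omega_j^{(k)}$, where the $\hat\omega_j^{(k)}$ are \emph{redefined} to be the per-class residual variances. After this one additional inequality, the display matching \eqref{eq:bounds1} is recovered and the remainder of the proof of Theorem~\ref{thm:l0} does apply. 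This is also why \eqref{E:cor_1} is stated only for $\hat A^{I_k}$ and not for $\hat\Omega^{I_k}$: the $\Omega$-convergence would pertain to the redefined per-class residual variances, not to the actual constrained estimates $\hat\Omega^{I_k}$.

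Two secondary remarks. First, your effort to ``verify'' Conditions~\ref{cd1}--\ref{cd8} is unnecessary, since the corollary \emph{assumes} them for the collection $\{\G_0^{I_k}\}$; moreover, your argument for Condition~\ref{cd7} only treats permutations consistent with $\G_0$, whereas the condition is required for all $\pi$. Second, your observation that $\hat\G=\bigcup_k\mathrm{supp}(\hat A^{I_k})$ under the no-common-intervention assumption is exactly what is needed to read off \eqref{E:cor_2} from \eqref{E:edges_main_theorem}, and matches the paper.
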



\begin{figure}[b]
	\centering
	\subfigure[]{\includegraphics[scale=1]{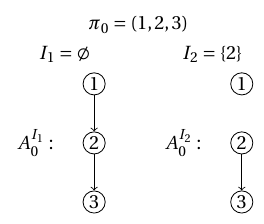}}\qquad \qquad
	\subfigure[]{\includegraphics[scale=1]{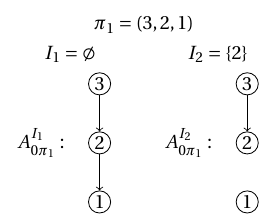}}
	\vspace{-0.2cm}
	\caption{Interventional data can avoid the recovery of spurious permutations. (a) True DAGs to be recovered. (b) DAGs obtained from the Cholesky decomposition consistent with $\pi_1$. The spurious permutation $\pi_1$ does not satisfy \eqref{E:cor_1} for cases where node 2 is intervened.}
	\label{fig:intdag}
	\vspace{-0.3cm}
\end{figure}

The proof is given in Appendix~\ref{sec:corproof}. A quick comparison of Theorem~\ref{thm:l0} and Corollary~\ref{cor:intl0} seems to indicate that the consistency guarantees of observational and interventional data are very similar.
However, recovery from interventional data is strictly better as we argue next.
As discussed after Theorem~\ref{thm:l0}, the results presented do not guarantee that the permutation $\hat{\pi}$ recovered coincides with the true permutation of the nodes. 
In principle, one could recover a spurious permutation $\hat{\pi}$ (different from the true permutation $\pi$) that correctly explains the observed data [cf.~\eqref{E:convergence_main_theorem} and \eqref{E:cor_1}] and leads to sparse graphs [cf.~\eqref{E:edges_main_theorem} and \eqref{E:cor_2}].
However, the more interventions we have, the smaller the set of spurious permutations $\hat{\pi}$ that can be recovered, as we illustrate in the following example.
Figure~\ref{fig:intdag} portrays the existence of a spurious permutation that could be recovered from observational data but cannot be recovered from interventional data.
More precisely, Figure~\ref{fig:intdag}(a) presents the two true DAGs that we aim to recover, where the second one is obtained by intervening on node 2. By contrast, Figure~\ref{fig:intdag}(b) shows the DAGs that are obtained when performing Cholesky decompositions on the true precision matrices under the spurious permutation $\pi_1$. Notice that the sparsity levels of the DAGs in both figures are the same. 
In general, one could recover $\pi_1$ instead of $\pi_0$ from observational data, but one would never recover $\pi_1$ from interventional data. To see this, simply notice from the figure that $[A^{I_2}_{0 \pi_1}]_{32} \neq 0$ whereas for the interventional estimate $[\hat{A}^{I_2}]_{32} = 0$ [cf.~\eqref{obj:jointl0_intervened_c}], thus, the error terms in \eqref{E:cor_1} cannot vanish for $\hat{\pi} = \pi_1$.
This example also indicates that it is preferable to intervene on multiple targets in the same experiment instead of doing interventions one at a time. This observation is in accordance with new genetic perturbation techniques, such as Perturb-seq~\cite{dixit2016perturb}. 

From a practical perspective, the objective in \eqref{obj:jointl0_intervened} corresponds to the same scoring function as GIES~\cite{hauser2012characterization}. Therefore, GIES can be used to obtain an approximate solution to~\eqref{obj:jointl0_intervened}. A simulation study using GIES was performed in~\cite[Section 5.2]{hauser2012characterization} showing that in line with the theoretical results obtained in this section, not only identifiability increases, but also the estimates obtained using interventional data are better than with the same amount of purely observational data.

\section{Experiments}
\label{sec:exp}

In this section, we present numerical experiments on both synthetic (Section~\ref{sec:exp_joint}) and real (Section~\ref{Ss:cancer_data}) data that support our theoretical findings. We also provide an empirical analysis to study the strength of Condition~\ref{cd7} for sparse DAG models in Section~\ref{Ss:cd7}.

\begin{figure}[b!]
	\centering
	\subfigure[Average SHD, $n = 600$]{\includegraphics[width=0.3\textwidth]{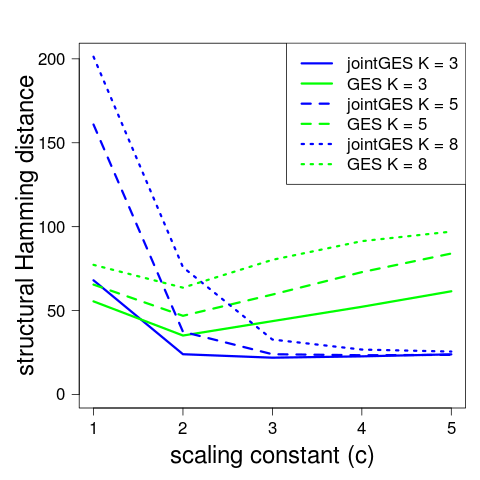}} 
	\subfigure[Average SHD, $n = 900$]{\includegraphics[width=0.3\textwidth]{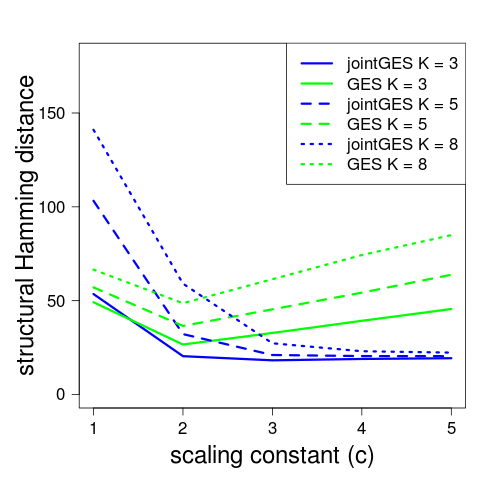}}
	\subfigure[Average SHD, $n = 1200$]{\includegraphics[width=0.3\textwidth]{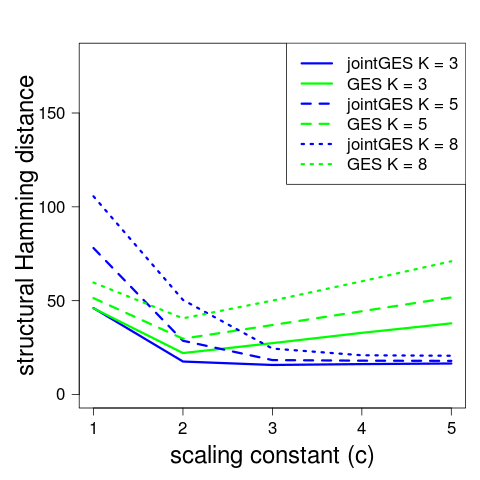}} \\
	\subfigure[Average ROC, $n = 600$]{\includegraphics[width=0.3\textwidth]{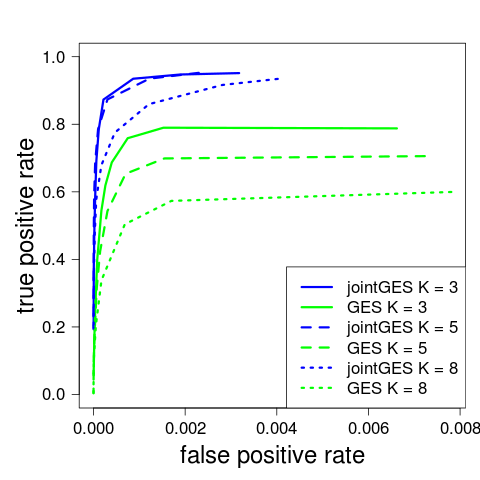}}
	\subfigure[Average ROC, $n = 900$]{\includegraphics[width=0.3\textwidth]{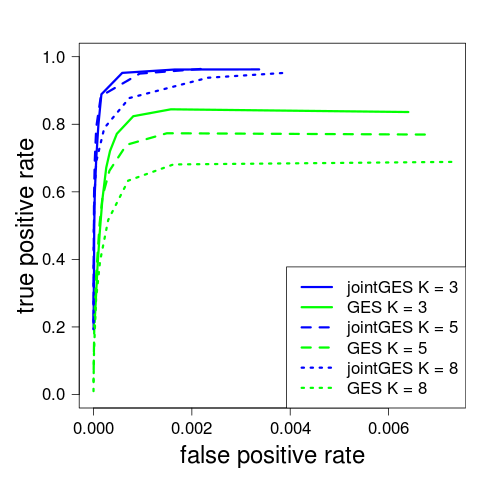}}
	\subfigure[Average ROC, $n = 1200$]{\includegraphics[width=0.3\textwidth]{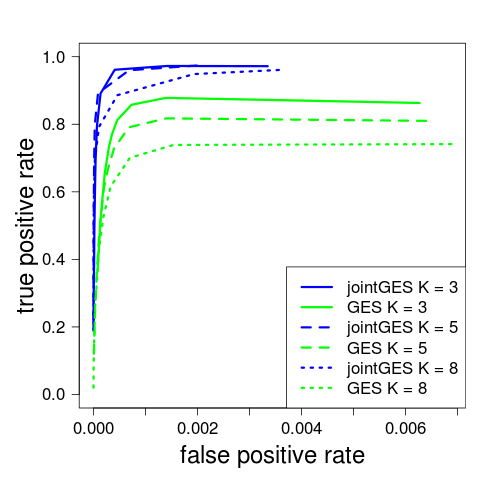}}
	\vspace{-0.2cm}
	\caption{Simulation results when we set the private-to-core edge ratio to $0.3$. (a) - (c) Average SHD as a function of the scaling constant $c$ for joint and separate GES with $n = 600, 900, 1200$ respectively; (d) - (f) Average ROC curve obtained by varying the tuning parameters with $n = 600, 900, 1200$. JointGES consistently achieves a better performance across all settings.
	}
	\label{fig:r3}
	\vspace{-0.2cm}
\end{figure}

\subsection{Performance evaluation of joint causal inference} \label{sec:exp_joint}

We analyze the performance of the joint recovery of $K$ different DAGs where we vary $K \in \{ 3, 5, 8\}$ and $n \in \{600, 900, 1200\}$. For all experiments, we set the number of nodes $p=100$. 
In addition, we selected the number of samples from each DAG to be the same, i.e., $n_1 = \ldots = n_K = n/K$.
For each experiment, the true DAGs were constructed in two steps.
First, we generated a \emph{core} graph that is shared among the $K$ DAGs under consideration.
We did this by generating a random graph from an Erd\H{o}s-R\'enyi model with $100$ edges in expectation, and then oriented the edges according to a random permutation of the nodes.
Then we sampled $e_{\textrm{private}} \in \{30, 60\}$ additional \emph{private} edges uniformly at random. Each such edge was assigned uniformly at random to one of the $K$ DAGs, thereby keeping the total number of private edges across all $K$ DAGs to be $e_{\textrm{private}}$.
This procedure results in the generation of a collection of true underlying DAGs $\G_0^{(1)}, \dots , \G_0^{(K)}$ with a private-to-core edge ratio of $0.3$ and $0.6$ respectively. 
Associated with each DAG, we generated a true adjacency matrix $A^{(k)}_0$ and a true diagonal error covariance matrix $\Omega^{(k)}_0$. 
For the latter, we drew each error variance independently and uniformly from the interval $[1, 2.25]$.
Regarding the adjacency matrices, we drew the edge weights independently and uniformly from $[-1, -0.1] \cup [0.1, 1]$ to ensure that they are bounded away from zero.
Notice that we did not put any constraints on the edge weights that are in the shared core structure for different DAGs: the same edge can change its weight in different DAGs, or even flip sign.
	
We randomly generated $100$ collections of DAGs and data associated with them. 
We then estimated the DAGs from the data via two different methods: a joint estimation procedure using jointGES presented in Algorithm~\ref{alg:jointl0} and a separate estimation procedure using the well-established GES method~\cite{chickering2002optimal}. 

\begin{figure}[t!]
	\centering
	\subfigure[Average SHD, $n = 600$]{\includegraphics[width=0.3\textwidth]{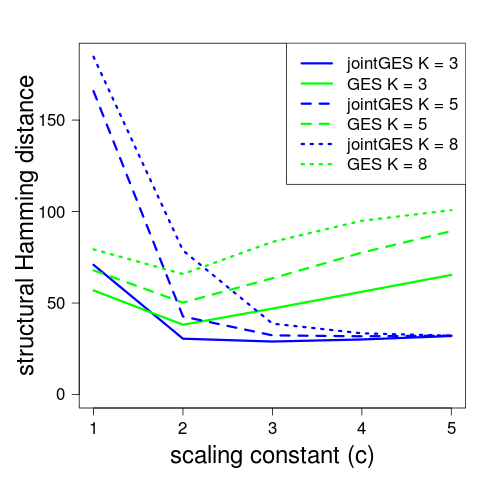}} 
	\subfigure[Average SHD, $n = 900$]{\includegraphics[width=0.3\textwidth]{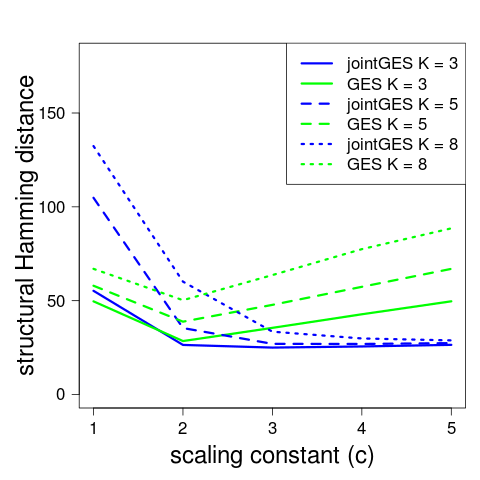}}
	\subfigure[Average SHD, $n = 1200$]{\includegraphics[width=0.3\textwidth]{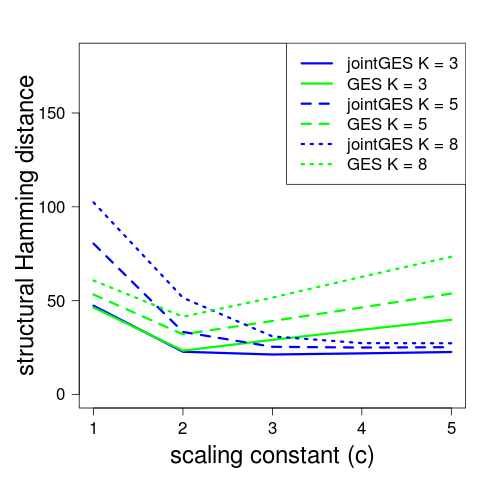}} \\
	\subfigure[Average ROC, $n = 600$]{\includegraphics[width=0.3\textwidth]{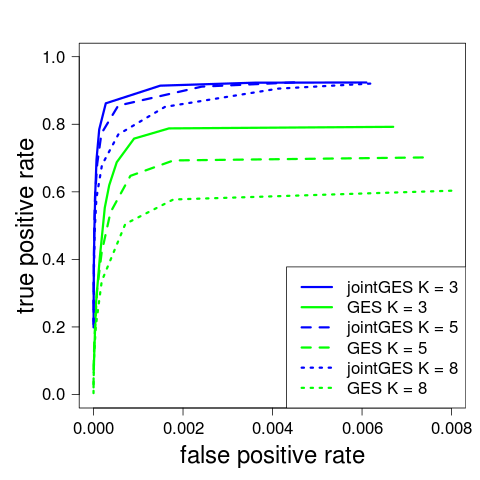}}
	\subfigure[Average ROC, $n = 900$]{\includegraphics[width=0.3\textwidth]{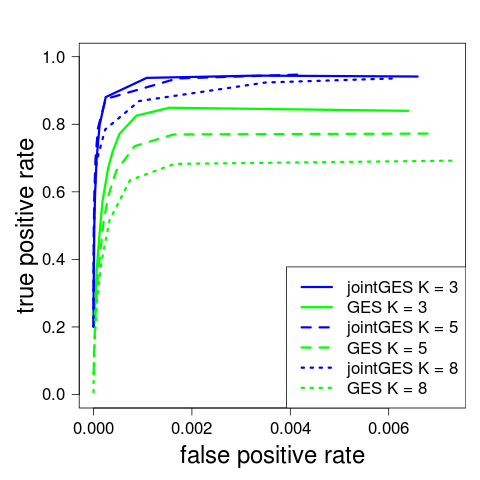}}
	\subfigure[Average ROC, $n = 1200$]{\includegraphics[width=0.3\textwidth]{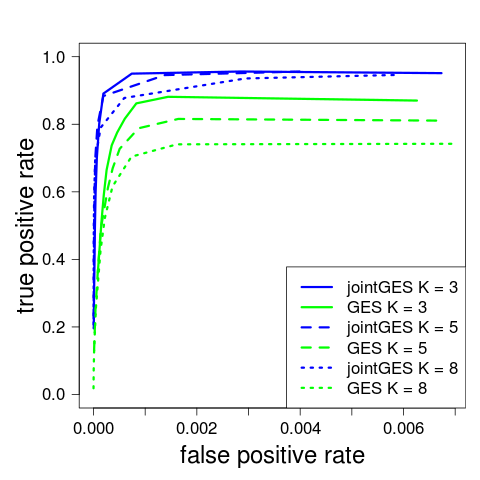}}
	\vspace{-0.2cm}
	\caption{Simulation results when we set the private-to-core edge ratio to $0.6$. (a) - (c) Average SHD as a function of the scaling constant $c$ for joint and separate GES with $n = 600, 900, 1200$ respectively; (d) - (f) Average ROC curve obtained by varying the tuning parameters with $n = 600, 900, 1200$. JointGES consistently achieves a better performance across all settings.
	}
	\label{fig:r6}
	\vspace{-0.2cm}
\end{figure}
	
To assess performance of the two algorithms, we considered two standard measures, namely the structural Hamming distance (SHD)~\cite{tsamardinos2006max} and the receiver operating characteristic (ROC) curve.
SHD is a commonly used metric based on the number of operations needed to transform the estimated DAG into the true one~\cite{KB07, tsamardinos2006max}. 
Hence, a smaller SHD value indicates better performance. 
The ROC curve plots the true positive rate against the false positive rate for different choices of tuning parameters.
The results are shown in Figures~\ref{fig:r3} and~\ref{fig:r6}. Notice that for plotting SHD, we selected the $\ell_0$-penalization parameter $\lambda_1^2 = c \, \frac{\log p}{n}$ with \emph{scaling constant} $c \in \lbrace 1, 2, 3, 4, 5\rbrace$ in both joint and separate estimation and then plotted average SHD as a function of the scaling constant $c$ averaged over the $K$ DAGs to be recovered and the $100$ realizations. The penalization parameter $\lambda_2$ in the second step of the joint estimation procedure was chosen based on $10$-fold cross validation. We plotted the average ROC curve where for each choice of tuning parameter, we computed the true positive and false positive rates by averaging over the $K$ DAGs to be recovered and the $100$ realizations. It is clear from the two figures that in general joint inference achieves better performance, which matches our theoretical results in Section~\ref{sec:theory}.

However, Figures~\ref{fig:r3} (a)-(c) and~\ref{fig:r6} (a)-(c) show also that jointGES performs worse than separate estimation for small scaling constants ($c = 1$). Note that this is in line with our theoretical findings in Theorem~\ref{thm:relax}, which imply that whenever Condition~\ref{cd7} -- which sometimes is a restrictive assumption -- is violated, we need to choose a larger penalization parameter.
	


\subsection{Simulation analysis of Condition~\ref{cd7}} \label{Ss:cd7}

\begin{figure}[b!] 
	\centering
	\subfigure[private-to-core edge ratio $0.3$]{\includegraphics[scale=0.31]{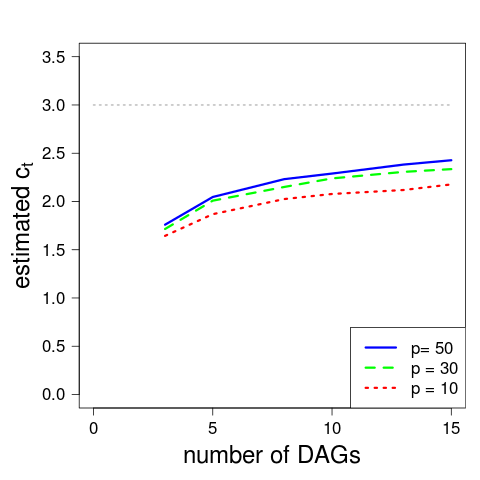}} \qquad\qquad
	\subfigure[private-to-core edge ratio $0.6$]{\includegraphics[scale=0.31]{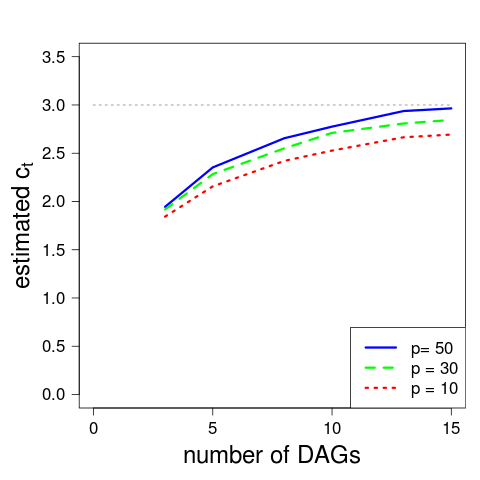}}
	\caption{Averaged $c_t$ across $100$ realizations. (a) is the curve when the private-to-core edge ratio is chosen as $0.3$; (b) corresponds to the curve with private-to-core edge ratio of $0.6$.} \label{fig:cd7}
\end{figure}

In this section we provide simulation results to empirically study the strength of Condition~\ref{cd7}. We follow the same procedure as in Section~\ref{sec:exp_joint} to generate a collection of DAGs, except that we set $p \in \{ 10, 30, 50\}$ and  $K \in \{ 3, 5, 8, 10, 13, 15\}$. Then for each randomly generated $\{\G_0^{(k)}\}_{k=1}^K$, we randomly select $10000$ permutations and estimate the corresponding $c_t$ by  $c_t := \max_{\pi \in \Pi} |\G_{0\pi}^\un| / \big(\frac{1}{K}\sum_{k=1}^K |\G_{0\pi}^{(k)}|\big)$.

In Figure~\ref{fig:cd7} we present the estimated value of $c_t$ as a function of $K$ for $p = 10, 30, 50$. Note that for each setting, we plotted the estimated $c_t$ by averaging over $100$ realizations. The figure shows that if the private-to-core edge ratio is below $0.6$, Condition~\ref{cd7} holds with $c_t \leq 3$, even for very large $K = 15$. This implies that when the true DAGs $\{\G_0^{(k)}\}_{k=1}^K$ are highly overlapping, Condition~\ref{cd7} can usually be satisfied with a reasonably small constant $c_t$. This is in line with the results from Section~\ref{sec:exp_joint}, where we showed that \texttt{jointGES} significantly outperforms the separate estimation approaches.

\subsection{Gene regulatory networks of ovarian cancer subtypes}\label{Ss:cancer_data}

To assess the performance of the proposed joint $\ell_0$-penalized maximum likelihood method on real data, we analyzed gene expression microarray data of patients with ovarian cancer~\cite{tothill2008novel}. 
Patients were divided into six subtypes of ovarian cancer, labeled as C1-C6, where C1 is characterized by significant differential expression of genes associated with stromal and immune cell types and with a lower survival rate as compared to the other 5 subtypes. We hence grouped the subtypes C2-C6 together and our goal was to infer the differences in terms of gene regulatory networks in ovarian cancer that could explain the different survival rates. 
The gene expression data in~\cite{tothill2008novel} includes the expression profile of $n=83$ patients with C1 subtype and $n=168$ patients with other subtypes. We implemented our jointGES algorithm (Algorithm~\ref{alg:jointl0}) to jointly learn \emph{two} gene regulatory networks: one corresponding to the C1 subtype $\G_{\mathrm{C1}}$ and another corresponding to the other five subtypes together $\G_{\mathrm{C2-6}}$. 
In addition, as in~\cite{cai2015joint}, we focused on a particular pathway, namely the \emph{apoptosis} pathway. Using the KEGG database~\cite{kanehisa2011kegg,ogata1999kegg} we selected the genes in this pathway that were associated with at most two microarray probes, resulting in a total of $p=76$ genes.

Table~\ref{tab:edges} lists the number of edges discovered by jointGES as well as for two separate estimation methods, namely using the GES~\cite{chickering2002optimal} and PC~\cite{spirtes:00} algorithms. All three methods were combined with stability selection~\cite{Meinshausen2010} in order to increase robustness of the output and provide a fair comparison. 
As expected, the two graphs inferred using jointGES share a significant proportion of edges, whereas the overlap is markedly smaller for the two separate estimation methods. Interestingly, under all estimation methods the network for the C1 subtype contains fewer edges than the network of the other subtypes, thereby suggesting that $\G_{\mathrm{C1}}$ could lack some important links that are associated with patient survival.

To obtain more insights into the relevance of the obtained networks, we analyzed the inferred hub nodes in the three networks. For our analysis we defined as hub nodes those nodes for which the sum of the in- and out-degree was larger than some threshold $T$ in the union of the two DAGs, where $T$ was chosen such that there are at most $5$ hub nodes discovered by each method. For jointGES, this union is given by $\hat{\G}^\un$, the graph identified in the first step of Algorithm~\ref{alg:jointl0}. The hub nodes identified by jointGES are CAPN1, CTSD, LMNB1, CSF2RB, BIRC3. Among these, CAPN1~\cite{gau2015brca1}, CTSD~\cite{SSB03}, LMNB1~\cite{santin2004gene}, and BIRC3~\cite{Jonsson2014} have been reported as being central to ovarian cancer in the existing literature. In addition, CSF2RB was also discovered by joint estimation of undirected graphical models on this data set~\cite{cai2015joint}. The hub nodes discovered by GES are ATF4, BIRC2, CSF2RB, TUBA1C, MAPK3, while PC only discovered the hub node CSF2RB. While we were not able to validate the relevance of any of these genes for ovarian cancer in the literature, interestingly, CSF2RB has been identified as a hub node by all methods, thereby suggesting this gene as an interesting candidate for future genetic intervention experiments.


\begin{table}
	\centering
	\begin{tabular}{c | c c c}
		\hline
		Method & $|\G_{\mathrm{C1}}|$ & $|\G_{\mathrm{C2-C6}}|$ & $|\G_{\mathrm{C1}} \cap \G_{\mathrm{C2-C6}}|$ \\
		\hline
		JointGES & 50 & 73 & 48 \\
		GES & 68 & 101 & 32 \\
		PC & 14 & 30 & 9 \\
		\hline
	\end{tabular}
	\caption{Number of edges in the DAGs estimated by different methods for the gene regulatory network of subtype C1 ($|\G_{\mathrm{C1}}|$) and all other subtypes ($|\G_{\mathrm{C2-6}}|$). The last column shows the number of edges shared between both inferred graphs.}
	\label{tab:edges}
\end{table}

\section{Discussion}
\label{sec:disscusion}

In this paper we presented jointGES, an algorithm for the joint estimation of multiple \emph{related} DAG models from independent realizations. Joint estimation is of particular interest in applications where data is collected not from a single DAG, but rather multiple related DAGs, such as gene expression data from different tissues, cell types or from different interventional experiments. JointGES first estimates the union of DAGs $\hat{\G}^\un$ by applying a greedy search to approximate the joint $\ell_0$-penalized maximum likelihood estimator and then it uses variable selection to discover each DAG as a subDAG of $\hat{\G}^\un$. 
From an algorithmic perspective, jointGES is to the best of our knowledge the first method to jointly estimate related DAG models in the high-dimensional setting.
From a theoretical perspective, we presented consistency guarantees on the joint $\ell_0$-penalized maximum likelihood estimator, and showed that the accuracy bound scales with the total number of samples, rather than the number of samples from each DAG that would be achieved by separately estimating each DAG. 
As a corollary to this result, we obtained consistency guarantees for $\ell_0$-penalized maximum likelihood estimation of a causal graph from a mix of observational and interventional data.
Finally, we validated our results via numerical experiments on simulated and real-world data, showing that the proposed jointGES algorithm for joint inference outperforms separate-inference approaches using well-established algorithms such as PC and GES.

The present work serves as a platform for the potential development of multiple future directions. These directions include: i) relaxing the assumption that all DAGs must be consistent with the same underlying permutation; ii) extending jointGES to the setting where the samples come from $K$ related DAGs but it is unknown a priori which particular DAG each sample comes from; this is for example of interest in the analysis of gene expression data from tumors or tissues that consist of a mix of cell types; iii) extending jointGES to allow for latent confounders.

\section*{Acknowledgments}
The authors thank two anonymous reviewers for their thoughtful comments, which helped improve our paper. Santiago Segarra was supported by an MIT IDSS seed grant. Caroline Uhler was partially supported by NSF (DMS-1651995), ONR (N00014-17-1-2147 and N00014-18-1-2765), IBM, a Sloan Fellowship and a Simons Investigator Award.

\begin{appendix}

	\section{Theoretical analysis of statistical consistency results} \label{sec:proof}	
	In the following, we develop the proofs of Theorems~\ref{thm:l0} and~\ref{thm:relax}. 
	To facilitate understanding, we first provide a high-level explanation of the rationale behind the proof.
	If we have data generated from a single DAG and we are given a permutation $\pi$ consistent with the true DAG a priori, then we can estimate $(\hat{A}, \hat{\Omega})$ by performing $p$ regressions as explained in Section~\ref{Ss:dags_and_sems}.
	By contrast, when the permutation is unknown and we need to consider all the possible permutations, the total number of regressions to run increases to $p \cdot p !$. However, these regressions are not independent and, intuitively, by bounding the noise level of a subset of these regressions, we can derive bounds for the noise on the other ones.
	We characterize the `noise level' of these regressions by analyzing the asymptotic properties of three random events. More precisely, whenever these events hold -- and we show that they hold with high probability --, the noise is small enough so that the error of the $\ell_0$-penalized maximum likelihood estimator can also be bounded. Finally, we use this upper bound in the error to show that the recovered graph converges to a minimal I-MAP that is as sparse as the true DAG.

	The remainder of the appendix is organized as follows. In Section~\ref{Ss:random_events} we define the three random events previously mentioned and show that each of them holds with high probability. Section~\ref{sec:l0proof} then leverages the definition of these events to prove Theorem~\ref{thm:l0}, our main result. In Section~\ref{sec:relaxproof} we prove Theorem~\ref{thm:relax}, which relaxes some of the conditions of  Theorem~\ref{thm:l0}, but uses similar proof techniques. Finally, Section~\ref{sec:corproof} fleshes out the proof of Corollary~\ref{cor:intl0}, our result applicable to the setting for interventional data.
	
	Throughout the appendix, we use the following notation. Let $\hat{a}_{j}$ denote the $j$-th column of $\hat{A}$ and $\hat{\omega}_j$ denote the $j$-th diagonal entry of $\hat{\Omega}$. Also, denote by $a_{0j\pi}$ and $\omega_{0j\pi}$ the $j$-th column of $A_{0\pi}$ and the $j$-th diagonal entry of $\Omega_{0\pi}$, respectively. 
	
	\subsection{Random events}\label{Ss:random_events}
	
	As in \cite{vandegeer2013}, our proofs of Theorems~\ref{thm:l0} and~\ref{thm:relax} are based on a set of random events. 
	However, the events considered here differ from those in \cite{vandegeer2013} since, as explained in Section~\ref{sec:maintheory}, a naive application of the guarantees in \cite{vandegeer2013} to the joint estimation scenario does not achieve the desired learning rates [cf.~discussion after~\eqref{E:guarantee_separate_estimation}].
	Intuitively, the rate gain achieved here comes from the assumption that all DAGs are consistent with a permutation, allowing us to effectively reduce the size of the search space.
	
	In our proofs we consider three random events $\Ev_1$, $\Ev_2$, and $\Ev_3$ that are respectively stated -- along with proofs showing that they hold with high probability -- in Sections \ref{Sss:random_event_1}, \ref{Sss:random_event_2}, and \ref{Sss:random_event_3}.
	
	\subsubsection{Random event $\Ev_1$}\label{Sss:random_event_1}
	
	Let $\epsilon_{j\pi}^{(k)} \in \R^n$ denote the residual when regressing $X_j^{(k)}$ on $X_S^{(k)}$ with $S = \lbrace i \,|\, \pi(i) < \pi(j)\rbrace$, i.e., $\epsilon_{j\pi}^{(k)} := X_j^{(k)} - {X^{(k)}}^T a_{0j\pi}^{(k)}$. 
	Similarly, let $\hat{\epsilon}_{j\pi}^{(k)} \in \R^n$ denote the regression residual from the sampled data, i.e., $\hat{\epsilon}_{j\pi}^{(k)}  := \hat{X}_j^{(k)} - \hat{X}^{(k)} a_{0j\pi}^{(k)}$. 
	Consider a generic set $\{ A^{(k)}\}_{k=1}^K$ of adjacency matrices consistent with a given permutation $\pi$, where the columns of $A^{(k)}$ are denoted by $A^{(k)}:= (a_1^{(k)}, \ldots, a_p^{(k)})$, and let $\G^\un$ denote the union of the support of $A^{(1)}, \ldots, A^{(K)}$. 
	Then, event $\Ev_1$ is defined as
	\begin{align}\label{E:event_E_1}
	\begin{split}
	\Ev_1 := & \Bigg\{ 2 \sum_{j=1}^p \sum_{k=1}^K \frac{w_k}{n_k} \left\vert
	\hat{\epsilon}_{j\pi}^{(k)T} \hat{X}^{(k)} (a_j^{(k)} - a_{0j\pi}^{(k)}) \right\vert \\ & \leq \delta_1 \sum_{j=1}^p \sum_{k=1}^K \frac{w_k}{n_k} \left\Vert \hat{X}^{(k)} (a_j^{(k)} -  a_{0j\pi}^{(k)}) \right\Vert_2^2 + \lambda_1^2 \left( \vert \G^\un \vert + \vert \G_{0\pi}^\un\vert \right) / \delta_1, \\  & \quad  \forall \, \text{permutations}\, \pi\ \text{, and}\ \forall \, \{ A^{(k)}\}_{k=1}^K \, \text{consistent with}\, \pi \Big\rbrace,
	\end{split}
	\end{align}
	for some constant $\delta_1 > 0$ and some $\lambda_1 \asymp \sqrt{(\log p)/n}$. 
	On random event $\Ev_1$ a uniform inequality holds across all $K$ DAGs for the \emph{sample correlation} between the regression residual $\epsilon_{j \pi}^{(k)}$ and any random variable spanned by the random vector $X_S^{(k)}$, i.e., ${X^{(k)}}^T v$ for any $v \in \R^p$ with $v_i = 0$ for all $i \not\in S$. 
	Notice that for convenience for further steps of the analysis, this generic vector $v$ is written as $a_j^{(k)} - a_{0j\pi}^{(k)}$ in \eqref{E:event_E_1}.
	Furthermore, for simplicity in the rest of this appendix, we denominate the space spanned by $X_S^{(k)}$ as the {\it projection space} of $\epsilon_{j\pi}^{(k)}$.
	Intuitively, one could foresee $\Ev_1$ holding since the expected correlation between the regression residual $\epsilon_{j\pi}^{(k)}$ and $X_S$ is equal to zero, and therefore the sample correlation can be upper bounded by a sum of terms that converge to zero as $n \to \infty$ as in~\eqref{E:event_E_1}.

	We now show that random event $\Ev_1$ holds with high probability, a result stated in Theorem~\ref{thm:lambda1}. 
	Essential towards proving this result is the observation that, since the random variable ${X^{(k)}}^T (a_j^{(k)} - a_{0j\pi}^{(k)})$ lies within the projection space of $\epsilon_{j\pi}^{(k)}$, these two random variables are independent.
	We can therefore deal with the randomness in $\hat{\epsilon}_{j\pi}^{(k)}$ and $\hat{X}_S^{(k)}$ separately, one at a time.
	To formally leverage this observation, we rely on Lemma~7.4 of~\cite{vandegeer2013}, stated next for completeness.
	
	\begin{lemma}
		\cite[Lemma 7.4]{vandegeer2013}
		\label{lm:lambda1}
		Let $Z$ be a fixed $n \times m$ matrix and $e_1, \cdots, e_n$ be independent $\N(0, \sigma_e^2)$-distributed random variables. Then for all $t > 0$
		\begin{align*}
		\PP \left(\underset{\Vert Z a\Vert_2^2 / n \leq 1}{\sup} | e^T Z a | / n \geq \sigma_e (\sqrt{2 m / n} + \sqrt{2t / n})\right) \leq \exp(-t).
		\end{align*}
	\end{lemma}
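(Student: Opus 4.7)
The plan is to reduce the supremum in question to the norm of a Gaussian projection, and then invoke a standard chi-squared concentration bound. The first key observation is that the set $\{Za : \|Za\|_2^2/n \le 1\}$ is exactly $\{u \in \mathrm{col}(Z) : \|u\|_2 \le \sqrt{n}\}$, so writing $u = Za$ we have
\begin{align*}
\sup_{\|Za\|_2^2/n \le 1} \frac{|e^T Z a|}{n} \;=\; \sup_{u \in \mathrm{col}(Z),\, \|u\|_2 \le \sqrt{n}} \frac{|e^T u|}{n} \;=\; \frac{\|P_Z\, e\|_2}{\sqrt{n}},
\end{align*}
where $P_Z$ is the orthogonal projector onto $\mathrm{col}(Z)$. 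The last equality is Cauchy--Schwarz together with the fact that the maximizer is $u = \sqrt{n}\, P_Z e / \|P_Z e\|_2$. This step converts the ostensibly infinite-dimensional supremum into a single, explicitly computable Gaussian quantity.

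Next, since $e \sim \N(0,\sigma_e^2 I_n)$ and $P_Z$ is a projector of rank $r := \mathrm{rank}(Z) \le m$, the random variable $\|P_Z e\|_2^2/\sigma_e^2$ has the $\chi^2_r$ distribution. I would then apply the Laurent--Massart one-sided deviation inequality
\begin{align*}
\PP\!\left(\chi^2_r \ge r + 2\sqrt{r t} + 2t\right) \le \exp(-t),
\end{align*}
and note that $(\sqrt{2m} + \sqrt{2t})^2 = 2m + 4\sqrt{m t} + 2t \ge r + 2\sqrt{r t} + 2t$ for every $r \le m$. Consequently $\PP(\sqrt{\chi^2_r} \ge \sqrt{2m} + \sqrt{2t}) \le \exp(-t)$, which after scaling by $\sigma_e/\sqrt{n}$ gives exactly the inequality claimed in the lemma.

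There is essentially no obstacle here beyond citing the correct concentration inequality: the reduction to a Gaussian projection is elementary, and the tail bound is classical. An equivalent and perhaps cleaner route would be to observe that $f(e) := \|P_Z e\|_2$ is $1$-Lipschitz in $e$ with $\E f(e) \le \sigma_e \sqrt{r} \le \sigma_e\sqrt{m}$, and then invoke the Gaussian concentration (Borell--Tsirelson--Ibragimov--Sudakov) inequality $\PP(f(e) \ge \E f(e) + \sigma_e\sqrt{2t}) \le \exp(-t)$; this yields the same conclusion with a slightly tighter constant. Either argument suffices, and I would present the Laurent--Massart version since it aligns directly with the form of the bound stated in the lemma.
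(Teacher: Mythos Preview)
Your proof is correct. The paper itself does not prove this lemma; it is quoted verbatim as Lemma~7.4 of \cite{vandegeer2013} and used as a black box in the proof of Theorem~\ref{thm:lambda1}. Your reduction of the supremum to $\|P_Z e\|_2/\sqrt{n}$ via Cauchy--Schwarz, followed by the Laurent--Massart chi-squared tail bound (or, equivalently, Gaussian Lipschitz concentration), is exactly the standard argument behind the cited result and fills in the omitted details cleanly.
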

	
	Based on the above lemma and recalling that $\mathcal{A}_\pi$ denotes the set of adjacency matrices consistent with a given permutation $\pi$, we can show the following result.
	
	\begin{theorem} \label{thm:lambda1}
		Assume that Conditions~\ref{cd3} and~\ref{cd9} hold, then for all $t > 0$ and all $\delta_1 > 0$,
		\begin{align*}
		\PP \, \Bigg( \max_{\pi}  \sup_{\{A^{(k)}\}_{k=1}^K \in \mathcal{A}_\pi} & 2 \sum_{j=1}^p \sum_{k=1}^K \frac{w_k}{n_k} \left\vert \hat{\epsilon}_{j\pi}^{(k)T} \hat{X}^{(k)} (a_j^{(k)} - a_{0j\pi}^{(k)}) \right\vert \\ 
		- \delta_1 \sum_{j=1}^p \sum_{k=1}^K & \frac{w_k}{n_k} \left\Vert \hat{X}^{(k)} (a_j^{(k)} -  a_{0j\pi}^{(k)}) \right\Vert_2^2 \\
		& \left. \geq \frac{16\sigma_0^2 (t + 2 \log p)(\vert \G^\un \vert + \vert \G_{0\pi}^\un\vert)}{n \delta_1} \right) \leq \exp(-t).
		\end{align*}
	\end{theorem}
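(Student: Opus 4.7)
The plan is to reduce the statement to many conditional applications of Lemma~\ref{lm:lambda1} combined via Young's inequality and then uniformized by combinatorial union bounds. Throughout, I rely on the Gaussian Cholesky fact that, for every permutation $\pi$, the residual $\hat{\epsilon}_{j\pi}^{(k)}$ is independent of $\hat{X}_{S_\pi}^{(k)}$ (where $S_\pi=\{i:\pi(i)<\pi(j)\}$) with i.i.d.\ $N(0,\omega_{0j\pi}^{(k)})$ entries; Condition~\ref{cd3} supplies the uniform upper bound $\omega_{0j\pi}^{(k)}\leq[\Sigma_0^{(k)}]_{jj}\leq\sigma_0^2$ that plays the role of $\sigma_e^2$ in Lemma~\ref{lm:lambda1}.

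Fix $\pi$, a pair $(j,k)$, and a candidate support $S'\subseteq[p]\setminus\{j\}$ of size $m$ for $b_j^{(k)}:=a_j^{(k)}-a_{0j\pi}^{(k)}$. Conditioning on $\hat{X}_{S'}^{(k)}$ and applying Lemma~\ref{lm:lambda1} with $Z=\hat{X}_{S'}^{(k)}$ and $e=\hat{\epsilon}_{j\pi}^{(k)}$ gives, with probability at least $1-e^{-t}$,
\begin{equation*}
\sup_{\mathrm{supp}(b)\subseteq S'}\frac{|\hat{\epsilon}_{j\pi}^{(k)T}\hat{X}^{(k)}b|}{n_k}\leq\sigma_0\bigl(\sqrt{2m/n_k}+\sqrt{2t/n_k}\bigr)\frac{\|\hat{X}^{(k)}b\|_2}{\sqrt{n_k}}.
\end{equation*}
Applying Young's inequality $2uv\leq\delta_1 v^2+u^2/\delta_1$ with $v=\|\hat{X}^{(k)}b\|_2/\sqrt{n_k}$ converts this into the pairwise quadratic bound $\tfrac{2}{n_k}|\hat{\epsilon}_{j\pi}^{(k)T}\hat{X}^{(k)}b|\leq\delta_1\|\hat{X}^{(k)}b\|_2^2/n_k+4\sigma_0^2(m+t)/(n_k\delta_1)$, which matches the form of the target inequality at the level of one pair.

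To combine across $(j,k)$ pairs, I multiply by $w_k$ and sum. The quadratic contributions assemble into the first sum on the RHS of the theorem. For the noise contributions, I use $w_k/n_k=1/n$ (from $w_k=n_k/n$) and the structural bound $|\mathrm{supp}(b_j^{(k)})|\leq|\mathrm{pa}_j(\mathcal{G}^\un)|+|\mathrm{pa}_j(\mathcal{G}_{0\pi}^\un)|$---valid since each $\mathcal{G}^{(k)}\subseteq\mathcal{G}^\un$---so that $\sum_j|\mathrm{supp}(b_j^{(k)})|\leq|\mathcal{G}^\un|+|\mathcal{G}_{0\pi}^\un|$ uniformly in $k$; together with $\sum_k w_k=1$ and Condition~\ref{cd9}'s $K=o(\log p)$ bound, this aggregates to at most $O(\sigma_0^2(|\mathcal{G}^\un|+|\mathcal{G}_{0\pi}^\un|)/(n\delta_1))$ up to constants absorbed into the factor $16$. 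The probabilistic $t$-term is controlled by union-bounding over the $\binom{p}{m}\leq p^m$ supports of each size $m$, over $m\in\{0,\ldots,p\}$, and over the $pK$ pairs $(j,k)$, all absorbable by replacing $t$ by $t+O(\log p)$; the two separate combinatorial enumerations---over $\mathrm{supp}(a_j^{(k)})$ and over $\mathrm{supp}(a_{0j\pi}^{(k)})$---together produce the $2\log p$ factor in the final bound.

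The main technical obstacle, I anticipate, will be the outer $\max_\pi$: a naive union bound over all $p!$ permutations would contribute $\log(p!)\asymp p\log p$ to $t$, which would inflate the rate beyond what the theorem claims. My plan to handle this is to fold the permutation supremum into the support supremum already accounted for---since the expression depends on $\pi$ only through the two sparse vectors $a_j^{(k)}$ and $a_{0j\pi}^{(k)}$, each supported in $[p]\setminus\{j\}$, enumerating supports of these vectors dominates the permutation enumeration. Collecting all pieces and absorbing remaining constants into the factor $16$ yields the bound with probability at least $1-e^{-t}$.
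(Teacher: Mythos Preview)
Your overall plan---Lemma~\ref{lm:lambda1} followed by Young's inequality and combinatorial union bounds---is the same as the paper's, but the way you combine across classes $k$ introduces a genuine gap. When you apply Lemma~\ref{lm:lambda1} separately to each pair $(j,k)$ with sample size $n_k$, the noise term after Young is $4\sigma_0^2(m+t')/(n_k\delta_1)$; multiplying by $w_k$ and summing over $k$ yields $\sum_k w_k\cdot 4\sigma_0^2(m+t')/(n_k\delta_1)=4K\sigma_0^2(m+t')/(n\delta_1)$, because $w_k/n_k=1/n$ for every $k$ and there are $K$ summands. So the confidence budget $t'$---which, after union bounds, must scale like $t+m\log p$---gets multiplied by $K$, and your final bound is $K$ times the claimed one. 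That factor cannot be ``absorbed into the $16$'' since $K$ is not a constant; you would only obtain $\lambda_1^2\asymp K\log p/n$, which is exactly the separate-estimation rate the theorem is meant to beat. The paper avoids this by \emph{concatenating} the $K$ residuals into a single length-$n$ Gaussian vector and stacking the designs into a block-diagonal matrix of column dimension $K m_j$; a \emph{single} call to Lemma~\ref{lm:lambda1} then gives noise $4\sigma_0^2(Km_j+t')/(n\delta_1)$, so only the dimension (not the confidence level) picks up the factor $K$, and $Km_j$ is absorbed via $K=o(\log p)$ from Condition~\ref{cd9}. To make the concatenated bound control $\sum_k|\cdot|$ rather than $|\sum_k\cdot|$, the paper uses a sign-flip (replace $a_j^{(k)}$ by $2a_{0j\pi}^{(k)}-a_j^{(k)}$ whenever the cross term is negative), which leaves both the support and the design norm unchanged.

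Two smaller accounting issues are also worth fixing. First, enumerating $\binom{p}{m}$ supports costs $m\log p$ in the confidence parameter, not $O(\log p)$; the additive ``$2\log p$'' in the theorem comes from the union over $p$ nodes $j$ and $p$ possible parent-set sizes, not from ``two separate support enumerations.'' Second, your treatment of $\max_\pi$ is on the right track but should be stated as the paper does: the event for fixed $j$ depends on $\pi$ only through the parent set $\mathrm{pa}_j(\mathcal{G}_{0\pi}^{\mathrm{union}})$ (this set determines $a_{0j\pi}^{(k)}$ and hence $\hat\epsilon_{j\pi}^{(k)}$ simultaneously for all $k$), and there are at most $\binom{p}{m}$ such sets of a given size $m$---this is what replaces the naive $p!$ union bound.
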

	
	
	
	\begin{proof}
		Let $\hat{\epsilon}_{j\pi}$ and $a_{0j\pi}$ be the concatenated vectors
		$\hat{\epsilon}_{j\pi}:= (\hat{\epsilon}_{j\pi}^{(1)T}, \ldots, \hat{\epsilon}_{j\pi}^{(K)T})^T $ and
		$a_{0j\pi} := (a_{0j\pi}^{(1)T}, \ldots, a_{0j\pi}^{(K)T} )^T$.
		Also, define the block diagonal matrix $$\hat{X} := \text{diag}(\hat{X}^{(1)}, \cdots, \hat{X}^{(K)}).$$
		We denote by $\mathcal{A}_{j\pi} \subset \R^{pK}$ the set containing all vectors that can be formed by vertically concatenating the $j$th columns $a_j^{(k)}$ for all $k$ and satisfy
		\begin{align*}
		\mathcal{A}_{j\pi} := \left\lbrace a_j \in \R^{pK} \mid \forall i, \textrm{if}\ \exists k\ \textrm{such that}\ a_{i,j}^{(k)} \neq 0,\ \textrm{then}\ X_i^{(k)} \independent \epsilon_{j \pi}^{(k)} \ \textrm{for all $k$} \right\rbrace.
		\end{align*}
		Based on this, and recalling that ${\pa}_{j}(\cdot)$ denotes the set of parent nodes of $j$ in the argument graph, we define the random event $\B_{j\pi}$ as
		\begin{align}\label{E:event_B_j}
		\B_{j\pi} := & \left\lbrace \exists a_j \in \mathcal{A}_{j\pi} : \sup_{\Vert \hat{X}(a_j - a_{0j\pi}) \Vert_2^2 / n \leq 1} \left\vert \hat{\epsilon}_{j\pi}^T \hat{X} (a_j - a_{0j\pi}) \right\vert / n \right. \\
		&\qquad \geq \sigma_0 \left( \sqrt{\frac{2 K (|{\pa}_{j}(\G^{\un})| + |{\pa}_{j}(\G_{0\pi}^{\un})|)}{n}} \right. \nonumber\\
		& \qquad\qquad + \left.\left.\sqrt{\frac{2 (t + |{\pa}_{j}(\G_{0\pi}^{\un})| \log p + 2 \log p)}{n}} \right) \right\rbrace. \nonumber
		\end{align}
		Combining the facts that: i) $a_j - a_{0j\pi}$ may have at most $K (|{\pa}_{j}(\G^{\un})| + |{\pa}_{j}(\G_{0\pi}^{\un})|)$ non-zero entries, and 
		ii) the variance of each element of $\hat{\epsilon}_{j\pi}$ is upper bounded by $\sigma_0^2$ (cf.~Condition~\ref{cd3}), we may apply Lemma~\ref{lm:lambda1} to show that
		\begin{align*}
		\PP(\B_{j\pi}) \leq \exp\left(-t - |{\pa}_{j}(\G_{0\pi}^{\un})| \log p - 2 \log p\right).
		\end{align*}
		As can be seen from \eqref{E:event_B_j}, event $\B_{j\pi}$ depends exclusively on the set of parents of node $j$ in $\G_{0\pi}^\un$. 
		Putting it differently, if for two permutations $\pi_1$ and $\pi_2$ node $j$ has the same set of parents in $\G_{0\pi_1}^\un$ and $\G_{0\pi_2}^\un$, then the random events $\B_{j\pi_1}$ and $B_{j\pi_2}$ coincide, since $\mathcal{A}_{j\pi}$, $a_{0j\pi}$ and $\hat{\epsilon}_{j\pi}$ would all be the same for $\pi \in \{ \pi_1, \pi_2 \}$.
		If we denote by $\Pi_j(m)$ the set of permutations where node $j$ has exactly $m$ parents in $\G_{0\pi}^\un$, then there are at most ${p \choose m}$ unique events $\B_{j\pi}$ for all $\pi \in \Pi_j(m)$. We therefore obtain that
		\begin{align*}
		\PP \left( \bigcup_{\pi \in \Pi_j(m)} \B_{j\pi} \right) \leq {p \choose m} \exp\left(-t - m \log p - 2 \log p\right) \leq \exp\left(-t - 2 \log p\right).
		\end{align*}
		Applying a union bound on the events $\B_{j\pi}$ across all nodes $j$ and permutations $\pi$ yields that
		\begin{align}\label{E:union_bound_B_j}
		\PP \left(\bigcup_{j} \bigcup_{\pi} \B_{j\pi}\right)
		\leq \sum_{j=1}^p \sum_{m=1}^{p-1} \PP\left(\bigcup_{\pi \in \Pi_j(m)} \B_{j\pi}\right)
		\leq p^2 \exp\left(-t - 2 \log p\right) = \exp(-t).
		\end{align}
		Combining \eqref{E:union_bound_B_j} and \eqref{E:event_B_j} it follows that with probability at least $1 - \exp(-t)$, for all $j$, $\pi$ and all $a_j \in \mathcal{A}_{j\pi}$,
		\begin{align*}
		\frac{\vert \hat{\epsilon}_{j\pi}^T \hat{X} (a_j - a_{0j\pi}) \vert}{\Vert \hat{X} (a_j - a_{0j\pi}) \Vert_2 } \leq \sigma_0 & \left( \sqrt{2 K (|{\pa}_{j}(\G^{\un})| + |{\pa}_{j}(\G_{0\pi}^{\un})|)}\right. \\
		& \qquad\qquad+ \left.\sqrt{2 (t + |{\pa}_{j}(\G_{0\pi}^{\un})| \log p + 2 \log p)} \right).
		\end{align*}
		Based on the collection of adjacency matrices $\{A^{(k)}\}_{k=1}^K \in \mathcal{A}_\pi$ we define another collection
		$\{A'^{(k)}\}_{k=1}^K$ where each column ${a'}^{(k)}_j$ is given by
		\begin{align*}
		{a'}^{(k)}_j = \left\lbrace 
		\begin{array}{ll}
		a^{(k)}_j & \mathrm{if} \,\, \hat{\epsilon}^{(k)T}_{j\pi} \hat{X}^{(k)} (a_j^{(k)} - a_{0j\pi}^{(k)}) \geq 0, \\
		2a_{0j\pi}^{(k)} - a_j^{(k)} \,\,\, & \mathrm{otherwise}.
		\end{array}
		\right.
		\end{align*}
		Notice that the positions of the non-zero entries in ${a'}_j^{(k)} - a_{0j\pi}^{(k)}$  coincide with those in $a_j^{(k)} - a_{0j\pi}^{(k)}$. By also using the fact that
		\begin{align*}
		& \| \hat{X} (a_j - a_{0j\pi}) \|_2^2 = \sum_{k=1}^K \| \hat{X}^{(k)} (a_j^{(k)} - a_{0j\pi}^{(k)}) \|_2^2 \\
		& \qquad= \sum_{k=1}^K \| \hat{X}^{(k)} ({a'}^{(k)}_j - a_{0j\pi}^{(k)}) \|_2^2 = \| \hat{X} (a_j' - a_{0j\pi}) \|_2^2,
		\end{align*}
		we have that for all $j$ and $\pi$ with probability at least $1 - \exp(-t)$, it holds that
		\begin{align*}
		& \frac{\sum_{k=1}^K \vert \hat{\epsilon}^{(k)T}_{j\pi} \hat{X}^{(k)} (a_j^{(k)} - a_{0j\pi}^{(k)}) \vert }{\Vert \hat{X} (a_j - a_{0j\pi}) \Vert_2} = \frac{\sum_{k=1}^K \vert \hat{\epsilon}^{(k)T}_{j\pi} X^{(k)} ({a'}_j^{(k)} - a_{0j\pi}^{(k)}) \vert}{\Vert \hat{X} (a_j' - a_{0j\pi}) \Vert_2 } \\
		& \qquad \qquad = \frac{\vert \hat{\epsilon}_{j\pi}^T \hat{X} (a_j' - a_{0j\pi}) \vert}{{\Vert \hat{X} (a_j' - a_{0j\pi}) \Vert_2 }} \leq \sigma_0 \left( \sqrt{2 K (|{\pa}_{j}(\G^{\un})| + |{\pa}_{j}(\G_{0\pi}^{\un})|)} \right.\\
		& \qquad \qquad\qquad\qquad\qquad\qquad\qquad\quad + \left.\sqrt{2 (t + |{\pa}_{j}(\G_{0\pi}^{\un})| \log p + 2 \log p)} \right).\nonumber
		\end{align*}
In the above expression we may use that $ab \leq \delta_1 a^2 + b^2 / \delta_1$ for all $\delta_1, a, b > 0$  in order to obtain
\begin{align}\label{eq:l1bnd_2}
&2 \vert \hat{\epsilon}_{j\pi}^T \hat{X} (a_j' - a_{0j\pi}) \vert \leq \delta_1 \Vert  \hat{X} (a_j' - a_{0j\pi}) \Vert_2^2 \, + \\
&\qquad \qquad 4 \sigma_0^2 \left( \sqrt{2 K (|{\pa}_{j}(\G^{\un})| + |{\pa}_{j}(\G_{0\pi}^{\un})|)}\right.\nonumber \\
&\qquad\qquad\qquad + \left.\sqrt{2 (t + |{\pa}_{j}(\G_{0\pi}^{\un})| \log p + 2 \log p)} \right)^2 / \delta_1.\nonumber
\end{align}
By combining this with the fact that $\forall a, b > 0$, $(a + b)^2 \leq 2(a^2 + b^2)$ and the fact that $K = o(\log p)$ from Condition~\ref{cd9}, we get that
\begin{align}\label{eq:l1bnd_3}
& \left( \sqrt{2 K (|{\pa}_{j}(\G^{\un})| + |{\pa}_{j}(\G_{0\pi}^{\un})|)} + \sqrt{2 (t + |{\pa}_{j}(\G_{0\pi}^{\un})| \log p + 2 \log p)} \right)^2 \nonumber\\
& \hspace{4cm} \leq 4 (t + 2 \log p) (|{\pa}_{j}(\G^{\un})| + |{\pa}_{j}(\G_{0\pi}^{\un})|).
\end{align}
By replacing \eqref{eq:l1bnd_3} back into~\eqref{eq:l1bnd_2}, we obtain that
\begin{align*}
2 \vert \hat{\epsilon}_{j\pi}^T \hat{X} (a_j' - a_{0j\pi}) \vert / n \leq & \delta_1 \Vert  \hat{X} (a_j' - a_{0j\pi}) \Vert_2^2 / n \\
& \quad + \frac{16 \sigma_0^2 (t + 2 \log p)(|{\pa}_{j}(\G^{\un})| + |{\pa}_{j}(\G_{0\pi}^{\un})|)}{n \delta_1}.
\end{align*}
Rewriting the absolute value in the left-hand side as the sum of the corresponding $K$ absolute values and adding the above inequality for $j = 1, \ldots, p$ we get that, with probability at least $1 - \exp(-t)$,
\begin{align*}
\begin{split}
& 2 \sum_{j=1}^p \sum_{k=1}^K w_k \vert \hat{\epsilon}_{j \pi}^{(k)T} \hat{X}^{(k)} (a_j^{(k)} - {a_0}_{j \pi}^{(k)}) \vert / n_k  \\
& \quad\leq \!\sum_{j=1}^p \!\left( \!\delta_1 \Vert \hat{X} (a_j - a_{0j\pi}) \Vert_2^2 / n \!+\! \frac{16\sigma_0^2 (t \!+\! 2 \log p)(|{\pa}_{j}(\G^{\un})| \!+\! |{\pa}_{j}(\G_{0\pi}^{\un})|)}{n \delta_1} \!\right).
\end{split}
\end{align*}
Finally, by noticing that ${\Vert \hat{X} (a_j - a_{0j\pi}) \Vert_2^2}/{n} = \sum_{k=1}^K {w_k} \Vert \hat{X}^{(k)} (a_j^{(k)} - a_{0j\pi}^{(k)}) \Vert_2^2 / {n_k}$ we recover the statement of the theorem, thereby concluding the proof.
\end{proof}

\subsubsection{Random event $\Ev_2$}\label{Sss:random_event_2}
	
	Let $\omega_{0j\pi}^{(k)}$ denote the $j$-th diagonal entry of $\Omega_{0\pi}^{(k)}$, then $\Ev_2$ holds whenever the empirical variances of all $\epsilon_{j\pi}^{(k)}$, i.e., $\Vert \hat{\epsilon}_{j\pi}^{(k)} \Vert_2^2 / {n_k}$ are close to the true variances $\omega_{0j\pi}^{(k)}$, where
	\begin{align}\label{E:event_2}
	\Ev_2 := \left\lbrace \sum_{j=1}^p \sum_{k=1}^K w_k \left( \frac{\Vert \hat{\epsilon}_{j\pi}^{(k)} \Vert_2^2 / n_k - \omega_{0j\pi}^{(k)}}{\omega_{0j\pi}^{(k)}} \right)^2 \leq 4 \, \lambda_2^2 \left(p + \vert \G_{0\pi}^\un\vert \right)\right\rbrace,
	\end{align}
	for some $\lambda_2 \asymp \sqrt{(\log p)/n}$. Mimicking the development for event $\Ev_1$, we now show that $\Ev_2$ also holds with high probability.
	This result is stated in Theorem~\ref{thm:lambda2}.
	Similar to the proof of Theorem~\ref{thm:lambda1}, we first consider the asymptotic property for a particular node $j$ and permutation $\pi$, and then leverage this to get a uniform bound across all permutations and nodes.
	For this proof, we use the following lemma stated in~\cite{cai2015joint}, which also follows from~\cite{zaitsev1987gaussian}. After the lemma, we formally state our result.
	
	\begin{lemma}
		\cite[Lemma 2]{cai2015joint}
		\label{lm:lambda2}
		Suppose $X_1, \cdots, X_n$ are $K$-dimensional random vectors satisfying $\E X_i = 0$ and $\Vert X_i \Vert_2 \leq M$ for $1 \leq i \leq n$. We have for any $\beta > 0$ and $x > \beta$
		\begin{align*}
		\PP(\Vert \textstyle\sum_{i=1}^n X_i \Vert_2 \geq x) \leq & \PP \left( \Vert N \Vert_2 \geq (x - \beta) / \lambda_{\max}^{1/2} \right)\\
		&\qquad + c_1 K^{5/2} \exp(-c_2 K^{-5/2} \beta / M),
		\end{align*}
		where $\lambda_{\max}$ is the largest eigenvalue of $\textrm{Cov}(\sum_{i=1}^n X_i)$, $N$ is a $K$-dimensional standard normal random vector and $c_1, c_2$ are positive constants.
	\end{lemma}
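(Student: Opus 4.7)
The plan is to obtain this lemma as a direct consequence of Zaitsev's strong approximation theorem~\cite{zaitsev1987gaussian}. Specifically, Zaitsev's theorem guarantees that for independent zero-mean $K$-dimensional random vectors $X_1, \ldots, X_n$ with $\Vert X_i \Vert_2 \leq M$, one can construct, on a possibly enlarged probability space, independent Gaussian vectors $Y_1, \ldots, Y_n$ with $\textrm{Cov}(Y_i) = \textrm{Cov}(X_i)$ such that the partial-sum coupling error satisfies
\begin{align*}
\PP\left( \max_{1\leq m \leq n} \Big\Vert \textstyle\sum_{i=1}^m (X_i - Y_i) \Big\Vert_2 \geq \beta \right) \leq c_1 K^{5/2} \exp\left(-c_2 K^{-5/2} \beta / M\right)
\end{align*}
for any $\beta > 0$, where $c_1, c_2 > 0$ are absolute constants. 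This is the source of the $K^{5/2}$ factors in the claimed bound, and the hypotheses $\E X_i = 0$ and $\Vert X_i \Vert_2 \leq M$ are precisely the ones required to invoke it.

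With such a coupling in hand, I would split the tail event $\{\Vert S_n \Vert_2 \geq x\}$, where $S_n := \sum_{i=1}^n X_i$ and $T_n := \sum_{i=1}^n Y_i$, according to whether the coupling is tight. On the event $\{\Vert S_n - T_n \Vert_2 \leq \beta\}$, the triangle inequality gives $\Vert S_n \Vert_2 \leq \Vert T_n \Vert_2 + \beta$, so
\begin{align*}
\PP(\Vert S_n \Vert_2 \geq x) \;\leq\; \PP(\Vert T_n \Vert_2 \geq x - \beta) \;+\; \PP(\Vert S_n - T_n \Vert_2 > \beta).
\end{align*}
The second term is immediately bounded by Zaitsev's estimate above (using $m=n$ in the maximum), yielding the second summand $c_1 K^{5/2} \exp(-c_2 K^{-5/2} \beta / M)$ in the lemma. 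The hypothesis $x > \beta$ is used here only to ensure that $x - \beta > 0$ so that the first term above is a meaningful probability.

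For the Gaussian term, I would use that $T_n$ is a centered Gaussian vector in $\R^K$ with covariance $\Sigma := \textrm{Cov}(S_n)$, since the $Y_i$ are independent with matching covariances. Writing $T_n \stackrel{d}{=} \Sigma^{1/2} N$ for $N \sim \N(0, I_K)$, one has $\Vert T_n \Vert_2 \leq \lambda_{\max}^{1/2} \Vert N \Vert_2$, hence
\begin{align*}
\PP(\Vert T_n \Vert_2 \geq x - \beta) \;\leq\; \PP\left( \Vert N \Vert_2 \geq (x - \beta) / \lambda_{\max}^{1/2} \right),
\end{align*}
which is exactly the Gaussian tail in the statement. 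Summing the two bounds produces the claimed inequality.

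The only real obstacle is Zaitsev's strong approximation itself, which is a deep result proved via characteristic function estimates and careful truncation arguments; however, since we are permitted to cite~\cite{zaitsev1987gaussian}, the lemma reduces to the elementary triangle-inequality decomposition plus the eigenvalue bound described above. Apart from this, one only needs to be mildly careful about measurability when enlarging the probability space to accommodate the coupling, but this is standard.
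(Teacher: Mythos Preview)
Your derivation is correct and matches the route the paper itself points to: the paper does not prove this lemma but cites it from \cite{cai2015joint} and remarks that it ``also follows from~\cite{zaitsev1987gaussian}'', which is precisely the Zaitsev strong-approximation argument you outline (coupling, triangle-inequality split, then the operator-norm bound $\Vert \Sigma^{1/2} N \Vert_2 \leq \lambda_{\max}^{1/2}\Vert N\Vert_2$). One small caveat worth flagging: Zaitsev's theorem requires the $X_i$ to be independent, an assumption you correctly invoke but which is not stated explicitly in the lemma; this is an omission in the lemma's statement rather than in your argument, and independence does hold in the paper's application (the vectors $u_t$ in the proof of Theorem~\ref{thm:lambda2} are built from independent samples).
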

	
	\begin{theorem} \label{thm:lambda2}
		Assume Conditions~\ref{cd6} and~\ref{cd9} hold, then there exist constants $c_1, c_2, c_3>0$ such that
		\begin{align*}
		\PP \Bigg( \exists \pi: \sum_{j=1}^p \sum_{k=1}^K w_k \left( \frac{\Vert \hat{\epsilon}_{j\pi}^{(k)} \Vert_2^2 / n_k - \omega_{0j\pi}^{(k)}}{\omega_{0j\pi}^{(k)}} \right)^2 \geq c_1 \frac{c_sp\log p + \vert \G_{0\pi}^\un\vert \log p}{n} \Bigg)
		\\
		\leq c_2 \exp(-c_3 \log p),
		\end{align*}
		where $c_s$ is the constant defined in Condition~\ref{cd6}.
	\end{theorem}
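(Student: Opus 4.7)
The plan is to exploit the fact that, for each fixed triple $(j,k,S)$ with $S$ a candidate parent set, the residual vector $\hat{X}_j^{(k)} - \hat{X}^{(k)} a_0^{(k)}(S)$ has $n_k$ i.i.d.\ $\mathcal{N}(0, \omega_0^{(k)}(S))$ entries; hence its normalized squared norm is a shifted and scaled chi-squared variable, to which Laurent--Massart concentration applies directly. The supremum over permutations is then handled by a union bound over parent sets. Concretely, I would reparametrize: for each $S \subseteq [p]\setminus\{j\}$, define $Y_{jk}(S)$ analogously to $Y_{jk}$ but using the population regression of $X_j^{(k)}$ on $X_S^{(k)}$, noting that $a_{0j\pi}^{(k)}$ and $\omega_{0j\pi}^{(k)}$ depend on $\pi$ only through the parent set $S_j^{(k)}(\pi) := \pa_j(\G_{0\pi}^{(k)})$, which by Condition~\ref{cd6} is uniformly bounded in size by some $d^*$. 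Since $n_k Y_{jk}(S) \sim \chi^2_{n_k} - n_k$, Laurent--Massart gives, for any $x > 0$,
\[
\PP\bigl(|Y_{jk}(S)| \geq 2\sqrt{x/n_k} + 2x/n_k\bigr) \leq 2\exp(-x).
\]

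Setting $x = (|S| + c_s)\log p$ and union bounding over all triples $(j, k, S)$ with $|S| \leq d^*$ gives total failure probability at most
\[
\sum_{m=0}^{d^*} p \cdot K \cdot \binom{p-1}{m} \cdot 2 p^{-(m + c_s)} \;\leq\; 2K(d^* + 1)\, p^{1 - c_s},
\]
which is at most $c_2 \exp(-c_3 \log p)$ once $c_s$ is taken sufficiently large (using $K = o(\log p)$ from Condition~\ref{cd9}). On the complement of this failure event, substituting $S = S_j^{(k)}(\pi)$ gives simultaneously for every $\pi$ that $Y_{jk}^2 \lesssim (|S_j^{(k)}(\pi)| + c_s)\log p / n_k$, where Condition~\ref{cd6} ensures the linear Laurent--Massart term dominates the quadratic correction $(x/n_k)^2$. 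Multiplying by $w_k$ (using $w_k/n_k = 1/n$) and summing yields
\[
\sum_{j,k} w_k Y_{jk}^2 \;\lesssim\; \frac{\log p}{n} \sum_{j,k} \bigl(|S_j^{(k)}(\pi)| + c_s\bigr) \;=\; \frac{\log p}{n}\Bigl(\sum_k |\G_{0\pi}^{(k)}| + Kp\, c_s\Bigr) \;\leq\; \frac{K\log p}{n}\bigl(|\G_{0\pi}^\un| + p c_s\bigr),
\]
where the last step uses $|\G_{0\pi}^{(k)}| \leq |\G_{0\pi}^\un|$ for each $k$. Absorbing the factor $K$ into $c_1$ (legitimate since $K = o(\log p)$) delivers the stated bound.

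The principal obstacle is the uniform control over permutations: the reparametrization reduces this to a union bound over $O(p^{d^*})$ subsets, which would be ruinous without the calibration $x = (|S| + c_s)\log p$ that precisely cancels the combinatorial factor $\binom{p-1}{|S|} \leq p^{|S|}$ against the chi-squared tail $p^{-|S|}$, leaving a summable $p^{-c_s}$ residual per size class. A secondary technical point is verifying that Condition~\ref{cd6} indeed makes $x/n_k$ negligible throughout, so that the quadratic Laurent--Massart term $(x/n_k)^2$ does not disturb the final rate; this is where the precise form of Condition~\ref{cd6} (and in particular the role of $K^7$) enters.
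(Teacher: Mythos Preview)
Your termwise Laurent--Massart argument is clean and the union bound over parent sets is set up correctly, but the final step fails: you cannot absorb the factor $K$ into the constant $c_1$. The theorem asserts the existence of \emph{universal} constants $c_1,c_2,c_3$, while Condition~\ref{cd9} explicitly allows $K$ to grow with $p$ (as $o(\log p)$). Your bound
\[
\sum_{j,k} w_k Y_{jk}^2 \;\lesssim\; \frac{K\log p}{n}\bigl(|\G_{0\pi}^\un| + p\,c_s\bigr)
\]
is therefore off by a factor $K$ from the target, and this is not cosmetic: the whole point of the theorem downstream is to feed into the rate comparison showing that joint estimation beats separate estimation by exactly a factor $K$ (cf.\ the discussion around~\eqref{E:guarantee_separate_estimation}). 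A bound that already carries a $K$ would erase that gain. The loss is structural: once you control each $Y_{jk}^2$ separately by $(|S_j^{(k)}(\pi)|+c_s)\log p/n_k$ and then sum, the $Kp\,c_s$ term and the $\sum_k|\G_{0\pi}^{(k)}|\le K|\G_{0\pi}^\un|$ term are both unavoidable.

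The paper avoids this by treating the $K$ classes jointly rather than termwise. For each fixed $(j,\pi)$ it packages the $K$ normalized chi-squared deviations into a $K$-dimensional vector $u_t$ so that $\bigl\|\sum_t u_t\bigr\|_2^2$ equals $\sum_k w_k Y_{jk}^2$ exactly, and then invokes a Zaitsev-type Gaussian approximation (Lemma~\ref{lm:lambda2}, from Cai et al.) to bound the $\ell_2$-norm of this vector sum directly. This yields the per-node bound $c_1\log p\,(|\pa_j(\G_{0\pi}^\un)|+c_s)/n$ with no $K$ in front; summing over $j$ then gives the claim. The $K^{5/2}$ prefactors in the Zaitsev lemma and the truncation level (with $a=7/2$) are precisely what force the $K^7$ appearing in Condition~\ref{cd6}; in your argument, by contrast, Condition~\ref{cd6} is only used to bound $d^\ast$ and the quadratic Laurent--Massart term, so the $K^7$ never actually does any work---which is a symptom that the mechanism the condition is designed to support is missing from your proof.
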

	
	\begin{proof}
		We begin by analyzing the asymptotic properties of $\epsilon_{j\pi}^{(k)}$ for all $k$ given a fixed permutation $\pi$ and node $j$. More specifically, consider the following random event
		\begin{align} \label{eq:lambda21}
		\C_{j\pi} := \left\lbrace \sum_{k=1}^K w_k \left( \frac{\Vert \hat{\epsilon}_{j\pi}^{(k)} \Vert_2^2 / n_k - \omega_{0j\pi}^{(k)}}{\omega_{0j\pi}^{(k)}} \right)^2 \geq c_1^2 \frac{\log p \,  (|{\pa}_{j}(\G_{0\pi}^{\un})|+c_s)}{n} \right\rbrace,
		\end{align}
		for some positive constant $c_1$. 
		Following the proof of Theorem~1 in~\cite{cai2015joint}, we define $u_t^{(k)}$ as follows:
		\begin{equation*}
		u_t^{(k)} :=
		\begin{cases}
		\frac{\sqrt{w_k}}{n_k} \left( \frac{{[\hat{\epsilon}_{j\pi}^{(k)}]}_t^2 - \omega_{0j\pi}^{(k)}}{\omega_{0j\pi}^{(k)}} \right) \quad &\mathrm{if} \,\, t \leq n_k,\\
		0 \qquad &\mathrm{otherwise}.
		\end{cases}
		\end{equation*}
		Denoting by $u_t = (u_t^{(1)}, \cdots, u_t^{(K)})^T$ the random vector collecting all $u_t^{(k)}$, by definition it follows that
		\begin{align*}
		\sum_{k=1}^K w_k \left( \frac{\Vert \hat{\epsilon}_{j\pi}^{(k)} \Vert_2^2 / n_k - \omega_{0j\pi}^{(k)}}{\omega_{0j\pi}^{(k)}} \right)^2 = \left\Vert \sum_{t=1}^n u_t \right\Vert_2^2.
		\end{align*}
		A straightforward substitution in \eqref{eq:lambda21} allows us to rewrite the event $\C_{j\pi}$ as
		\begin{align}\label{E:def_lam_e_2}
		\C_{j\pi} := \left\lbrace \left\Vert \sum_{t=1}^n u_t \right\Vert_2 \geq c_1 \lambda_n \right\rbrace \quad \textrm{with} \quad \lambda^2_n := \frac{\log p \, (|{\pa}_{j}(\G_{0\pi}^{\un})|+c_s)}{n}.
		\end{align}
		Notice that in order to apply Lemma~\ref{lm:lambda2} to bound the probability of occurrence of $\C_{j\pi}$, we would need $\| u_t \|_2$ to be smaller than a constant $M$, which is not true in general. 
		Hence, we split $\C_{j\pi}$ into two subevents, enabling the utilization of Lemma~\ref{lm:lambda2}.
		More precisely, whenever the following random event holds
		\begin{align*}
		\F_a := \left\lbrace \vert u_t^{(k)}\vert \leq \lambda_n^{-1} K^{1/2 - a}/n, \quad \forall t, k\right\rbrace,
		\end{align*}
		then the $\ell_2$ norm of $u_t$ is bounded as follows: 
		\begin{align*}
		\| u_t \|_2 \leq M_a := \lambda_n^{-1} K^{1 - a} / n,
		\end{align*}
		where $a$ is a free parameter that will be fixed later in the proof.	
		In detail, we bound the probability of $\C_{j\pi}$ according to 
		\begin{equation}\label{E:bayes_theorem}
		\PP(\C_{j\pi}) \leq \PP(\C_{j\pi} \vert \F_a) \PP(\F_a) + \PP(\neg \F_a),
		\end{equation}
		where we use Lemma~\ref{lm:lambda2} to bound $\PP(\C_{j\pi} \vert \F_a)$ and where $\PP(\neg \F_a)$ can be estimated from the chi-squared tail bound~\cite{BEN62}.  
		
		We first focus on bounding $\PP(\C_{j\pi} \vert \F_a)$. 
		For this, we introduce a new variable $\tilde{u}_t$ obtained by truncating the tail of $u_t$. Formally, recalling that $\mathbf{1}\{ \cdot \}$ denotes the indicator function, we have that $\tilde{u}_t := (\tilde{u}_t^{(1)}, \cdots, \tilde{u}_t^{(K)})$ where
		\begin{align*}
		\tilde{u}_t^{(k)} := u_t^{(k)} \mathbf{1} \left\lbrace \vert u_t^{(k)} \vert \leq \lambda_n^{-1} K^{1/2 - a} / n  \right\rbrace - \E\left[ u_t^{(k)} \mathbf{1} \left\lbrace \vert u_t^{(k)} \vert \leq \lambda_n^{-1} K^{1/2 - a} /n \right\rbrace \right].
		\end{align*}
		Notice that whenever the random event $\F_a$ holds, then $u_t$ and $\tilde{u}_t$ follow the same distribution except for a shift $v_t^{(k)} := \E\left[ u_t^{(k)} \mathbf{1} \left\lbrace \vert u_t^{(k)} \vert \leq \lambda_n^{-1} K^{1/2 - a} / n \right\rbrace \right]$. Putting it differently, the distribution of $u_t^{(k)} - \tilde{u}_t^{(k)}$ is a constant $v_t^{(k)}$ whenever we are on the random event $\F_a$.
		This implies that $\left\Vert \sum_{t=1}^n u_t \right\Vert_2 \leq n \;\max_{t, k}\; \vert v_t^{(k)} \vert + \left\Vert \sum_{t=1}^n \tilde{u}_t \right\Vert_2$.
		
		Therefore, if we can guarantee that $n \vert v_t^{(k)}\vert = o(1) \lambda_n$ for all $t$ and $k$, then there must exist a constant $0 < \delta < 1$ such that if we are on the random event $\F_a$, $\left\Vert \sum_{t=1}^n u_t \right\Vert_2 \geq c_1 \lambda_n$ implies that $\left\Vert \sum_{t=1}^n \tilde{u}_t \right\Vert_2 \geq (1 - \delta) c_1 \lambda_n$. Equivalently, we may write
		\begin{align}\label{E:proof_bayes_lambda}
		\PP(\C_{j \pi} \vert \F_a) \PP(\F_a) & \leq \PP\left(\left\Vert \sum_{t=1}^n \tilde{u}_t \right\Vert_2 \geq (1 - \delta) c_1 \lambda_n \Bigg\vert \F_a \right) \PP(\F_a)\\
		& \leq \PP\left(\left\Vert \sum_{t=1}^n \tilde{u}_t \right\Vert_2 \geq (1 - \delta) c_1 \lambda_n\right) \nonumber,
		\end{align}
		where the second inequality follows from Bayes' theorem, and we can bound the last term by applying Lemma~\ref{lm:lambda2} since $\tilde{u}_t$ is bounded by definition. 
		We now show that, indeed, $n \vert v_t^{(k)}\vert = o(1) \lambda_n$ for all $t$ and $k$. From the cumulative tail bound of a chi-squared random variable with one degree of freedom we have that $\PP(u_t^{(k)} \geq l) \leq \exp(- \eta n l/\sqrt{K})$ for some constant $\eta > 0$.
		Based on this, we can estimate the scale of $v_t^{(k)}$ with respect to $p$ and $n$ as
		\begin{align*}
		\begin{split}
		 \vert v_t^{(k)} \vert & \! = \! \left\vert \E\left[ u_t^{(k)} \mathbf{1} \! \left\lbrace \vert u_t^{(k)} \vert \leq \lambda_n^{-1} K^{1/2 - a}/n \right\rbrace \right] \right\vert
		\\
		& \! = \! \left\vert \E\left[ u_t^{(k)} \mathbf{1} \! \left\lbrace \vert u_t^{(k)} \vert > \lambda_n^{-1} K^{1/2 - a}/n \right\rbrace \right] \right\vert \leq \exp \left(- \eta' \lambda_n^{-1} K^{-a} \right)
		\end{split}
		\end{align*}
		for some $0 < \eta' < \eta$, where the second equality follows from the fact that $u_t^{(k)}$ has zero mean.
		Notice that in the last inequality, $u_t^{(k)}$ has been absorbed into the exponential term. 
		As $\vert v_t^{(k)} \vert$ decays exponentially with respect to $\lambda_n^{-1}$, we have that $n \vert v_t^{(k)} \vert = o(1) \lambda_n$. 
		Having justified this, we may now apply Lemma~\ref{lm:lambda2} to the rightmost term in \eqref{E:proof_bayes_lambda} in order to bound $\PP(\C_{j \pi} \vert \F_a) \PP(\F_a)$.
		From the definition of $\tilde{u}_t$ it follows that $\lambda_{\max} \lbrace \textrm{Cov}(\sum_{t=1}^n \tilde{u}_t)\rbrace \leq \lambda_{\max} \lbrace \textrm{Cov}(\sum_{t=1}^n u_t)\rbrace$. Furthermore, since the variables $u_t^{(k)}$ are independently distributed for all $t$, we have that
		\begin{align*}
		\textrm{var}\left(\sum_{t=1}^n u_t^{(k)}\right) = \frac{w_k}{n_k} \textrm{var}\left( \frac{{[\hat{\epsilon}_{j\pi}^{(k)}]}_t^2 - \omega_{0j\pi}^{(k)}}{\omega_{0j\pi}^{(k)}} \right) \leq c_2 /n
		\end{align*}
		for some constant $c_2 > 0$. This also implies that $\lambda_{\max} \lbrace \textrm{Cov}(\sum_{t=1}^n \tilde{u}_t)\rbrace \leq c_2 / n$. 
		Applying Lemma~\ref{lm:lambda2}, where we select $x = (1 - \delta) c_1 \lambda_n$, $\beta = \delta' x$ for some arbitrary positive constant $0 < \delta' < 1$ and, $M = \lambda_n^{-1} K^{1 - a} / n$, it follows that
		\begin{align*}
		\PP\left(\left\Vert \sum_{t=1}^n \tilde{u}_t \right\Vert_2 \geq (1 - \delta) c_1 \lambda_n \right) &
		\leq \exp(-c_3 (n \lambda^2_n - K)) \\
		&\quad + \exp\left( \frac{2}{5} \log K - c_4 K^{a - \frac{7}{2}} n \lambda^2_n \right),
		\end{align*}
		for some constants $c_3, c_4 > 0$ that increase if constant $c_1$ is increased.
		In addition, by choosing $a=7/2$, there must exist a large enough $c_1$ such that $c_3, c_4 > 1$ and therefore
		\begin{align}\label{E:proof_first_term}
		& \PP\left(\left\Vert \sum_{t=1}^n \tilde{u}_t \right\Vert_2 \geq (1 - \delta) c_1 \lambda_n\right)\\
		& \quad \leq  \exp(-n\lambda_n^2) = \exp(-|{\pa}_{j}(\G^{\un})| \log p - c_s \log p).\nonumber
		\end{align}
		Replacing \eqref{E:proof_first_term} into \eqref{E:proof_bayes_lambda} gives us the sought exponential bound for the first summand in~\eqref{E:bayes_theorem}. 
		
		We are now left with the task of finding a bound for $\PP(\neg \F_a)$.
		By relying on the fact that $n^{(1)} \asymp \cdots \asymp n^{(K)}$ (cf. Condition~\ref{cd9}), we get that $(\max_k \; n_k) \, K \leq c_5 n$ for some constant $c_5$ and therefore
		\begin{align*}
		\PP(\neg \F_a) \leq c_5 n \underset{1 \leq k \leq K, 1 \leq l \leq n}{\max} \PP\left\lbrace \vert u_t^{(k)} \vert \geq \lambda_n^{-1} K^{1/2 - a} /n \right\rbrace.
		\end{align*}
		Following this, in order to bound the probability that $\neg \F_a$ holds we further rely on the tail bound of the chi-squared random variable $u_t^{(k)}$ to obtain
		\begin{align*}
		\PP(\neg \F_a)\leq c_5 n \exp \left( - \eta \lambda_n^{-1} K^{-a} \right) = c_5 \exp \left( \log n - \eta \lambda_n^{-1} K^{-a} \right).
		\end{align*}
		Recalling the definition of $\lambda_n$ from \eqref{E:def_lam_e_2}, Condition~\ref{cd6} implies that
		\begin{align}\label{E:two_eq_e_2}
		\frac{\lambda_n^{-1}}{K^{7 / 2}} \geq \log p (|{\pa}_{j}(\G^{\un})| + c_s) / \tilde{\alpha}^{\frac{3}{2}} \quad\textrm{and}\quad \sqrt{\tilde{\alpha}}\frac{\lambda_n^{-1}}{K^{7 / 2}} \geq \log n.
		\end{align}
		By recalling that we have fixed $a = \frac{7}{2}$, 	it follows that there exists a constant $\eta'$ such that
		\begin{align*}
		\PP(\neg \F_a) & \leq c_5 \exp \left(\log n - \eta \lambda_n^{-1} K^{-a} \right) \\
		& \leq c_5 \exp \left(\sqrt{\tilde{\alpha}} \lambda_n^{-1} K^{-a} - \eta \lambda_n^{-1} K^{-a} \right) \leq c_5 \exp \left( - \eta' \lambda_n^{-1} K^{-a} \right),
		\end{align*}
		where we have used the second inequality in~\eqref{E:two_eq_e_2}. Furthermore, by leveraging the first inequality in~\eqref{E:two_eq_e_2} we obtain that
		\begin{align*}
		\PP(\neg \F_a) \leq c_5 \exp \left( - \eta' \lambda_n^{-1} K^{-a} \right) \leq c_5 \exp \left( - \eta' \log p \, (|{\pa}_{j}(\G^{\un})| + c_s) / \tilde{\alpha}^{\frac{3}{2}} \right),
		\end{align*}
		thus obtaining an exponential bound for the second summand in~\eqref{E:bayes_theorem}.
		
		Having found exponential bounds for both summands in~\eqref{E:bayes_theorem}, it follows that for $c_1 > 0$ sufficiently large and $\tilde{\alpha}$ sufficiently small 
		we have that 
\begin{align*}
\PP(\C_{j \pi}) \leq (1 + c_5) \exp(-|{\pa}_{j}(\G^{\un})| \log p - c_s \log p).
\end{align*}	
		Following an argument based on union bounds similar to the one presented in the proof of Theorem~~\ref{thm:lambda1}, we have that
		\begin{align*}
		&\PP \left(\sum_{k=1}^K w_k \left( \frac{\Vert \hat{\epsilon}_{j\pi}^{(k)} \Vert_2^2 / n_k - \omega_{0j\pi}^{(k)}}{\omega_{0j\pi}^{(k)}} \right)^2 \leq c_1 \frac{\log p \, (|{\pa}_{j}(\G^{\un})|+c_s)}{n}, \quad \forall\; j, \pi \right) \\
		& \hspace{1cm} \geq 1 - \PP \left( \bigcup_{j=1}^p \bigcup_{m=1}^p \bigcup_{\pi \in \Pi_j(m)} \C_{j\pi} \right) \geq 1 - (1 + c_5) \exp (-(c_s - 2)\log p)
		\end{align*}
		for some constant $c_1 > 0$. It is immediately implied from the previous expression that
		\begin{align*}
		\PP \! \left(\! \exists \pi :  \sum_{j=1}^p \sum_{k=1}^K w_k \left( \frac{\Vert \hat{\epsilon}_{j\pi}^{(k)} \Vert_2^2 / n_k - {\omega_0}_{j\pi}^{(k)}}{{\omega_0}_{j\pi}^{(k)}} \right)^2 \!\! \! \geq \! c_1 \frac{c_s p \log p + \vert \G_{0\pi}^\un\vert \log p}{n} \right) \!\! \\
		\leq \! (1 + c_5) \exp (- (c_s - 2) \log p),
		\end{align*}
		thus recovering the statement of the theorem (since $c_s>2$ by Assumption~\ref{cd6}) after accordingly renaming the constants on the right-hand side.
	\end{proof}
	
	\subsubsection{Random event $\Ev_3$}\label{Sss:random_event_3}
	
	Event $\Ev_3$ is defined as the intersection of $2K$ events that we denote by
	$\{\Ev^{(k)}_{3a}\}_{k=1}^K$ and $\{\Ev^{(k)}_{3b}\}_{k=1}^K$, where events
	$\Ev^{(k)}_{3a}$ and $\Ev^{(k)}_{3b}$ are specific to the $k$th SEM.
	More specifically, event $\Ev^{(k)}_{3a}$ ensures that all the estimated noise variances $\hat{\omega}_j^{(k)}$ associated with the $k$th SEM are finite and bounded away from zero. Formally, we define the following events for $k=1, \ldots, K$:
	\begin{equation}\label{E:event_3a}
	\Ev^{(k)}_{3a} := \left\{ \min\left(\hat{\omega}_j^{(k)}, 1/\hat{\omega}_j^{(k)}\right) \geq 1/\beta^2, \quad \mathrm{for} \,\, j = 1, \ldots, p \right\},
	\end{equation}
	for some $\beta > 0$.
	Event $\Ev^{(k)}_{3b}$ imposes a universal lower bound on the norm achievable by any linear combinations of the data associated with the $k$-th DAG.
	Mathematically, we consider the ensuing events for $k = 1, \ldots, K$: 
	\begin{equation}\label{E:event_3b}
	\Ev^{(k)}_{3b} := \left\{  \Vert \hat{X}^{(k)} v \Vert_2 / \sqrt{n_k} \geq \left(\delta_3 - \lambda_3^{(k)} \sqrt{\Vert v \Vert_0} \right) \Vert v \Vert_2, \quad \forall v \in \R^p \right\},
	\end{equation}
	for some $\delta_3 > 0$ and $\lambda_3^{(k)} \asymp \sqrt{(\log p)/n_k}$.
	Based on \eqref{E:event_3a} and \eqref{E:event_3b} we define events $\Ev^{(k)}_{3} := \Ev^{(k)}_{3a} \cap \Ev^{(k)}_{3b}$, and
	\begin{equation}\label{E:event_3}
	\Ev_3 := \bigcap_{k=1}^K \Ev^{(k)}_{3}.
	\end{equation}
	Leveraging the fact that Condition~\ref{cd5} enforces the maximum in-degree of each $\G_{0 \pi}^{(k)}$ to be at most $\alpha n_k / \log p$ for some positive constant $\alpha$, we can generalize Lemmas~7.5 and~7.7 from~\cite{vandegeer2013} into the following lemma.
	\begin{lemma}
		\cite[Lemmas~7.5 and~7.7]{vandegeer2013}
		\label{lm:lambda03}
		Assume Conditions~\ref{cd3},~\ref{cd4},~\ref{cd5} and~\ref{cd6} hold and that
		\begin{align*}
		3 \sqrt{\Lambda_{\min}} / 4 - \sqrt{\frac{2(t+ \log p)}{n}} - 3 \sigma_0 \sqrt{\alpha + \tilde{\alpha}} \geq 1 / \beta > 0,
		\end{align*}
		for some $t > 0$. Based on this, define
		\begin{align*}
		{\lambda_3^{(k)}} := 3 \sigma_0 \sqrt{\frac{\log p}{n_k}}, \quad \mathrm{and} \quad \delta_3 := 3 \sqrt{\Lambda_{\min}} / 4 - \sqrt{\frac{2(t+\log p)}{n_k}}.
		\end{align*}
		Then $\PP( \Ev^{(k)}_{3} ) \geq 1 - 5 \exp(-t)$
		and on $\Ev^{(k)}_{3}$ it holds that
		\begin{align}\label{eq:frobenius}
		\Vert \hat{X}^{(k)} (a_j^{(k)} - a_{0j\hat{\pi}}^{(k)})\Vert_2 / \sqrt{n_k} \geq \Vert a_j^{(k)} - a_{0j\hat{\pi}}^{(k)} \Vert_2 / \beta^2.
		\end{align}
	\end{lemma}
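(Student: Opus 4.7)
The plan is to establish the two sub-events $\Ev_{3a}^{(k)}$ and $\Ev_{3b}^{(k)}$ independently, union-bound to obtain the probability guarantee on $\Ev_3^{(k)}$, and then combine the two guarantees to derive the Frobenius-type bound \eqref{eq:frobenius}. This mirrors the strategy of \cite{vandegeer2013}, but adapted so that the noise and design terms are controlled per DAG $k$ with the per-DAG sample size $n_k$.

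First I would handle $\Ev_{3b}^{(k)}$, which is essentially a sparse/restricted eigenvalue statement for the sample Gram matrix $\hat{X}^{(k)T}\hat{X}^{(k)}/n_k$. Because $X^{(k)}$ is multivariate Gaussian with covariance $\Sigma_0^{(k)}$ and $\Lambda_{\min}(\Sigma_0^{(k)})\geq \Lambda_{\min}$ by Condition~\ref{cd4}, I would use a standard $\varepsilon$-net / Davidson--Szarek-type bound on all sparse unit vectors, combined with a peeling argument over the support size, to show that with probability at least $1-4\exp(-t)$,
\begin{align*}
\Vert \hat{X}^{(k)} v \Vert_2 / \sqrt{n_k} \;\geq\; \sqrt{\Lambda_{\min}}\,\Vert v \Vert_2 \,-\, \bigl(\tfrac{\sqrt{\Lambda_{\min}}}{4}+ \sqrt{2(t+\log p)/n_k}\bigr) \Vert v \Vert_2 \,-\, 3\sigma_0\sqrt{\log p/n_k}\,\sqrt{\Vert v\Vert_0}\,\Vert v \Vert_2,
\end{align*}
which is exactly the defining inequality of $\Ev_{3b}^{(k)}$ with the stated $\delta_3$ and $\lambda_3^{(k)}$. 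The upper variance bound $\sigma_0^2$ from Condition~\ref{cd3} controls the sub-Gaussian norm of rows of $\hat{X}^{(k)}$, while Condition~\ref{cd4} provides the population lower bound that the concentration perturbs.

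Next, for $\Ev_{3a}^{(k)}$, I would argue that the estimated noise variance $\hat\omega_j^{(k)}$ (the residual variance when regressing $X_j^{(k)}$ onto its estimated parents) is bounded on both sides by $\beta^2$ and $\beta^{-2}$. The upper bound on $\hat\omega_j^{(k)}$ is straightforward from Condition~\ref{cd3} (the residual variance is dominated by the marginal variance $\sigma_0^2$) plus a one-sided chi-squared tail. The lower bound is the delicate part: the residual variance after regressing on at most $d+\tilde\alpha$-many covariates cannot collapse below a constant thanks to Condition~\ref{cd4} and the maximum in-degree control of Conditions~\ref{cd5}--\ref{cd6}, which keep $3\sigma_0\sqrt{\alpha+\tilde\alpha}$ below $3\sqrt{\Lambda_{\min}}/4$ as required in the hypothesis. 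Quantitatively, a chi-squared tail bound with a $\sqrt{2(t+\log p)/n_k}$ slack and a union bound over the $p$ nodes delivers the remaining $1-\exp(-t)$ probability, so that $\PP(\Ev_3^{(k)})\geq 1-5\exp(-t)$.

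Finally, on $\Ev_3^{(k)}$ I would derive \eqref{eq:frobenius} by specializing $\Ev_{3b}^{(k)}$ to $v = a_j^{(k)}-a_{0j\hat\pi}^{(k)}$. This $v$ has at most $2d$ non-zero entries by the in-degree constraint, and Condition~\ref{cd5} gives $\lambda_3^{(k)}\sqrt{\Vert v\Vert_0} \leq 3\sigma_0\sqrt{2\alpha}$. Combining this with the assumed lower bound $3\sqrt{\Lambda_{\min}}/4 - \sqrt{2(t+\log p)/n}-3\sigma_0\sqrt{\alpha+\tilde\alpha} \geq 1/\beta$ and the lower noise-variance control from $\Ev_{3a}^{(k)}$ yields $\delta_3-\lambda_3^{(k)}\sqrt{\Vert v\Vert_0}\geq 1/\beta^2$, which is \eqref{eq:frobenius}. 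The main obstacle I anticipate is the sparse eigenvalue inequality in the first step: the uniform control over all sparse $v$ requires the $\varepsilon$-net argument to be carried out cleanly enough that the extracted constants match the specific forms of $\delta_3$ and $\lambda_3^{(k)}$ stated in the lemma, and that is where the bookkeeping is heaviest.
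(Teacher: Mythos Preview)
The paper does not actually supply a proof of this lemma: it is stated as a direct adaptation of Lemmas~7.5 and~7.7 in \cite{vandegeer2013}, with the only change being that the in-degree bounds from Conditions~\ref{cd5} and~\ref{cd6} are applied with the per-DAG sample size $n_k$ in place of $n$. Your proposal is a correct reconstruction of that argument---a restricted-eigenvalue concentration step for $\Ev_{3b}^{(k)}$ via an $\varepsilon$-net over sparse directions, a chi-squared tail plus union bound for $\Ev_{3a}^{(k)}$, and specialization to $v=a_j^{(k)}-a_{0j\hat\pi}^{(k)}$ for \eqref{eq:frobenius}---so there is essentially nothing to compare against.

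One bookkeeping correction: when bounding $\Vert v\Vert_0$ you write ``at most $2d$'' and then derive $\lambda_3^{(k)}\sqrt{\Vert v\Vert_0}\leq 3\sigma_0\sqrt{2\alpha}$, yet the hypothesis of the lemma features $3\sigma_0\sqrt{\alpha+\tilde\alpha}$. The support of $a_{0j\hat\pi}^{(k)}$ is not constrained by $d$ but by $|\pa_j(\G_{0\hat\pi}^{\un})|$, which is controlled through $\tilde\alpha$ (Condition~\ref{cd6}), not $\alpha$; the correct count is $\Vert v\Vert_0\leq d+|\pa_j(\G_{0\hat\pi}^{\un})|$, whence the $\sqrt{\alpha+\tilde\alpha}$ in the hypothesis. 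Relatedly, the passage from the assumed lower bound $\geq 1/\beta$ to the conclusion $\geq 1/\beta^2$ in \eqref{eq:frobenius} is not cleanly explained (you invoke $\Ev_{3a}^{(k)}$ but do not say how a noise-variance bound converts $1/\beta$ into $1/\beta^2$); in \cite{vandegeer2013} this step is handled explicitly, so you would want to track it rather than absorb it into a final sentence.
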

	
	Lemma~\ref{lm:lambda03} shows that under certain conditions the events $\Ev^{(k)}_{3}$ hold with high probability, thus playing a role analogous to that of Theorem~\ref{thm:lambda1} for event $\Ev_{1}$ and Theorem~\ref{thm:lambda2} for event $\Ev_{2}$.
	
	With the events $\Ev_{1}$, $\Ev_{2}$, and $\Ev_{3}$ defined and having shown under which conditions these hold with high probability, in the next section we leverage these events to prove Theorem~\ref{thm:l0}.
	
	\subsection{Proof of Theorem~\ref{thm:l0}} \label{sec:l0proof}
	
	\subsubsection{Bounds on new probability space}
	
	Through direct manipulation of the likelihood function, in Lemma~\ref{lem:bounds} we show that the global optimum $(\hat{A}^{(k)}, \hat{\Omega}^{(k)})_{k=1}^K$ converges to the SEMs $(A_{0 \hat{\pi}}^{(k)}, \Omega_{0 \hat{\pi}}^{(k)})_{k=1}^K$, where $\hat{\pi}$ is some permutation consistent with the estimated adjacency matrices $\hat{A}^{(1)}, \cdots, \hat{A}^{(K)}$.

	\begin{lemma} \label{lem:bounds}
		Assume we are on $\Ev_1 \cap \Ev_2 \cap \Ev_3$ and Condition~\ref{cd3} holds. Consider a regularizer in \eqref{obj:jointl0} satisfying $\lambda^2 > \lambda_1^2/\delta_1 + \lambda_2^2 / \delta_2$ with $0 < \delta_1 < 1 / \beta^2$ and $0 < \delta_2 < 1 / (2 \beta^2 \sigma_0^2)$. Then, $\left\{\hat{\pi}, \{(\hat{A}^{(k)}, \hat{\Omega}^{(k)})\}_{k=1}^K \right\}$ the global optimum of \eqref{obj:jointl0}, satisfies
		\begin{align}\label{eq:bound4}
		& \left( \frac{1}{\beta^2} - \delta_1\right) \sum_{j=1}^p \sum_{k=1}^K w_k \Vert X^{(k)} (\hat{a}_j^{(k)} - a_{0j\hat{\pi}}^{(k)}) \Vert_2^2 / n_k \\
		&\quad + \left( \frac{1}{2\beta^4\sigma_0^4} - \delta_2\right) \sum_{j=1}^p \sum_{k=1}^K w_k \left( \frac{\hat{\omega}_j^{(k)} - {\omega_0}_j^{(k)}}{\hat{\omega}_j^{(k)}} \right)^2 \nonumber \\
		&\quad + \left( \lambda^2 - \frac{\lambda_1^2}{\delta_1} - \frac{\lambda_2^2}{\delta_2} \right) \vert \hat{\G} \vert \leq \lambda^2 \vert \G_0^\un \vert + \frac{\lambda_2^2 (p + \vert \G_{0\hat{\pi}}^\un \vert)}{\delta_2} + \frac{\lambda_1^2 \vert \G_{0\hat{\pi}}^\un \vert}{\delta_1}. \nonumber
		\end{align}
	\end{lemma}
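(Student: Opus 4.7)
My plan is to turn the optimality condition for \eqref{obj:jointl0} directly into \eqref{eq:bound4} by decomposing the resulting likelihood difference into a quadratic term, a cross term, and a variance term, and then invoking $\Ev_1$, $\Ev_2$, $\Ev_3$ in turn. The natural comparison point is $(\hat\pi, \{(A_{0\hat\pi}^{(k)}, \Omega_{0\hat\pi}^{(k)})\})$: feasibility follows from Conditions~\ref{cd5} and~\ref{cd6} via the in-degree bound. The key structural observation is that $(A_{0\hat\pi}^{(k)}, \Omega_{0\hat\pi}^{(k)})$ and $(A_0^{(k)}, \Omega_0^{(k)})$ both realize the same precision matrix $\Theta_0^{(k)}$, so by \eqref{eq:likelihood} they yield identical sample log-likelihoods. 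Comparing to the feasible point $(\pi_0, \{(A_0^{(k)}, \Omega_0^{(k)})\})$ to obtain the penalty term $\lambda^2 |\G_0^\un|$, while re-expressing the likelihood at $A_{0\hat\pi}^{(k)}$ to enable a residual decomposition aligned with $\Ev_1$, gives
\begin{align*}
\lambda^2 |\hat\G| \;\leq\; \lambda^2 |\G_0^\un| + \sum_{k=1}^K w_k \bigl[\ell_{n_k}(\hat X^{(k)}; \hat A^{(k)}, \hat\Omega^{(k)}) - \ell_{n_k}(\hat X^{(k)}; A_{0\hat\pi}^{(k)}, \Omega_{0\hat\pi}^{(k)})\bigr].
\end{align*}

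Next I would expand each $\ell_{n_k}$ column-wise via \eqref{eq:likelihood} as $-\sum_j \|\hat X_j^{(k)} - \hat X^{(k)} a_j^{(k)}\|_2^2/(n_k\omega_j^{(k)}) - \sum_j \log\omega_j^{(k)}$, write $\hat X_j^{(k)} - \hat X^{(k)} \hat a_j^{(k)} = \hat\epsilon_{j\hat\pi}^{(k)} - \hat X^{(k)}(\hat a_j^{(k)} - a_{0j\hat\pi}^{(k)})$, and expand the square. The likelihood difference then splits into three families: a negative quadratic $-\sum_{j,k}(w_k/(n_k\hat\omega_j^{(k)}))\|\hat X^{(k)}(\hat a_j^{(k)} - a_{0j\hat\pi}^{(k)})\|_2^2$, a cross term $2\sum_{j,k}(w_k/(n_k\hat\omega_j^{(k)}))\hat\epsilon_{j\hat\pi}^{(k)T}\hat X^{(k)}(\hat a_j^{(k)} - a_{0j\hat\pi}^{(k)})$, and a variance term $V := \sum_{j,k} w_k[\|\hat\epsilon_{j\hat\pi}^{(k)}\|_2^2/(n_k\omega_{0j\hat\pi}^{(k)}) - \|\hat\epsilon_{j\hat\pi}^{(k)}\|_2^2/(n_k\hat\omega_j^{(k)}) + \log\omega_{0j\hat\pi}^{(k)} - \log\hat\omega_j^{(k)}]$. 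Moving the negative quadratic to the left-hand side and applying $1/\hat\omega_j^{(k)} \geq 1/\beta^2$ from $\Ev_{3a}^{(k)}$ produces the $(1/\beta^2)\sum (w_k/n_k)\|\hat X^{(k)}(\cdot)\|_2^2$ factor in \eqref{eq:bound4}. The cross term is bounded by invoking $\Ev_1$ at permutation $\hat\pi$ with matrices $\{\hat A^{(k)}\}$, which yields $\delta_1 \sum (w_k/n_k)\|\hat X^{(k)}(\cdot)\|_2^2 + \lambda_1^2(|\hat\G| + |\G_{0\hat\pi}^\un|)/\delta_1$; the $|\hat\G|/\delta_1$ contribution is absorbed into the $(\lambda^2 - \lambda_1^2/\delta_1)|\hat\G|$ term on the left-hand side.

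The main obstacle is the variance term $V$. Setting $\Delta_j^{(k)} := (\hat\omega_j^{(k)} - \omega_{0j\hat\pi}^{(k)})/\hat\omega_j^{(k)}$, I would Taylor-expand $\log(\omega_{0j\hat\pi}^{(k)}/\hat\omega_j^{(k)}) = \log(1 - \Delta_j^{(k)})$ to second order and combine it with $1/\omega_{0j\hat\pi}^{(k)} - 1/\hat\omega_j^{(k)} = \Delta_j^{(k)}/(\omega_{0j\hat\pi}^{(k)})$ to rewrite each summand of $V$ as a combination of $\Delta_j^{(k)}$, $(\Delta_j^{(k)})^2$, and the centered empirical variance $\tilde\Delta_j^{(k)} := (\|\hat\epsilon_{j\hat\pi}^{(k)}\|_2^2/n_k - \omega_{0j\hat\pi}^{(k)})/\omega_{0j\hat\pi}^{(k)}$. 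The pure quadratic piece, together with $\hat\omega_j^{(k)} \leq \beta^2$ from $\Ev_{3a}^{(k)}$ and $\omega_{0j\hat\pi}^{(k)} \leq \sigma_0^2$ from Condition~\ref{cd3}, furnishes the coefficient $1/(2\beta^4\sigma_0^4)$ in \eqref{eq:bound4}. The remaining linear coupling $\Delta_j^{(k)} \cdot \tilde\Delta_j^{(k)}$ is decoupled by AM-GM with parameter $\delta_2$, producing $\delta_2 \sum w_k (\Delta_j^{(k)})^2 + (1/\delta_2) \sum w_k (\tilde\Delta_j^{(k)})^2$; applying $\Ev_2$ to the latter supplies the $\lambda_2^2(p + |\G_{0\hat\pi}^\un|)/\delta_2$ contribution. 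Collecting the three bounds and regrouping yields \eqref{eq:bound4}. The most delicate step is the book-keeping of constants in the Taylor remainder so that the $\Delta^2$ coefficient matches $1/(2\beta^4\sigma_0^4)$ exactly; this book-keeping is also what forces the parameter restrictions $\delta_1 < 1/\beta^2$ and $\delta_2 < 1/(2\beta^2\sigma_0^2)$ stated in the lemma, ensuring the coefficients of the two quadratic blocks on the left-hand side remain strictly positive.
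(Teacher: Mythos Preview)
Your proposal is correct and follows essentially the same route as the paper's proof: start from the optimality inequality, use that $(A_0^{(k)},\Omega_0^{(k)})$ and $(A_{0\hat\pi}^{(k)},\Omega_{0\hat\pi}^{(k)})$ yield identical likelihoods, expand the squared residual via $\hat X_j^{(k)}-\hat X^{(k)}\hat a_j^{(k)}=\hat\epsilon_{j\hat\pi}^{(k)}-\hat X^{(k)}(\hat a_j^{(k)}-a_{0j\hat\pi}^{(k)})$, Taylor-expand the log term using the ratio bound $\omega_{0j\hat\pi}^{(k)}/\hat\omega_j^{(k)}\le\beta^2\sigma_0^2$, then control the cross term with $\Ev_1$ and the variance coupling with Cauchy--Schwarz/AM--GM plus $\Ev_2$. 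The paper organizes the algebra slightly differently (it first isolates the log inequality \eqref{eq:bounds1} before substituting the residual expansion), but the decomposition, the three events, and the constants arise in exactly the same way.
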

	
	\begin{proof}
		By definition, the global optimum must satisfy
		\begin{align} \label{eq:globalobj}
		\sum_{k=1}^K w_k \ell_{n_k}(\hat{X}^{(k)}; \hat{A}^{(k)}, \hat{\Omega}^{(k)}) - \lambda^2 \vert \hat{\G} \vert \geq \sum_{k=1}^K w_k \ell_{n_k}(\hat{X}^{(k)}; A_0^{(k)}, \Omega_0^{(k)}) - \lambda^2 \vert \G_0^\un \vert.
		\end{align}
		Let $\hat{\pi}$ denote any permutation consistent with all $\hat{A}^{(k)}$. 
		Since the value of the likelihood $\ell_{n_k}(\hat{X}^{(k)}; A_0^{(k)}, \Omega_0^{(k)})$ is completely determined by the precision matrices $\{\Theta_0^{(k)}\}_{k=1}^K$, it then follows that the likelihood function $\ell_{n_k}(\hat{X}^{(k)}; A_0^{(k)}, \Omega_0^{(k)})$ and the function $\ell_{n_k}(\hat{X}^{(k)}; A_{0\hat{\pi}}^{(k)}, \Omega_{0\hat{\pi}}^{(k)})$ achieve the same value. 
		We therefore replace the former by the latter in \eqref{eq:globalobj} and expand the definition of the likelihood function in \eqref{eq:likelihood} to obtain
		\begin{align*}
		& p + \sum_{j=1}^p \sum_{k=1}^K w_k \log \hat{\omega}_j^{(k)} + \lambda^2 \vert \hat{\G} \vert \\
		&\quad \leq \sum_{j=1}^p \sum_{k=1}^K w_k \frac{\Vert \hat{\epsilon}_{j\hat{\pi}}^{(k)} \Vert_2^2 / n_k}{\omega_{0j\hat{\pi}}^{(k)}} + \sum_{j=1}^p \sum_{k=1}^K w_k \log \omega_{0j\hat{\pi}}^{(k)} + \lambda^2 \vert \G_0^\un \vert.
		\end{align*}
		Basic manipulations transform the above expression into the following inequality 
		\begin{align}\label{eq:bounds1}
		\sum_{j=1}^p \sum_{k=1}^K w_k \log \left( \frac{\hat{\omega}_j^{(k)}}{\omega_{0j\hat{\pi}}^{(k)}} \right) + \lambda^2 \vert \hat{\G} \vert \leq \sum_{j=1}^p \sum_{k=1}^K w_k \left( \frac{\Vert \hat{\epsilon}_{j\hat{\pi}}^{(k)} \Vert_2^2 / n_k}{\omega_{0j\hat{\pi}}^{(k)}} - 1 \right) + \lambda^2 \vert \G_0^\un \vert.
		\end{align}
		Since we are on $\Ev_3$, we have that $1 / \hat{\omega}_j^{(k)} \leq \beta^2$ [cf.~\eqref{E:event_3a}]. 
		By combining this with Condition~\ref{cd3} we can further bound $\omega_{0j\hat{\pi}}^{(k)} / \hat{\omega}_j^{(k)} \leq \beta^2 \sigma_0^2$. Then, using the Taylor expansion $\log (1+x) \leq x - x^2 / (2(1+t)^2)$, for $-1 < x \leq t$, we can further replace $\log \left( \frac{\hat{\omega}_j^{(k)}}{\omega_{0j\hat{\pi}}^{(k)}} \right)$ in \eqref{eq:bounds1} to obtain
		\begin{align}\label{eq:bounds1_11}
		\sum_{j=1}^p \sum_{k=1}^K w_k & \left( \frac{\hat{\omega}_j^{(k)} - \omega_{0j\hat{\pi}}^{(k)}}{\hat{\omega}_j^{(k)}}\right) + \frac{1}{2\beta^4 \sigma_0^4} \left( \frac{\omega_{0j\hat{\pi}}^{(k)}}{\hat{\omega}_j^{(k)}} - 1 \right) ^2 + \lambda^2 \vert \hat{\G} \vert \nonumber\\
		& \leq \sum_{j=1}^p \sum_{k=1}^K w_k \left( \frac{\Vert \hat{\epsilon}_{j\hat{\pi}}^{(k)} \Vert_2^2 / n_k}{\omega_{0j\hat{\pi}}^{(k)}} - 1 \right) + \lambda^2 \vert \G_0^\un \vert.
		\end{align}
		Finally, using the fact that $\hat{X}_j^{(k)} = \hat{\epsilon}_{j\hat{\pi}}^{(k)} + \hat{X}^{(k)} a_{0j\hat{\pi}}^{(k)}$, we also rewrite $\hat{\omega}_j^{(k)}$ as
		\begin{align*}
		& \hat{\omega}_j^{(k)} = \Vert \hat{X}_j^{(k)} - \hat{X}^{(k)} \hat{a}_j^{(k)} \Vert_2^2 / n_k\\
		& = \Vert \hat{X}^{(k)} (\hat{a}_j^{(k)} - a_{0j\hat{\pi}}^{(k)}) \Vert_2^2 / n_k - 2 \hat{\epsilon}_{j\hat{\pi}}^{(k)T} \hat{X}^{(k)} (\hat{a}_j^{(k)} - a_{0j\hat{\pi}}^{(k)}) / n_k+ \Vert \hat{\epsilon}_j^{(k)} \Vert_2^2 / n_k.
		\end{align*}
		By replacing the above into \eqref{eq:bounds1_11}, we get that
		\begin{align}\label{eq:eqt3}
		& \sum_{j=1}^p \sum_{k=1}^K w_k \frac{\Vert \hat{X}^{(k)} (\hat{a}_j^{(k)} - a_{0j\hat{\pi}}^{(k)}) \Vert_2^2 / n_k}{\hat{\omega}_j^{(k)}} + \frac{1}{2\beta^4\sigma_0^4}\sum_{j=1}^p \sum_{k=1}^K w_k \left( \frac{\hat{\omega}_j^{(k)} - \omega_{0j\hat{\pi}}^{(k)}}{\hat{\omega}_j^{(k)}} \right)^2 \!\!\!+\! \lambda^2 \vert \hat{\G} \vert \nonumber\\
		& \hspace{0.5cm}\leq 2\sum_{j=1}^p \sum_{k=1}^K w_k \frac{\hat{\epsilon}_{j\hat{\pi}}^{(k)T} \hat{X}^{(k)} (\hat{a}_j^{(k)} -  a_{0j\hat{\pi}}^{(k)}) / n_k}{\hat{\omega}_j^{(k)}} + \sum_{j=1}^p \sum_{k=1}^K w_k \left( \frac{\Vert \hat{\epsilon}_{j\hat{\pi}}^{(k)} \Vert_2^2 / n_k}{\omega_{0j\hat{\pi}}^{(k)}} - 1 \right) \nonumber \\
		& \hspace{1cm}- \sum_{j=1}^p \sum_{k=1}^K w_k \left( \frac{\Vert \hat{\epsilon}_{j\hat{\pi}}^{(k)} \Vert_2^2 / n_k - \omega_{0j\hat{\pi}}^{(k)}}{\hat\omega_j^{(k)}} \right) + \lambda^2 \vert \G_0^\un \vert.
		\end{align}
		In order to further bound the expression in \eqref{eq:eqt3}, notice that the first summand in the right hand side of the inequality corresponds to the sum of all empirical correlation coefficients.
		Leveraging that we are under the assumption that $\Ev_1$ holds [cf.~\eqref{E:event_E_1}], we have that
		\begin{align}\label{E:bound_first_summand}
		2\sum_{j=1}^p \sum_{k=1}^K w_k & \frac{\hat{\epsilon}_{j\hat{\pi}}^{(k)T} \hat{X}^{(k)} (\hat{a}_j^{(k)} -  a_{0j\hat{\pi}}^{(k)}) / n_k}{\hat{\omega}_j^{(k)}} \nonumber\\
		& \leq \delta_1\sum_{j=1}^p \sum_{k=1}^K w_k \Vert \hat{X}^{(k)} (\hat{a}_j^{(k)} - a_{0j\hat{\pi}}^{(k)}) \Vert_2^2 / n_k + \frac{\lambda_1^2}{\delta_1} \vert \hat{\G} \vert.
		\end{align}
		In order to bound the second and third terms, we first restate their difference as follows
		\begin{align}\label{eq:eqt2}
		\sum_{j=1}^p \sum_{k=1}^K w_k & \! \left( \! \frac{\Vert \hat{\epsilon}_{j\hat{\pi}}^{(k)} \Vert_2^2 / n_k - \omega_{0j\hat{\pi}}^{(k)}}{\hat\omega_j^{(k)}} \! \right) \!- \!w_k \left( \! \frac{\Vert \hat{\epsilon}_{j\hat{\pi}}^{(k)} \Vert_2^2 / n_k}{\omega_{0j\hat{\pi}}^{(k)}} - 1 \! \right) \nonumber \\
		& = \! \sum_{j=1}^p \sum_{k=1}^K w_k \! \left(\! \frac{\Vert \hat{\epsilon}_{j\hat{\pi}}^{(k)} \Vert_2^2 / n_k - \omega_{0j\hat{\pi}}^{(k)}}{\omega_{0j\hat{\pi}}^{(k)}} \! \right) \!\!\! \left(\! \frac{\omega_{0j\hat{\pi}}^{(k)} - \hat\omega_j^{(k)}}{\hat\omega_j^{(k)}} \! \right)\!\!.
		\end{align}
Next, by using Cauchy-Schwarz inequality, i.e. 
\begin{align*}
\vert \sum_{i=1}^n u_i v_i \vert^2 \leq \sum_{j=1}^n \vert u_j\vert^2 \sum_{k=1}^n \vert v_k\vert^2,
\end{align*}		
we further bound~\eqref{eq:eqt2} as
		\begin{align}\label{eq:eqt2_22}
		\begin{split}
		& \left\vert \sum_{j=1}^p \sum_{k=1}^K w_k \left( \frac{\Vert \hat{\epsilon}_{j\hat{\pi}}^{(k)} \Vert_2^2 / n_k - \omega_{0j\hat{\pi}}^{(k)}}{\hat\omega_j^{(k)}} \right) - \sum_{j=1}^p \sum_{k=1}^K w_k \left( \frac{\Vert \hat{\epsilon}_{j\hat{\pi}}^{(k)} \Vert_2^2 / n_k}{\omega_{0j\hat{\pi}}^{(k)}} - 1 \right) \right\vert \\
		& \, \leq \!\! \left( \sum_{j=1}^p \sum_{k=1}^K w_k \left( \frac{\Vert \hat{\epsilon}_{j\hat{\pi}}^{(k)} \Vert_2^2 / n_k - \omega_{0j\hat{\pi}}^{(k)}}{\omega_{0j\hat{\pi}}^{(k)}} \right)^2 \right)^{1 / 2} \left( \sum_{j=1}^p \sum_{k=1}^K w_k \left( \frac{ \omega_{0j\hat{\pi}}^{(k)} - \hat\omega_j^{(k)}}{\hat\omega_j^{(k)}} \right)^2 \right)^{1 / 2}.
		\end{split}
		\end{align}
		From the fact that event $\Ev_2$ holds [cf.~\eqref{E:event_2}], we can upper bound the first of the two factors in the right-hand side of \eqref{eq:eqt2_22} by $2\sqrt{ \lambda_2^2 (p + \vert \G_0^\un (\hat{\pi})\vert)}$. 
		Further, relying on the inequality $2ab \leq a^2 /\delta_2 + \delta_2 b^2$ for any $\delta_2 > 0$, it follows that
		\begin{align}\label{eq:eqt2_33}
		&\left\vert \sum_{j=1}^p \sum_{k=1}^K w_k \left( \frac{\Vert \hat{\epsilon}_{j\hat{\pi}}^{(k)} \Vert_2^2 / n_k - \omega_{0j\hat{\pi}}^{(k)}}{\hat\omega_j^{(k)}} \right) - \sum_{j=1}^p \sum_{k=1}^K w_k \left( \frac{\Vert \hat{\epsilon}_{j\hat{\pi}}^{(k)} \Vert_2^2 / n_k}{\omega_{0j\hat{\pi}}^{(k)}} - 1 \right) \right\vert \\
		& \qquad\qquad \qquad \qquad \leq \frac{\lambda_2^2 (p + \vert \G_0^\un (\hat{\pi})\vert)}{\delta_2} + \delta_2 \sum_{j=1}^p \sum_{k=1}^K w_k \left( \frac{\omega_{0j\hat{\pi}}^{(k)} - \hat\omega_j^{(k)}}{\hat{\omega}_j^{(k)}} \right)^2. \nonumber
		\end{align}
		By replacing~\eqref{E:bound_first_summand} and \eqref{eq:eqt2_33} into \eqref{eq:eqt3}, we recover \eqref{eq:bound4}, as we wanted to show.
	\end{proof}
	
	From Lemma~\ref{lem:bounds} it follows that the global optimum of \eqref{obj:jointl0} corresponds to a minimal I-MAP, but no claim is made about the sparsity level of this I-MAP. In order to show that the solution is indeed sparse, we must rely on Conditions~\ref{cd7} and~\ref{cd8}.
	In Lemma~\ref{lm:betamin} we show that $\vert \G_{0\hat{\pi}}^\un \vert$ cannot be much larger than $\vert \hat{\G} \vert$. Then, in Thm.~\ref{thm:betamin} we further show how to cancel out $\vert \G_{0\hat{\pi}}^\un \vert$ with $\vert \hat{\G} \vert$ in \eqref{eq:bound4} to obtain our main result. 
	
	\begin{lemma} \label{lm:betamin}
		Assume Condition~\ref{cd7} holds and let $\tilde{\lambda}>0$ be such that 
		\begin{align} \label{eq:betamin}
		\sum_{k=1}^K w_k \Vert \hat{A}^{(k)} - A_{0\hat{\pi}}^{(k)} \Vert_F^2 \leq \tilde{\lambda}^2 \vert \G_{0\hat{\pi}}^\un \vert.
		\end{align}
		Consider constants $\eta_1, \eta_2$ with $0 \leq \eta_1 < 1$ and $0 < \eta_2^2 c_t < 1 - \eta_1$ such that $\sum_{i,j} \mathbf{1} \left\{ \left| [A_{0\hat{\pi}}^{(k)}]_{i,j} \right| \geq \tilde{\lambda} / \eta_2 \right\} \geq (1 - \eta_1) \vert \G_{0\hat{\pi}}^{(k)}\vert$. Then, it follows that
		\begin{align*}
		\vert \hat{\G} \vert \geq \frac{1 - \eta_1 - \eta_2^2 c_t}{c_t} \vert \G_{0\hat{\pi}}^\un\vert.
		\end{align*}
	\end{lemma}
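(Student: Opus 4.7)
The plan is to quantify how much Frobenius error the estimator must incur whenever it fails to recover a ``strong'' edge (i.e., one whose magnitude exceeds $\tilde{\lambda}/\eta_2$), and then combine that with hypothesis~\eqref{eq:betamin} and Condition~\ref{cd7} to lower-bound $|\hat{\G}|$.

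\textbf{Step 1: Identify strong edges.} For each $k$, define $S_k := \{(i,j) : |[A_{0\hat{\pi}}^{(k)}]_{ij}| \geq \tilde{\lambda}/\eta_2\}$. By the hypothesis of the lemma, $|S_k| \geq (1-\eta_1)|\G_{0\hat{\pi}}^{(k)}|$.

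\textbf{Step 2: Strong-but-missed edges contribute a fixed amount to the Frobenius error.} Let $T_k := S_k \setminus \hat{\G}$ denote the strong edges of $A_{0\hat{\pi}}^{(k)}$ that are absent from $\hat{\G} = \bigcup_{k'} \hat{\G}^{(k')}$. Since every entry of $\hat{A}^{(k)}$ on $T_k$ must vanish, each $(i,j) \in T_k$ contributes at least $\tilde{\lambda}^2/\eta_2^2$ to $\Vert \hat{A}^{(k)} - A_{0\hat{\pi}}^{(k)}\Vert_F^2$. Hence
\begin{align*}
\sum_{k=1}^{K} w_k \, |T_k|\, \tilde{\lambda}^2/\eta_2^2 \;\leq\; \sum_{k=1}^{K} w_k \Vert \hat{A}^{(k)} - A_{0\hat{\pi}}^{(k)}\Vert_F^2 \;\leq\; \tilde{\lambda}^2 |\G_{0\hat{\pi}}^\un|,
\end{align*}
where the last inequality is \eqref{eq:betamin}. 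Thus $\sum_k w_k |T_k| \leq \eta_2^2 |\G_{0\hat{\pi}}^\un|$.

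\textbf{Step 3: Translate into a lower bound on $|\hat{\G}|$.} Writing $|S_k \cap \hat{\G}| = |S_k| - |T_k|$, and applying Condition~\ref{cd7} in the form $\sum_k w_k |\G_{0\hat{\pi}}^{(k)}| \geq |\G_{0\hat{\pi}}^\un|/c_t$, I obtain
\begin{align*}
\sum_{k=1}^{K} w_k |S_k \cap \hat{\G}| \;\geq\; (1-\eta_1) \sum_{k=1}^{K} w_k |\G_{0\hat{\pi}}^{(k)}| - \eta_2^2 |\G_{0\hat{\pi}}^\un| \;\geq\; \frac{1-\eta_1 - \eta_2^2 c_t}{c_t}\, |\G_{0\hat{\pi}}^\un|.
\end{align*}
Finally, since $S_k \cap \hat{\G} \subseteq \hat{\G}$ for every $k$ and $\sum_k w_k = 1$, the left-hand side is dominated by $|\hat{\G}|$, yielding the claim.

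No step appears particularly delicate — the argument is essentially set-theoretic bookkeeping. The only subtlety worth flagging is that the bound $\Vert \hat{A}^{(k)} - A_{0\hat{\pi}}^{(k)}\Vert_F^2 \geq |T_k|\tilde{\lambda}^2/\eta_2^2$ crucially uses the fact that $\hat{\G}$ is the \emph{union} of the recovered graphs, so membership of an edge in $\hat{\G}$ (rather than in the particular $\hat{\G}^{(k)}$) is what forces $[\hat{A}^{(k)}]_{ij} = 0$; this is exactly what lets one couple the individual Frobenius errors to the joint quantity $|\hat{\G}|$.
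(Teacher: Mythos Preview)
Your proof is correct and follows essentially the same approach as the paper: both arguments identify the ``strong'' edges $S_k=\mathcal{N}^{(k)}$, use \eqref{eq:betamin} to bound the weighted count of strong edges that fail to appear in $\hat{\G}$, and then invoke Condition~\ref{cd7} together with $\sum_k w_k=1$ to convert this into the stated lower bound on $|\hat{\G}|$. The only cosmetic difference is that the paper routes the counting through an auxiliary set $\mathcal{M}^{(k)}=\{(i,j):|[\hat{A}^{(k)}]_{ij}-[A_{0\hat{\pi}}^{(k)}]_{ij}|\geq\tilde{\lambda}/\eta_2\}$ rather than directly through $T_k=S_k\setminus\hat{\G}$, but since $T_k\subseteq\mathcal{N}^{(k)}\cap\mathcal{M}^{(k)}$ your version is in fact a slight streamlining of the same argument.
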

	
	\begin{proof}
		Let $\mathcal{N}^{(k)}$ and $\mathcal{M}^{(k)}$ be the sets of entries satisfying
\begin{align*}
\mathcal{N}^{(k)} := \lbrace (i,j) :\, \vert [A_{0\hat{\pi}}^{(k)}]_{i,j} \vert \geq \tilde{\lambda} / \eta_2 \rbrace
\end{align*}		
and 
\begin{align*}
\quad\mathcal{M}^{(k)} := \lbrace (i,j) :\, \vert [\hat{A}^{(k)}]_{i,j} - [A_{0\hat{\pi}}^{(k)}]_{i,j} \vert \geq \tilde{\lambda} / \eta_2 \rbrace.
\end{align*}
From these definitions it follows that
		\begin{align*}
		\sum_{k=1}^K w_k \vert \mathcal{N}^{(k)} \intersection \mathcal{M}^{(k)} \vert \frac{\tilde{\lambda}^2}{\eta_2^2} & \leq
		\sum_{k=1}^K w_k \!\!\! \underset{(i,j) \, \in \, \mathcal{N}^{(k)} \cap \mathcal{M}^{(k)}}{\sum} \vert [\hat{A}^{(k)}]_{i,j} - [A_{0\hat{\pi}}^{(k)}]_{i,j} \vert^2 \\
		&\leq 
		\sum_{k=1}^K w_k \Vert \hat{A}^{(k)} - A_{0\hat{\pi}}^{(k)}\Vert_F^2.
		\end{align*}
		Leveraging inequality~\eqref{eq:betamin} and Condition~\ref{cd7} we further have that
		\begin{align}\label{E:proof_lemma_a7}
		\sum_{k=1}^K w_k \vert \mathcal{N}^{(k)} \cap \mathcal{M}^{(k)} \vert \leq \eta_2^2 \vert \G_{0\hat{\pi}}^\un \vert \leq \eta_2^2 c_t \sum_{k=1}^K w_k \vert \G_{0\hat{\pi}}^{(k)} \vert.
		\end{align}
		Notice that for all $(i,j)$-th entries in the set $\mathcal{N}^{(k)} \cap {\mathcal{M}^{(k)}}^\mathcal{C}$ it must be that $| [\hat{A}^{(k)}]_{i,j} | > 0$.
		Hence, $\mathcal{N}^{(k)} \cap {\mathcal{M}^{(k)}}^\mathcal{C}$ corresponds to a subset of non-zero entries of $\hat{A}^{(k)}$, which in turn corresponds to a subset of edges in $\hat{\G}$.
		From this we can infer that
		\begin{align*}
		\vert \hat{\G} \vert \! & = \! \sum_{k=1}^K \! w_k \vert \hat{\G} \vert \! \geq \!
		\sum_{k=1}^K \! w_k \vert \mathcal{N}^{(k)} \!\cap\! {\mathcal{M}^{(k)}}^\mathcal{C} \vert \!=\! \sum_{k=1}^K w_k (\vert \mathcal{N}^{(k)} \vert - \vert \mathcal{N}^{(k)} \cap \mathcal{M}^{(k)} \vert) \\
		&\quad \geq (1 - \eta_1 - \eta_2^2 c_t) \sum_{k=1}^K w_k \vert \G_{0\hat{\pi}}^{(k)} \vert,
		\end{align*}
		where the last inequality follows by combining \eqref{E:proof_lemma_a7} with the definition of $\eta_1$ in the statement of the lemma. The proof concludes by replacing Condition~\ref{cd7} in the above inequality.
	\end{proof}
	
	\begin{theorem} \label{thm:betamin}
		Assume Conditions~\ref{cd1},~\ref{cd7} and~\ref{cd8} hold, and suppose that there exist constants $\delta_B$ and $0 < \delta_s < 1$ as well as $\lambda$ and $\lambda_0$ that scale as $\lambda^2 \asymp \lambda_0^2 \asymp \frac{\log p}{n} (p / \vert \G_0^\un\vert \vee 1)$ such that
		\begin{align}\label{eq:betamin1}
		\delta_B \sum_{k=1}^K w_k \Vert \hat{A}^{(k)} - A_{0\hat{\pi}}^{(k)} \Vert_F^2 + \lambda^2 \delta_s \vert \hat{\G} \vert \leq \lambda^2 \vert \G_0^\un \vert + \lambda_0^2  \vert \G_{0\hat{\pi}}^\un \vert.
		\end{align}
		If the constant $\eta_0$ in Condition~\ref{cd8} is sufficiently small, then there exist constants $\delta_s', c_g, c_g' > 0$ such that
		\begin{align}\label{E:statement_theo_810}
		\delta_B \sum_{k=1}^K w_k \Vert \hat{A}^{(k)} - A_{0\hat{\pi}}^{(k)}\Vert_F^2 + (\lambda^2 \delta_s - \lambda_0^2 \delta_s') \vert \hat{\G} \vert \leq \lambda^2 \vert \G_0^\un \vert
		\end{align}
		and
		\begin{align}\label{E:statement_theo_820}
		\vert \hat{\G} \vert \geq c_g \vert \G_{0\hat{\pi}}^\un \vert \geq c_g' \vert \G_0^\un \vert.
		\end{align}
	\end{theorem}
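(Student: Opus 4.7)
\textbf{Proof plan for Theorem~\ref{thm:betamin}.} The overall strategy is to use Lemma~\ref{lm:betamin} to convert the term $\lambda_0^2 |\G_{0\hat{\pi}}^\un|$ on the right-hand side of \eqref{eq:betamin1} into a fraction of $\lambda^2 \delta_s |\hat{\G}|$ that can be absorbed on the left-hand side. Concretely, I would first drop the nonnegative term $\lambda^2 \delta_s |\hat{\G}|$ from \eqref{eq:betamin1} to obtain the pure Frobenius bound
\begin{equation*}
\sum_{k=1}^K w_k \Vert \hat{A}^{(k)} - A_{0\hat{\pi}}^{(k)} \Vert_F^2 \;\leq\; \frac{1}{\delta_B}\bigl(\lambda^2 |\G_0^\un| + \lambda_0^2 |\G_{0\hat{\pi}}^\un|\bigr).
\end{equation*}
To bring this into the form required by Lemma~\ref{lm:betamin}, I would show the auxiliary inequality $|\G_0^\un| \leq c_t |\G_{0\hat{\pi}}^\un|$. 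This follows by applying Condition~\ref{cd7} with a permutation $\pi_0$ consistent with the true DAGs (so that $\G_{0\pi_0}^{(k)} = \G_0^{(k)}$ by the minimal-edge I-MAP property of Condition~\ref{cd1}), which gives $|\G_0^\un| \leq c_t \sum_k w_k |\G_0^{(k)}|$; then $|\G_0^{(k)}| \leq |\G_{0\hat{\pi}}^{(k)}|$ by minimality of $\G_0^{(k)}$, and $\sum_k w_k |\G_{0\hat{\pi}}^{(k)}| \leq \max_k |\G_{0\hat{\pi}}^{(k)}| \leq |\G_{0\hat{\pi}}^\un|$ since the weights sum to one.

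Combining these two steps yields $\sum_k w_k \Vert \hat{A}^{(k)} - A_{0\hat{\pi}}^{(k)} \Vert_F^2 \leq \tilde{\lambda}^2 |\G_{0\hat{\pi}}^\un|$ with $\tilde{\lambda}^2 := (c_t \lambda^2 + \lambda_0^2)/\delta_B \asymp \lambda^2$. Next I would verify the beta-min hypothesis of Lemma~\ref{lm:betamin} starting from Condition~\ref{cd8}: the threshold in Condition~\ref{cd8} is $\lambda/\eta_0$ up to constants (by the scaling of $\lambda$), so to ensure $\tilde{\lambda}/\eta_2 \leq \lambda/\eta_0$ I would pick $\eta_2 := \eta_0\,\tilde{\lambda}/\lambda$, which is a fixed constant times $\eta_0$. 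The condition $\eta_2^2 c_t < 1-\eta_1$ of Lemma~\ref{lm:betamin} then reduces to a constraint of the form $\eta_0^2 < \mathrm{const}\cdot(1-\eta_1)/c_t$, which holds for $\eta_0$ sufficiently small (as permitted by the theorem hypothesis and compatible with the bound $\eta_0^2 < (1-\eta_1)/c_t$ already assumed in Condition~\ref{cd8}). Lemma~\ref{lm:betamin} then delivers $|\hat{\G}| \geq c_g |\G_{0\hat{\pi}}^\un|$ with $c_g := (1-\eta_1-\eta_2^2 c_t)/c_t > 0$.

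To finish, I would substitute $|\G_{0\hat{\pi}}^\un| \leq |\hat{\G}|/c_g$ back into \eqref{eq:betamin1}, yielding
\begin{equation*}
\delta_B \sum_{k=1}^K w_k \Vert \hat{A}^{(k)} - A_{0\hat{\pi}}^{(k)}\Vert_F^2 + \bigl(\lambda^2 \delta_s - \lambda_0^2/c_g\bigr) |\hat{\G}| \;\leq\; \lambda^2 |\G_0^\un|,
\end{equation*}
which is \eqref{E:statement_theo_810} with $\delta_s' := 1/c_g$. Finally, chaining $|\hat{\G}| \geq c_g |\G_{0\hat{\pi}}^\un|$ with the auxiliary inequality $|\G_{0\hat{\pi}}^\un| \geq |\G_0^\un|/c_t$ established earlier gives \eqref{E:statement_theo_820} with $c_g' := c_g/c_t$. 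The main obstacle will be the constant bookkeeping in the second paragraph: one must select $\eta_2$ so that both Lemma~\ref{lm:betamin}'s constraint $\eta_2^2 c_t < 1-\eta_1$ and the compatibility $\tilde{\lambda}/\eta_2 \leq \lambda/\eta_0$ can be simultaneously satisfied, and exhibiting this range is exactly the place where the "sufficiently small $\eta_0$" hypothesis is used. Everything else is essentially algebraic substitution.
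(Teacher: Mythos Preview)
Your proposal is correct and follows essentially the same route as the paper's proof: both arguments (i) establish $|\G_0^{\un}| \leq c_t\,|\G_{0\hat{\pi}}^{\un}|$ via Conditions~\ref{cd1} and~\ref{cd7}, (ii) drop the $|\hat{\G}|$ term from \eqref{eq:betamin1} to set up the hypothesis of Lemma~\ref{lm:betamin} with $\tilde{\lambda}^2 = (c_t\lambda^2 + \lambda_0^2)/\delta_B$, (iii) choose $\eta_2$ proportional to $\eta_0$ so that Condition~\ref{cd8} matches the lemma's beta-min hypothesis, and (iv) substitute $|\G_{0\hat{\pi}}^{\un}| \leq |\hat{\G}|/c_g$ back into \eqref{eq:betamin1} to obtain \eqref{E:statement_theo_810} with $\delta_s' = 1/c_g$. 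The only cosmetic difference is that the paper defines $\eta_2$ directly against the threshold $\sqrt{(\log p)/n}\,(\sqrt{p/|\G_0^{\un}|}\vee 1)$ rather than against $\lambda$, which spares the ``up to constants'' caveat you flagged; your handling of this via the sufficiently-small-$\eta_0$ clause is fine. Your value $c_g' = c_g/c_t$ is the one consistent with the inequality $|\G_{0\hat{\pi}}^{\un}| \geq |\G_0^{\un}|/c_t$ you derived.
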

	
	\begin{proof}
	Using Conditions~\ref{cd1} and~\ref{cd7}, we have that $| \G_0^\un | \leq c_t \, \underset{k}{\max}\; | \G_0^{(k)} | \leq c_t \, \vert \G_{0\hat{\pi}}^\un \vert$.  
		Suppose that for some $\tilde{\lambda} > 0$ one has that $\tilde{\lambda}^2 \asymp \lambda^2 \asymp \lambda_0^2$ and $\tilde{\lambda}^2 \delta_B \geq \lambda^2 c_t + \lambda_0^2$, then it follows from~\eqref{eq:betamin1} that
		\begin{align*}
		\delta_B \sum_{k=1}^K w_k \Vert \hat{A}^{(k)} - A_{0\hat{\pi}}^{(k)} \Vert_F^2 \leq \lambda^2 \vert \G_0^\un \vert + \lambda_0^2 \vert \G_{0\hat{\pi}}^\un \vert \leq \tilde{\lambda}^2 \delta_B \vert \G_{0\hat{\pi}}^\un \vert.
		\end{align*}
		Let $\eta_2$ be a constant defined as $\eta_2 := \eta_0 \tilde{\lambda} / \sqrt{\frac{\log p}{n} (p / \vert \G_0^\un\vert \vee 1)}$, then we can rewrite Condition~\ref{cd8} as
		$$\sum_{i,j} \mathbf{1} \left\{ \left| [A_{0\hat{\pi}}^{(k)}]_{i,j} \right| \geq \tilde{\lambda} / \eta_2 \right\} \geq (1 - \eta_1) \vert \G_{0\hat{\pi}}^{(k)}\vert.$$
		Moreover, for $\eta_0$ sufficiently small, $\eta_2$ is also guaranteed to satisfy $0 < \eta_2^2 c_t < 1 - \eta_1$.
		We could therefore apply Lemma~\ref{lm:betamin} and get that $\vert \hat{\G} \vert \geq \frac{1 - \eta_1 - \eta_2^2 c_t}{c_t} \vert \G_{0\hat{\pi}}^\un \vert$, which completes the proof of~\eqref{E:statement_theo_820} by choosing $c_g = \frac{1 - \eta_1 - \eta_2^2 c_t}{c_t}$ and $c_g' = c_g \cdot c_t$.
		Notice that in order to apply Lemma~\ref{lm:betamin}, it is required that $\eta_2^2 c_t < 1 - \eta_1$, which is guaranteed by the assumption that $\eta_0$ is sufficiently small. 
		Leveraging the first inequality in \eqref{E:statement_theo_820}, we can replace $\lambda_0^2 \vert \G_{0\hat{\pi}}^\un \vert$ in~\eqref{eq:betamin1} by $\frac{\lambda_0^2 c_t}{1 - \eta_1 - \eta_2^2 c_t} \vert \hat{\G} \vert$ in order to obtain
		\begin{align}\label{E:proof_theo_810}
		\delta_B \sum_{k=1}^K w_k \Vert \hat{A}^{(k)} - A_{0\hat{\pi}}^{(k)}\Vert_F^2 + \left(
		\lambda^2 \delta_s - \frac{\lambda_0^2 c_t}{1 - \eta_1 - \eta_2^2 c_t} \right) \vert \hat{\G} \vert\leq \lambda^2 \vert \G_0^\un \vert.
		\end{align}
		Notice that \eqref{E:proof_theo_810} coincides with the sought expression \eqref{E:statement_theo_810} upon substituting $\delta'_s = c_t /(1-\eta_1 - c_t\eta_2^2)$.
	\end{proof}
	
	\subsubsection{Proof of Theorem~\ref{thm:l0}} \label{sec:proof_mainthm}
	
	It follows from Theorem~\ref{thm:lambda1}, Theorem~\ref{thm:lambda2}, and Lemma~\ref{lm:lambda03} that there exist constants $\lambda_1, \lambda_2$ with $\lambda_1^2 \asymp \lambda_2^2 \asymp \frac{\log p}{n}$ as well as some ${\lambda_3^{(k)}}^2 \asymp \frac{\log p}{n_k}$ for all $k$ such that with probability $1 - \exp(-c \log p)$ for some constant $c > 0$, the random event $\Ev_1 \cap \Ev_2 \cap \Ev_3$ occurs.
	
	We may then apply Lemma~\ref{lem:bounds} to show that there exist constants $\delta_B, \delta_W$ such that with high probability, for any $\lambda > 0$ satisfying $\lambda^2 > {\lambda_1^2}/{\delta_1} + {\lambda_2^2}/{\delta_2}$, it holds that
	\begin{align}\label{E:proof_theo_main_100}
	& \delta_B \beta^2 \sum_{j=1}^p \sum_{k=1}^K w_k \Vert X^{(k)} (\hat{a}_j^{(k)} - a_{0j\hat{\pi}}^{(k)}) \Vert_2^2 / n_k \!+\! \delta_W \sum_{k=1}^K w_k \Vert \hat{\Omega}^{(k)} - \Omega_{0\hat{\pi}}^{(k)} \Vert_F^2 + \lambda^2 \delta_s \vert \hat{\G} \vert  \nonumber
	\\
	& \hspace{2cm} \leq \lambda^2 \vert \G_0^\un \vert + \frac{\lambda_2^2 (p + \vert \G_{0\hat{\pi}}^\un \vert)}{\delta_2} + \frac{\lambda_1^2 \vert \G_{0\hat{\pi}}^\un \vert}{\delta_1}.
	\end{align}
	Note that compared with Lemma~\ref{lem:bounds}, we have replaced $\sum_{j=1}^p \left(\frac{\hat{\omega}_j^{(k)} - {\omega_0}_j^{(k)}}{\hat{\omega}_j^{(k)}} \right)^2$ by $\Vert \hat{\Omega}^{(k)} - \Omega_{0\hat{\pi}}^{(k)} \Vert_F^2$. This follows from combining the facts that $\sum_{j=1}^p (\hat{\omega}_j^{(k)} - {\omega_0}_j^{(k)})^2$ is equal to $\Vert \hat{\Omega}^{(k)} - \Omega_{0\hat{\pi}}^{(k)} \Vert_F^2$ and that $\frac{1}{\hat{\omega}_j^{(k)}}$ is bigger than $1/\beta^2$ on the random event $\Ev_3$. 
	In addition, the constants $\beta, \sigma_0$, $\delta_1$ and $\delta_2$ in Lemma~\ref{lem:bounds} have been absorbed into the new constants $\delta_B$ and $\delta_W$. 
	We also replaced $\lambda^2 - \frac{\lambda_1^2}{\delta_1} - \frac{\lambda_2^2}{\delta_2}$ by $\lambda^2 \delta_s$ for some $0 < \delta_s < 1$. 
	
	By applying Lemma~\ref{lm:lambda03} [cf.~\eqref{eq:frobenius}] we may bound the first summand on the left-hand side of \eqref{E:proof_theo_main_100} by $\delta_B \sum_{k=1}^K w_k \Vert \hat{A}^{(k)} - A_{0\hat{\pi}}^{(k)} \Vert_F^2$. Furthermore, replacing $\lambda_1$ and $\lambda_2$ by some $\lambda_0$ that scales as $\lambda_0^2 \asymp \frac{\log p}{n} (p / \vert \G_0^\un\vert \vee 1)$, we obtain that
	\begin{align} \label{eq:bound}
	\delta_B \sum_{k=1}^K w_k \Vert \hat{A}^{(k)} - A_{0\hat{\pi}}^{(k)} \Vert_F^2 + & \delta_W \sum_{k=1}^K w_k \Vert \hat{\Omega}^{(k)} - \Omega_{0\hat{\pi}}^{(k)} \Vert_F^2 + \lambda^2 \delta_s \vert \hat{\G} \vert \nonumber \\
	& \leq \lambda^2 \vert \G_0^\un \vert + \lambda_0^2 \vert \G_{0\hat{\pi}}^\un \vert.
	\end{align}
	By applying~\eqref{eq:bound}, we have that for a constant $\eta_0$ small enough, \eqref{E:statement_theo_810} and~\eqref{E:statement_theo_820} in Theorem~\ref{thm:betamin} hold by choosing $\lambda$ such that $\lambda^2 \asymp \frac{\log p}{n} (p / \vert \G_0^\un\vert \vee 1)$ and $\lambda^2 \delta_s > \lambda_0^2 \delta_s'$. Moreover, from \eqref{E:statement_theo_810} we further infer that 
	\begin{align} \label{eq:bound2}
	\delta_B \sum_{k=1}^K w_k \Vert \hat{A}^{(k)} - A_{0\hat{\pi}}^{(k)}\Vert_F^2 + \lambda^2 \delta_s'' \vert \hat{\G} \vert \leq \lambda^2 \vert \G_0^\un \vert,
	\end{align}
	where the constant $\delta_s''$ is chosen such that $\lambda^2 \delta_s'' = \lambda^2 \delta_s - \lambda_0^2 \delta_s'$ in \eqref{E:statement_theo_810}.
	From \eqref{eq:bound2} it can thus be inferred that $\vert \hat{\G}\vert \leq \vert \G_0^\un \vert/\delta_s''$. Combining this with \eqref{E:statement_theo_820} and the fact that $\vert \G_{0\hat{\pi}}^\un \vert \geq c_g' / c_g \vert \G_0^\un \vert$, we recover the first part of \eqref{E:edges_main_theorem} in the statement of the theorem, i.e., $|\hat{\G}| \asymp |\G_{0\hat{\pi}}^\un|$. For the relation between $\vert \G_{0\hat{\pi}}^\un \vert$ and $\vert \G_0^\un \vert$, we use that $| \G_{0\hat{\pi}}^\un | \leq |\hat{\G}| / c_g \leq \vert \G_0^\un \vert/ (\delta_s'' \cdot c_g)$ and $\vert \G_{0\hat{\pi}}^\un \vert \geq c_g' / c_g \vert \G_0^\un \vert$.
	Finally, to recover \eqref{E:convergence_main_theorem} we combine \eqref{eq:bound} with \eqref{E:edges_main_theorem}, which concludes the proof. \hfill\qed
	
	\subsection{Proof of Theorem~\ref{thm:relax}} \label{sec:relaxproof}
	
	We first introduce a lemma that will be instrumental in proving Theorem~\ref{thm:relax} and that can be obtained directly from Lemmas 7.2 and 7.3 in \cite{vandegeer2013}.
	
	\begin{lemma}
		\cite[Lemmas 7.2 and 7.3]{vandegeer2013}
		\label{lm:oribetamin}
		Suppose for some $\delta_B, \delta_s, \lambda_0, \lambda > 0$ one has that $\delta_B \Vert \hat{A}^{(k)} - A_{0\hat{\pi}}^{(k)} \Vert_F^2 + \lambda^2 \delta_s \vert \hat{\G}^{(k)} \vert \leq \lambda^2 \vert \G_0^{(k)} \vert + \lambda_0^2 \vert \G_{0\hat{\pi}}^{(k)} \vert$. Let $\tilde{\lambda}^2 \delta_B \geq \lambda^2 + \lambda_0^2$ and assume that $\sum_{i,j} \mathbf{1} \left\{ \left| [A_{0\hat{\pi}}^{(k)}]_{i,j} \right| \geq \tilde{\lambda} / \eta_2 \right\} \geq (1 - \eta_1) \vert \G_{0\hat{\pi}}^{(k)}\vert$.
		Then
		\begin{align*}
		\delta_B \Vert \hat{A}^{(k)} - A_{0\hat{\pi}}^{(k)} \Vert_F^2 + \left( \lambda^2 \delta_s - \frac{\lambda_0^2}{1 - \eta_1 - \eta_2^2}\right) \vert \hat{\G}^{(k)} \vert \leq \lambda^2 \vert \G_0^{(k)} \vert
		\end{align*}
		and
		\begin{align*}
		\vert \hat{\G}^{(k)} \vert \geq (1 - \eta_1 - \eta_2^2) \vert \G_{0\hat{\pi}}^{(k)} \vert \geq (1 - \eta_1 - \eta_2^2) \vert \G_0^{(k)} \vert.
		\end{align*}
	\end{lemma}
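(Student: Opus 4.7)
The plan is to mimic the argument used in Lemma~\ref{lm:betamin}, but specialized to a single graph $k$: the role played by Condition~\ref{cd7} in the union-version is replaced by the elementary fact that $|\G_0^{(k)}| \leq |\G_{0\hat{\pi}}^{(k)}|$, which follows directly from Condition~\ref{cd1} since $\G_0^{(k)}$ is a minimal-edge I-MAP and $\G_{0\hat\pi}^{(k)}$ is some (possibly non-minimal) I-MAP consistent with $\hat\pi$. Combining this observation with the hypothesis $\tilde\lambda^2 \delta_B \geq \lambda^2 + \lambda_0^2$ yields the a~priori Frobenius bound
\begin{align*}
\delta_B \Vert \hat{A}^{(k)} - A_{0\hat\pi}^{(k)} \Vert_F^2 \leq \lambda^2 |\G_0^{(k)}| + \lambda_0^2 |\G_{0\hat\pi}^{(k)}| \leq (\lambda^2 + \lambda_0^2)|\G_{0\hat\pi}^{(k)}| \leq \tilde\lambda^2 \delta_B |\G_{0\hat\pi}^{(k)}|,
\end{align*}
that is $\Vert \hat{A}^{(k)} - A_{0\hat\pi}^{(k)} \Vert_F^2 \leq \tilde\lambda^2 |\G_{0\hat\pi}^{(k)}|$, which is the single-graph analogue of the bound appearing at the start of the proof of Lemma~\ref{lm:betamin}.

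Next, I would introduce the two index sets $\N := \{(i,j): |[A_{0\hat\pi}^{(k)}]_{i,j}| \geq \tilde\lambda/\eta_2 \}$ and $\M := \{(i,j): |[\hat A^{(k)}]_{i,j} - [A_{0\hat\pi}^{(k)}]_{i,j}| \geq \tilde\lambda/\eta_2 \}$. Counting the Frobenius mass concentrated on $\N \cap \M$ gives $|\N \cap \M| \tilde\lambda^2/\eta_2^2 \leq \Vert \hat{A}^{(k)} - A_{0\hat\pi}^{(k)} \Vert_F^2$, hence $|\N \cap \M| \leq \eta_2^2 |\G_{0\hat\pi}^{(k)}|$. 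On the complement $\N \cap \M^c$ the triangle inequality forces $|[\hat A^{(k)}]_{i,j}| > 0$, so these entries are edges of $\hat \G^{(k)}$. Using the beta-min-type hypothesis $|\N| \geq (1-\eta_1)|\G_{0\hat\pi}^{(k)}|$ and subtracting then yields
\begin{align*}
|\hat\G^{(k)}| \geq |\N \cap \M^c| = |\N| - |\N \cap \M| \geq (1-\eta_1-\eta_2^2)|\G_{0\hat\pi}^{(k)}| \geq (1-\eta_1-\eta_2^2)|\G_0^{(k)}|,
\end{align*}
where the last step again uses $|\G_0^{(k)}| \leq |\G_{0\hat\pi}^{(k)}|$. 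This is the second conclusion.

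Finally, I would plug the just-proved lower bound $|\G_{0\hat\pi}^{(k)}| \leq |\hat \G^{(k)}|/(1-\eta_1-\eta_2^2)$ back into the original hypothesis, replacing the $\lambda_0^2 |\G_{0\hat\pi}^{(k)}|$ term on the right-hand side, and rearranging to absorb the resulting $\lambda_0^2/(1-\eta_1-\eta_2^2) \cdot |\hat \G^{(k)}|$ into the $\lambda^2 \delta_s |\hat \G^{(k)}|$ term on the left; this produces the first stated inequality. The only step that takes a little care is verifying that entries in $\N \cap \M^c$ actually contribute to $\hat \G^{(k)}$: on $\N$ we have $|[A_{0\hat\pi}^{(k)}]_{i,j}| \geq \tilde\lambda/\eta_2$, while on $\M^c$ we have $|[\hat A^{(k)}]_{i,j} - [A_{0\hat\pi}^{(k)}]_{i,j}| < \tilde\lambda/\eta_2$, so the reverse triangle inequality strictly separates $[\hat A^{(k)}]_{i,j}$ from zero; all other manipulations are purely algebraic.
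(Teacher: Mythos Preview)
Your proposal is correct and is precisely the single-graph specialization of the argument the paper gives for Lemma~\ref{lm:betamin}; the paper itself does not prove Lemma~\ref{lm:oribetamin} but simply cites \cite[Lemmas~7.2 and~7.3]{vandegeer2013}, and your write-up recovers that argument exactly, with the role of Condition~\ref{cd7} replaced (as you note) by the trivial identity and by $|\G_0^{(k)}|\le|\G_{0\hat\pi}^{(k)}|$ from the minimal-edge I-MAP property.
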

	
	In order to show Theorem~\ref{thm:relax}, we begin the proof just like for Theorem~\ref{thm:l0} in Section~\ref{sec:proof_mainthm} until we get to expression \eqref{eq:bound}.
	It then follows from Condition~\ref{cd7}' and $|\hat{\G}| \geq \sum_{k=1}^K w_k|\hat{\G}^{(k)}|$ that there exists some $\lambda_0'^2 \asymp C_{\max} \frac{\log p}{n} (p / \vert \G_0^\un\vert \vee 1)$, where $C_{\max}$ is defined in Condition~\ref{cd8}, such that for any $\lambda >0$,
	\begin{align*}
	\begin{split}
	& \delta_B \sum_{k=1}^K w_k \Vert \hat{A}^{(k)} - A_{0\hat{\pi}}^{(k)} \Vert_F^2 + \delta_W \sum_{k=1}^K w_k \Vert \hat{\Omega}^{(k)} - \Omega_{0\hat{\pi}}^{(k)} \Vert_F^2 + \lambda^2 \delta_s \sum_{k=1}^K w_k \vert \hat{\G}^{(k)} \vert \\
	& \quad \leq \lambda^2 c_t(\pi_0) \sum_{k=1}^K w_k \vert \G_0^{(k)} \vert + \lambda_0'^2 \sum_{k=1}^K w_k \vert \G_{0\hat{\pi}}^{(k)}\vert.
	\end{split}
	\end{align*}
	Let $\lambda'^2 := \lambda^2 \cdot c_t(\pi_0)$ and $\delta_s' := \delta_s / c_t(\pi_0)$, it then follows that there must exist at least one $k$ such that for any $\lambda' >0$,
	\begin{align*} 
	\delta_B \Vert \hat{A}^{(k)} - A_{0\hat{\pi}}^{(k)} \Vert_F^2 + \delta_W \Vert \hat{\Omega}^{(k)} - \Omega_{0\hat{\pi}}^{(k)} \Vert_F^2 + \lambda'^2 \delta_s' \vert \hat{\G}^{(k)} \vert \leq \lambda'^2 \vert \G_0^{(k)} \vert + \lambda_0'^2 \vert \G_{0\hat{\pi}}^{(k)}\vert
	\end{align*}
	Since according to Condition~\ref{cd7}', $c_t(\pi)$ scales as a constant for permutations consistent with $\G_0^\un$,  we have that $\delta_s'$ is still a constant and $\lambda' \asymp \lambda$. In this case, it follows from Lemma~\ref{lm:oribetamin} and Condition~\ref{cd8}' that there exists some constant $0 < \delta_s < 1$ and $\delta_s' > 0$ such that by choosing $\lambda'$ such that $\lambda' \asymp C_{\max} \frac{\log p}{n} (p / \vert \G_0^\un\vert \vee 1)$ and $\lambda'^2 \delta_s > \lambda_0'^2 \delta_s'$, it holds that
	\begin{align*}
	\delta_B \Vert \hat{A}^{(k)} - A_{0\hat{\pi}}^{(k)}\Vert_F^2 + \left(
	\lambda'^2 \delta_s - \lambda_0'^2 \delta_s' \right) \vert \hat{\G}^{(k)} \vert \leq \lambda'^2 \vert \G_0^{(k)} \vert.
	\end{align*}
	It also follows from Lemma~\ref{lm:oribetamin} that $\vert \hat{\G}^{(k)} \vert \geq c_g \vert \G_{0\hat{\pi}}^{(k)} \vert \geq c_g \vert \G_0^{(k)} \vert$ for some positive constant $c_g$.
	Mimicking the arguments employed in the proof of Theorem~\ref{thm:l0} from \eqref{eq:bound2} until the end of the proof, one can show that expressions \eqref{E:convergence_main_theorem_relaxed} and \eqref{E:edges_main_theorem_relaxed} in the statement of Theorem~\ref{thm:relax} hold true, which completes the proof. \hfill\qed

\subsection{Proof of Corollary~\ref{cor:intl0}} \label{sec:corproof}

The following lemma is instrumental in proving the corollary.

\begin{lemma}\label{lem:cor1} \cite[Lemma 4]{hauser2015jointly} Given fixed $\G$, the maximum likelihood estimator in~\eqref{obj:jointl0_intervened} can be written as
	\begin{align*}
	p + \sum_{j=1}^p \Bigg( \underset{a \in \R^{\vert{\pa}_j(\G) \vert}}{\min} \frac{n_{-j}}{n} \log \Bigg( \sum_{k : j \not\in I_k} \frac{n_k}{ n_{-j}}\Vert \hat{X}_j^{(k)} - \hat{X}^{(k)}_{{\pa}_{j}(\G)} \cdot a \Vert_2^2 / n_k \Bigg) \\
	+ \sum_{k : j \in I_k} w_k \log \left( \Vert \hat{X}_j^{(k)} \Vert_2^2 / n_k \right)\Bigg)
	\end{align*}
	where $n_{-j}$ is the total number of samples where node $j$ is not intervened on, i.e., $n_{-j} = \sum_{k : j \not\in I_k} n_k$.
\end{lemma}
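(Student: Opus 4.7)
Plan: Because $\G$ is fixed, the support of $A$ is prescribed and the penalty $\lambda^2 \|A\|_0$ in \eqref{obj:jointl0_intervened_a} is constant, so it suffices to profile the weighted log-likelihood $\sum_k w_k \ell_{n_k}(\hat{X}^{I_k};A^{I_k},\Omega^{I_k})$ over the remaining parameters subject to \eqref{obj:jointl0_intervened_c}--\eqref{obj:jointl0_intervened_d}, and then exhibit this profiled value as the displayed expression (with the convention that the lemma records the quantity to be \emph{minimized}, i.e. $-\sum_k w_k \ell_{n_k}$ up to the constant $\lambda^2 \|A\|_0$).

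First I would expand each $\ell_{n_k}$ via \eqref{eq:likelihood}, denoting by $a_j^{I_k}$ the $j$-th column of $A^{I_k}$. Using that $\det(I - A^{I_k}) = 1$ (since $A^{I_k}$ is triangular after reordering by $\pi$) and the columnwise identity $(I-A^{I_k})(\Omega^{I_k})^{-1}(I-A^{I_k})^T = \sum_j (\Omega^{I_k}_{jj})^{-1}(e_j - a_j^{I_k})(e_j - a_j^{I_k})^T$, one obtains the standard per-node decomposition
\begin{equation*}
-\ell_{n_k} \;=\; \sum_{j=1}^{p}\left[\,\frac{\|\hat{X}_j^{(k)}-\hat{X}^{(k)} a_j^{I_k}\|_2^2/n_k}{\Omega^{I_k}_{jj}} \;+\; \log \Omega^{I_k}_{jj}\,\right].
\end{equation*}
After swapping the order of summation, for each $j$ I partition the inner sum over $k$ into the disjoint blocks $\{k : j \in I_k\}$ and $\{k : j \notin I_k\}$ and substitute the constraints \eqref{obj:jointl0_intervened_c}--\eqref{obj:jointl0_intervened_d}.

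On $\{k : j \in I_k\}$, \eqref{obj:jointl0_intervened_c} forces $a_j^{I_k}=0$ while each $\Omega^{I_k}_{jj}$ is an unshared free parameter by \eqref{obj:jointl0_intervened_d}; each such term therefore decouples and is profiled by $\Omega^{I_k}_{jj}=\|\hat{X}^{(k)}_j\|_2^2/n_k$, contributing $w_k\bigl(1 + \log(\|\hat{X}^{(k)}_j\|_2^2/n_k)\bigr)$ via the elementary identity $\min_{\omega>0}(c/\omega+\log\omega) = 1+\log c$. On $\{k : j\notin I_k\}$, the constraints instead force $a_j^{I_k}=a$ with $\mathrm{supp}(a)\subseteq \pa_j(\G)$ and $\Omega^{I_k}_{jj}=\Omega_{jj}$, a variance shared across the entire block. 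Collecting these shared-variance contributions and using $w_k/n_k = 1/n$, their sum takes the form
\begin{equation*}
\frac{n_{-j}}{n}\left[\,\frac{S_j(a)}{\Omega_{jj}} \,+\, \log\Omega_{jj}\,\right], \qquad S_j(a) \,:=\, \sum_{k:\,j\notin I_k} \frac{n_k}{n_{-j}} \,\frac{\|\hat{X}_j^{(k)} - \hat{X}^{(k)}_{\pa_j(\G)} a\|_2^2}{n_k}.
\end{equation*}
Profiling out the shared variance by setting $\Omega_{jj} = S_j(a)$ leaves $\tfrac{n_{-j}}{n}\bigl(1+\log S_j(a)\bigr)$, with the residual optimization over $a$ surviving as the $\min_{a\in\R^{|\pa_j(\G)|}}$ of the lemma.

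Finally, summing over $j$ and using the normalization $n_{-j}/n + \sum_{k:\,j\in I_k} w_k = \sum_{k=1}^K w_k = 1$, the constant ``$1+\,\cdot$'' terms collapse to $\sum_{j=1}^p 1 = p$, and the remaining terms assemble into exactly the stated expression. The only piece of nontrivial bookkeeping is recognising that the weight $n_k/n_{-j}$ inside the logarithm arises from factoring out the common $n_{-j}/n$ when the shared $\Omega_{jj}$ is profiled simultaneously across the entire non-intervened block; everything else is routine profile-likelihood calculus, and the arithmetic $w_k/n_k = 1/n$ makes the normalizations line up automatically.
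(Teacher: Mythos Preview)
Your proposal is correct. The paper does not actually prove this lemma; it is imported wholesale from \cite{hauser2015jointly} (as the citation in the lemma header indicates) and used as a black box in the proof of Corollary~\ref{cor:intl0}. Your profile-likelihood derivation---decomposing $-\ell_{n_k}$ node-by-node, splitting the sum over $k$ according to whether $j\in I_k$, profiling the free variances $\Omega^{I_k}_{jj}$ on the intervened block and the shared variance $\Omega_{jj}$ on the non-intervened block, and collecting the constants via $n_{-j}/n + \sum_{k:j\in I_k} w_k = 1$---is exactly the argument one would expect and matches the result in \cite{hauser2015jointly}.
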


Recall that in the interventional setting $\G_0^\un$ is given by the true graph $\G_0$ of the non-intervened model, and that the $K$ models $(A_0^{(k)}, \Omega_0^{(k)})$ to be inferred correspond to the interventional models $(A_0^{I_k}, \Omega_0^{I_k})$. Denoting by $(\hat{\pi}, \hat{A}, \hat{\Omega})$ the (non-intervened) global optimum of \eqref{obj:jointl0_intervened}, let $\hat{\omega}_j^{(k)}$ denote the empirical variance of the random variable $X_j^{(k)} - X^{(k)} \hat{a}_j$ if $j \in I_k$ and the empirical variance of $X_j^{(k)}$ otherwise. It follows from Lemma~\ref{lem:cor1} that the global optimum satisfies
		\begin{align*}
		&p + \sum_{j=1}^p \left( \frac{n_{-j}}{n} \log \left( \sum_{k : j \not\in I_k} \frac{n_k}{ n_{-j}} \hat{\omega}_j^{(k)} \right) + \sum_{k : j \in I_k} w_k \log \hat{\omega}_j^{(k)} \right) + \lambda^2 \vert \hat{\G} \vert \\
		&\hspace{2cm}\leq \sum_{j=1}^p \sum_{k=1}^K w_k \frac{\Vert \hat{\epsilon}_{j\hat{\pi}}^{(k)} \Vert_2^2 / n_k}{\omega_{0j\hat{\pi}}^{(k)}} + \sum_{j=1}^p \sum_{k=1}^K w_k \log \omega_{0j\hat{\pi}}^{(k)} + \lambda^2 \vert \G_0^\un \vert.
		\end{align*}
		
Then, applying the inequality $\log (\sum_{k=1}^K w_k a_k) \geq \sum_{k=1}^K w_k  \log a_k$ for any choices of $a_1, \ldots, a_K > 0$ and $w_1, \dots,  w_K > 0$ with $\sum_{k=1}^K w_k = 1$, we obtain
		\begin{align*}
		p + \sum_{j=1}^p \sum_{k=1}^K w_k \log \hat{\omega}_j^{(k)} + \lambda^2 \vert \hat{\G} \vert
		\leq & \sum_{j=1}^p \sum_{k=1}^K w_k \frac{\Vert \hat{\epsilon}_{j\hat{\pi}}^{(k)} \Vert_2^2 / n_k}{\omega_{0j\hat{\pi}}^{(k)}} \\
		& + \sum_{j=1}^p \sum_{k=1}^K w_k \log \omega_{0j\hat{\pi}}^{(k)} + \lambda^2 \vert \G_0^\un \vert.
		\end{align*}
Hence Corollary~\ref{cor:intl0} directly follows from the proof of Theorem~\ref{thm:l0}. \hfill\qed

\end{appendix}

\bibliographystyle{plain}

\bibliography{main}

\end{document}